\numberwithin{equation}{section}
\newtheorem{theorem}{Theorem}[section]
\newtheorem{lemma}[theorem]{Lemma}
\newtheorem{corollary}[theorem]{Corollary}
\newtheorem{proposition}[theorem]{Proposition}
\newtheorem{problem}[theorem]{Problem}
\theoremstyle{definition}
\newtheorem{definition}[theorem]{Definition}
\newtheorem{remark}[theorem]{Remark}
\theoremstyle{remark}
\begin{document}
\title{Supersonic  Euler-Poisson flows with nonzero vorticity  in convergent nozzles  }
\author{Yuanyuan Xing\thanks{School of Mathematics and Statistics,
                Northeastern University at Qinhuangdao, Qinhuangdao,  Hebei Province, 066004,  China. Email: xingyuanyuan@neuq.edu.cn }\and
Zihao Zhang\thanks{School of Mathematics, Jilin University, Changchun, Jilin Province, 130012, China. Email: zhangzihao@jlu.edu.cn}}
\date{}
\newtheorem{pro}{Problem}[section]
 \newtheorem{thm}{Theorem}[section]
 \newtheorem{cor}[thm]{Corollary}
 \newtheorem{lem}[theorem]{Lemma}
 \newtheorem{prop}[thm]{Proposition}
 \newtheorem{defn}{Definition}[section]
  \theoremstyle{remark}
 \newtheorem{rem}{\bf{Remark}}[section]
 \numberwithin{equation}{section}

\def\be{\begin{equation}}
\def\ee{\end{equation}}
\def\beq{\begin{equation*}}
\def\eeq{\end{equation*}}
\def\bc{\begin{cases}}
\def\ec{\end{cases}}
\def\bs{\begin{split}}
\def\es{\end{split}}
\def\oT{\overline{T}}
\def\hT{\hat{T}}
\def\d{{\rm div}}
\def\R{\mathbb{R}^2}
\def\c{\cdot}
\def\n{\nabla}
\def\i{\infty}
\def\D{\mathcal{D}(u)}
\def\A{\mathcal{P}}
\def\p{\partial}

\renewcommand\figurename{\scriptsize Fig}
\pagestyle{myheadings} \markboth{ Supersonic Euler-Poisson flows  with
nonzero  vorticity }{ Supersonic Euler-Poisson flows with
nonzero  vorticity }\maketitle
\begin{abstract}
This paper concerns  supersonic flows with nonzero vorticity governed by the steady Euler-Poisson system, under the coupled effects of the electric potential and the geometry of a convergent nozzle. By the coordinate rotation, the existence of  radially symmetric supersonic flows is proved. We then establish the structural stability of   these background  supersonic flows  under multi-dimensional perturbations of the boundary conditions. One of the crucial   ingredients of
the analysis  is the reformulation of the steady  Euler-Poisson system into a deformation-curl-Poisson system and several transport equations via the deformation-curl-Poisson decomposition. Another  one is to obtain  the well-posedness of the boundary value problem for the associated linearized hyperbolic-elliptic coupled system, which is achieved through a delicate choice of multiplier to derive a priori estimates. The result indicates that the electric field force in compressible flows can counteract the  geometric effects of the convergent nozzle and thereby stabilize key  physical features of the flow.
\end{abstract}
\begin{center}
\begin{minipage}{5.5in}
Mathematics Subject Classifications 2020: 35G60, 35J66, 35M32, 35Q35, 76J20.\\
Key words:  supersonic Euler-Poisson flows, vorticity, hyperbolic-elliptic system,    stability.
\end{minipage}
\end{center}
\section{Introduction}\noindent
\par Flow motion governed by a self-generated electric field, such as electron transport in submicron semiconductor devices \cite{Markowich}, ion transport through channel proteins \cite{Chen}, and other related physical phenomena, can be modeled by the Euler-Poisson system
\begin{equation}\label{ep}
\begin{cases}
\rho_t+\mathrm{div}(\rho \mathbf{u})=0,\\
(\rho\mathbf{u})_t+\mathrm{div}(\rho \mathbf{u}\otimes\mathbf{u}+P\mathbf{I})=\rho \nabla\Phi,\\
(\rho\mathcal{E})_t+\mathrm{div}(\rho\mathcal{E}\mathbf{u}+P\mathbf{u})=\rho\mathbf{u}\cdot \nabla\Phi,\\
\Delta\Phi=\rho-b({\bf x}).
\end{cases}
\end{equation}
In the hydrodynamical model of semiconductor devices or plasmas,  $\mathbf{u}$, $\rho$, $P$ and $\mathcal{E}$ represent the macroscopic particle velocity, electron density, pressure and total energy, respectively. The electric potential $\Phi$ is generally generated by the Coulomb force of particles.  $b>0$ denotes the density of fixed, positively charged background ions. $\mathbf{I}$ is an $n\times n$ identity matrix. The system \eqref{ep} is closed  with the aid of definition of specific total energy and the equation of
state
\begin{equation*}
\mathcal{E}=\frac{\vert\mathbf{u}\vert^2}{2}+\frac{P}{(\gamma-1)\rho},\quad P=P(\rho,S)=\mathrm{e}^S\rho^{\gamma},
\end{equation*}
provided $\gamma>1$ is the adiabatic exponent and $S$ is the entropy. In this paper, we focus on the two-dimensional steady Euler-Poisson system in a convergent nozzle
\begin{equation*}
\tilde\Omega=\Big\{{\bf x}=(x_1,x_2):0<r_1<\sqrt{x_1^2+x_2^2}<r_2<+\infty, -\theta_0<\arctan\frac{x_2}{x_1}<\theta_0\Big\}
\end{equation*}
as follows:
\begin{equation}\label{ep1}
\begin{cases}
(\rho u)_{x_1}+(\rho v)_{x_2}=0,\\
(\rho u^2)_{x_1}+(\rho uv)_{x_2}+P_{x_1}=\rho \Phi_{x_1},\\
(\rho uv)_{x_1}+(\rho v^2)_{x_2}+P_{x_2}=\rho \Phi_{x_2},\\
(\rho u\mathcal{E}+pu)_{x_1}+(\rho v\mathcal{E}+pv)_{x_2}=\rho \mathbf{u}\cdot\nabla\Phi,\\
\Delta\Phi=\rho-b({\bf x}),
\end{cases}
\end{equation}
provided $\theta_0\in\left(0,\frac{\pi}{2}\right)$, $r_1$, $r_2$ are fixed positive constants. The local sound speed $c$ and the Mach number $M$ are given by $c=\sqrt{P_{\rho}(\rho,S)}$ and $M=\frac{\vert \mathbf{u}\vert}{c}$, respectively. If $M>1$, the system \eqref{ep1} can be decomposed into a nonlinear hyperbolic-elliptic coupled system with homogeneous transport equations, and the flow is called supersonic. If $M<1$, then \eqref{ep1} reduces to a nonlinear elliptic system with homogeneous transport equations, and the flow is subsonic. For a transonic state, the system typically involves degenerate surfaces, type changes, strong nonlinearity and nonlocal effects. Although transonic shock problems have been investigated for the one-dimensional and quasi-one-dimensional Euler-Poisson system (see \cite{Ascher,Gamba,Luo1,Luo2,Rosini,Yeh} and the references therein), there are very few known results about  multi-dimensional transonic Euler-Poisson flows. A recent development is the result in \cite{Bae8}, which established the existence and structural stability of smooth transonic flows for the steady Euler-Poisson system in a two-dimensional nozzle.

In previous works, the  subsonic and supersonic solutions for the multi-dimensional Euler-Poisson system have been studied extensively. When the current flux is sufficiently small, the existence of subsonic solutions was proved in \cite{Degond1,Degond2,Guo,Markowich0,Yeh}. The unique existence and stability of isentropic irrotational subsonic solutions with large variations for the Euler-Poisson system under perturbations of exit pressure were demonstrated in \cite{Bae3}, where a special structure of the associated linearized elliptic system was discovered to derive a priori estimate. The unique existence
of two-dimensional and three-dimensional axisymmetric subsonic flows with nonzero vorticity in a
finite nozzle was established in \cite{Bae1,Bae5} by
using the Helmholtz decomposition.  Similar results for self-gravitating isentropic and non-isentropic subsonic flows in two-dimensional flat nozzles and annular domains were obtained in \cite{Bae4,Cao,Duan,WZ25} via the stream function formulation.
   Recently, the authors in \cite{XZ25}   establish the existence and uniqueness of smooth
subsonic Euler-Poisson flows with nonzero vorticity in a two-dimensional convergent nozzle.

Compared to the subsonic case, the theory of supersonic solutions remains relatively underdeveloped. The structural stability of supersonic irrotational flows and flows with nonzero vorticity in two-dimensional flat nozzles was established in \cite{Bae1}, where a priori estimate for the associated linearized hyperbolic-elliptic system was derived through a delicate choice of the multiplier. Supersonic potential flows in three-dimensional cylinders and two-dimensional divergent nozzles were investigated in \cite{Bae6,Duan0,WZ25-1,WZ25-2}. Owing to the presence of the Poisson equation, the system \eqref{ep1} in the supersonic regime becomes a second order quasilinear hyperbolic-elliptic coupled system, even for isentropic irrotational flows. Consequently, the question on well-posedness theory for the steady Euler-Poisson system for supersonic solutions presents substantially greater analytical challenges than its subsonic case.

The main goal of this paper is to establish the existence and  stability of supersonic flows governed by the full Euler-Poisson system with nonzero vorticity, under the coupled effects of the electric potential and the geometry of a convergent nozzle. Prior work \cite{Liu} shows that transonic shock flows described by Euler system are dynamically stable in divergent nozzles but unstable in convergent ones. Notably, \cite{Luo1} and \cite{Luo2} reveal that when the background charge density is less than the sonic density, flows in one dimensional flat nozzles governed by the Euler-Poisson system exhibit behavior similar to that in divergent nozzles for the Euler system. These observations show that the geometry of a convergent domain exerts a destabilizing effect on the analytical behavior of nozzle flows. Our results show that the electric field force in compressible flows can offset this destabilizing geometric influence, thereby stabilizing key physical features of the flow.

A key element of our analysis is the introduction of a new coordinate rotation, motivated by the goal of constructing solutions propagating in the direction of $-\mathbf{e}_{\tilde{r}}$. Due to the destabilizing effect of the convergent geometry, the well-posedness of supersonic solutions in such a nozzle setting becomes inherently nontrivial. In addition, for flows with nonzero vorticity, the traditional Helmholtz decomposition of the velocity field $\mathbf{u}=\nabla \varphi+\nabla^{\perp}\psi$ with $\perp=(\partial_{x_2},-\partial_{x_1})$, requires a more delicate and technically involved treatment to ensure the unique solvability of the vector potential $\varphi$ and scalar potential $\psi$ in polar coordinates. To overcome this difficulty, we employ a deformation-curl-Poisson decomposition for the Euler-Poisson system, originally introduced in \cite{Weng}. The advantage of this decomposition is that the system can be written as a deformation-curl-Poisson system for the velocity field and the electric potential, together with two homogeneous transport equations for the entropy and the pseudo-Bernoulli's quantity. Within this framework, the vorticity is resolved by an algebraic equation for the pseudo-Bernoulli's function and the entropy. Furthermore, this deformation-curl-Poisson system can be weakly decomposed into a second order hyperbolic-elliptic system for the electric potential function and another associated potential function. To establish the well-posedness of the boundary value problem for the linearized hyperbolic-elliptic coupled system, a critical step is the delicate choice of multiplier to gain the energy estimate.

The rest of this paper is organized as follows. In Section 2, we formulate the problem in detail and state the main result. In Section 3, we employ the deformation-curl-Poisson decomposition to derive a second order linearized hyperbolic-elliptic coupled system and two transport equations from the Euler-Poisson system around radially symmetric supersonic solutions. Furthermore, to  obtain the well-posedness of the linearized
problem,  a key ingredient is a priori estimates given in Proposition \ref{prop1}. In Section 4, we establish the nonlinear structural stability of supersonic flows with nonzero vorticity for the Euler-Poisson system under multi-dimensional perturbations of boundary conditions. The proof mainly relies on the iteration method and the estimates  for the  hyperbolic-elliptic system and the transport equations.

\section{The nonlinear problem and main results}\noindent
\par Introduce the polar coordinates $(\tilde r, \tilde\theta)$:
 \begin{eqnarray*}
\tilde r=\sqrt{x_1^2+x_2^2},\quad \tilde\theta=\arctan \frac{x_2}{x_1}.
\end{eqnarray*}
Assume that the velocity, the density, the pressure, the electric potential and the ion background density are of the form
\begin{equation*}
\begin{aligned}
{\bf u}({\bf x})= \tilde U(\tilde r,\tilde \theta)\mathbf{e}_{\tilde r}+ \tilde V(\tilde r,\tilde\theta)\mathbf{e}_{\tilde\theta}, \ \
\rho({\bf x})=\tilde\rho(\tilde r,\tilde \theta),\ \ P({\bf x})=\tilde P(\tilde r,\tilde \theta),\ \ \Phi({\bf x})=\tilde \Phi(\tilde r,\tilde \theta),\ \ b({\bf x})=\tilde b(\tilde r,\tilde \theta),
\end{aligned}
\end{equation*}
  where
\begin{align*}
{\bf e}_{\tilde r}=\begin{pmatrix} \cos\theta\\ \sin\theta\end{pmatrix},\quad
{\bf e}_{\tilde\theta}=\begin{pmatrix}-\sin\theta\\ \cos\theta\end{pmatrix}.
\end{align*}
Then the system \eqref{ep1} in the polar coordinates takes the following form
\begin{equation}\label{EP}
\begin{cases}
\begin{aligned}
&(\tilde{r}\tilde{\rho} \tilde{U})_{\tilde{r}}+(\tilde{\rho} \tilde{V})_{\tilde{\theta}}=0,\\
&\tilde{\rho} \tilde{U}\tilde{U}_{\tilde{r}}+\frac{1}{\tilde{r}}\tilde{\rho} \tilde{V}\tilde{U}_{\tilde{\theta}}-\frac{1}{\tilde{r}}\tilde{\rho} \tilde{V}^2+\tilde{P}_{\tilde{r}}=\tilde{\rho}\tilde{\Phi}_{\tilde{r}},\\
&\tilde{\rho} \tilde{U}\tilde{V}_{\tilde{r}}+\frac{1}{\tilde{r}}\tilde{\rho} \tilde{U}\tilde{V}+\frac{1}{\tilde{r}}\tilde{\rho} \tilde{V}\tilde{V}_{\tilde{\theta}}+\frac{1}{\tilde{r}}\tilde{P}_{\tilde{\theta}}
=\frac{1}{\tilde{r}}\tilde{\rho}\tilde{\Phi}_{\tilde{\theta}},\\
&(\tilde{\rho} \tilde{U}\tilde{\mathscr{B}})_{\tilde{r}}+\frac{1}{\tilde{r}}(\tilde{\rho} \tilde{V}\tilde{\mathscr{B}})_{\tilde{\theta}}+\frac{1}{\tilde{r}}\tilde{\rho} \tilde{U}\tilde{\mathscr{B}}
=\tilde{\rho}(\tilde{U}\tilde{\Phi}_{\tilde{r}}+\frac{1}{\tilde{r}}\tilde{V}\tilde{\Phi}_{\tilde{\theta}}),\\
&\tilde{\Phi}_{\tilde{r}\tilde{r}}+\frac{1}{\tilde{r}^2}\tilde{\Phi}_{\tilde{\theta}\tilde{\theta}}
+\frac{1}{\tilde{r}}\tilde{\Phi}_{\tilde{r}}=\tilde{\rho}-\tilde b,
\end{aligned}
\end{cases}
\end{equation}
where the Bernoulli function $\tilde{\mathscr{B}}$ is given  by
\begin{equation}
\tilde{\mathscr{B}}=\frac{\tilde{U}^2+\tilde{V}^2}{2}+\frac{\gamma \tilde{P}}{(\gamma-1)\tilde{\rho}}.
\end{equation}
Furthermore, we define the pseudo-Bernoulli function $\tilde{\mathscr{K}}$  by
\begin{equation*}
\tilde{\mathscr{K}}=\tilde{\mathscr{B}}-\tilde{\Phi}.
\end{equation*}
Then it is derived from \eqref{EP} that
\begin{equation}\label{EP-k}
(\tilde{\rho} \tilde{U}\tilde{\mathscr{K}})_{\tilde{r}}+\frac{1}{\tilde{r}}(\tilde{\rho} \tilde{V}\tilde{\mathscr{K}})_{\tilde{\theta}}+\frac{1}{\tilde{r}}\tilde{\rho} \tilde{U}\tilde{\mathscr{K}}
=0.
\end{equation}
\par As we seek solutions flowing in the direction of $-\mathrm{e}_{\tilde{r}}$, introduce new variables $(r,\hat{r},\theta)=(r_2-\tilde{r},\tilde{r},\tilde{\theta})$ and set
\begin{equation}
(\rho, U, V, P, \Phi,b)(r,\theta):=(\tilde{\rho}, -\tilde{U}, \tilde{V}, \tilde{P}, \tilde{\Phi},\tilde b)(\tilde{r},\tilde{\theta}),
\end{equation}
\eqref{EP} is transformed into
\begin{equation}\label{rEP1}
\begin{cases}
\begin{aligned}
&(\hat{r}\rho U)_{r}+(\rho V)_{\theta}=0,\\
&\rho UU_{r}+\frac{1}{\hat{r}}\rho VU_{\theta}+\frac{1}{\hat{r}}\rho V^2+P_{r}=\rho\Phi_{r},\\
&\rho UV_{r}-\frac{1}{\hat{r}}\rho UV+\frac{1}{\hat{r}}\rho VV_{\theta}+\frac{1}{\hat{r}}P_{\theta}
=\frac{1}{\hat{r}}\rho\Phi_{\theta},\\
&\rho U\mathscr{K}_r+\frac{1}{\hat{r}} \rho V\mathscr{K}_{\theta}=0,\\
&\Phi_{rr}+\frac{1}{\hat{r}^2}\Phi_{\theta\theta}
-\frac{1}{\hat{r}}\Phi_{r}=\rho-b,
\end{aligned}
\end{cases}
\end{equation}
where
\begin{equation}\label{realK}
\mathscr{K}=\frac{U^2+V^2}{2}+\frac{\gamma \mathrm{e}^{S}\rho^{\gamma-1}}{\gamma-1}-\Phi.
\end{equation}
The two-dimensional convergent nozzle $\tilde\Omega$ becomes
\begin{equation*}
\Omega_{\mathcal{R}}=\Big\{(r,\theta):0<r<\mathcal{R}=r_2-r_1, -\theta_0<\theta<\theta_0\Big\}.
\end{equation*}
\par Before presenting the  stability problem and stating our main result in detail, we first introduce the radially symmetric supersonic solutions to the steady Euler-Poisson system.  Fix $b$ to be a constant $b_0>0$, set $\frac{\mathrm{d}\tilde{\Phi}(\tilde{r})}{\mathrm{d}\tilde{r}}=\tilde{E}(\tilde{r})$. Then
\begin{equation*}
\frac{\mathrm{d}\tilde{\Phi}(\tilde{r})}{\mathrm{d}\tilde{r}}
=-\frac{\mathrm{d}\bar\Phi(r)}{\mathrm{d}r}=\tilde{E}(\tilde{r})=\bar E(r).
\end{equation*}
In the following, we denote $\frac{\mathrm{d}}{\mathrm{d}r}$ by $'$. Suppose that $(\bar\rho, \bar U, \bar P, \bar E)(r)$ solves the ODE system
\begin{equation}\label{-sreq}
\begin{cases}
(\hat{r}\bar \rho \bar U)'=0,\ \ &r\in[0,\mathcal{R}],\\
(\hat{r}\bar \rho \bar U^2)'+\hat{r}\bar P'=-\hat{r}\bar\rho \bar E,\ \ &r\in[0,\mathcal{R}],\\
(\hat{r}\bar\rho\bar U\bar{\mathscr{B}})'=-\hat{r}\bar\rho \bar U\bar E,\ \ &r\in[0,\mathcal{R}],\\
(\hat{r}\bar E)'=-\hat{r}(\bar\rho-b_0),\ \ &r\in[0,\mathcal{R}],\\
\end{cases}
\end{equation}
with the initial value
\begin{equation}\label{-srco}
(\bar\rho,\bar U,\bar P,\bar E)(0)=(\rho_0,U_0,P_0,-E_0),
\end{equation}
where  $\rho_0$, $U_0$, $P_0$ and $E_0$ are positive constants. In addition, the Bernoulli function $\mathscr{B}$ is given by
\begin{equation*}
\bar{\mathscr{B}}=\frac{\bar U^2}{2}+\frac{\gamma \bar{P}}{(\gamma-1){\bar\rho}}=\frac{\bar U^2}{2}+\frac{\gamma e^{\bar S} \bar\rho^{\gamma-1}}{\gamma-1}.
\end{equation*}
It follows from \eqref{-sreq} that
\begin{equation}\label{J-0}
\begin{cases}
\hat{r}(\bar\rho \bar U)(r)=J_0,\quad J_0=r_2\rho_0U_0,\ \ &r\in[0,\mathcal{R}],\\
\bar S(r)=S_0,\quad S_0=\ln\frac{P_0}{\rho_0^{\gamma}},\ \ &r\in[0,\mathcal{R}],\\
\bar{ \mathscr{B}}'=-E, \ \ &r\in[0,\mathcal{R}].\\
\end{cases}
\end{equation}
Furthermore, it holds that
\begin{equation*}
\begin{split}
\bar\rho'=\frac{\bar\rho\bar E}{\bar c^2(\bar M^2-1)}+\frac{\bar\rho \bar M^2}{\hat{r}(\bar M^2-1)},\quad
\bar U'=\frac{\bar U\bar E}{\bar c^2(1-\bar M^2)}+\frac{\bar U}{\hat{r}(1-\bar M^2)},
\end{split}
\end{equation*}
provided $\bar M^2(r)=\frac{\bar U^2}{\bar c^2}$. The definition of $\bar M^2$ gives that
\begin{equation*}
\begin{split}
\bar\rho=\left(\frac{J_0^2}{\gamma\mathrm{e}^{S_0}\hat{r}^2\bar M^2}\right)^{\frac{1}{\gamma+1}}
:=\mu_0\left(\frac{1}{\hat{r}^2\bar M^2}\right)^{\frac{1}{\gamma+1}},\ \
\bar c^2=\gamma\mathrm{e}^{S_0}\bar \rho^{\gamma-1}
=\gamma\mathrm{e}^{S_0}\mu_0^{\gamma-1}
\left(\frac{1}{\hat{r}^2\bar M^2}\right)^{\frac{\gamma-1}{\gamma+1}}.
\end{split}
\end{equation*}
Then direct calculations deduce that
\begin{equation}\label{M2}
\begin{split}
(\bar M^2)'&=\frac{\bar M^2}{1-\bar M^2}\left(\frac{(\gamma+1)\bar E}{\bar c^2}+\frac{2+(\gamma-1)\bar M^2}{\hat{r}}\right)\\
&=\frac{\bar M^2}{1-\bar M^2}
\bigg(\frac{(\gamma+1)\bar E}{\gamma\mathrm{e}^{S_0}\mu_0^{\gamma-1}}(\hat{r}^2\bar M^2)
^{\frac{\gamma-1}{\gamma+1}}+\frac{2+(\gamma-1)\bar M^2}{\hat{r}}\bigg)\\
&:=h_1(r,\bar M^2,\bar E)
\end{split}
\end{equation}
and
\begin{equation}\label{rE}
\begin{split}
(\hat{r}\bar E)'=-\hat{r}\bigg(\mu_0\bigg(\frac{1}{\hat{r}^2\bar M^2}
\bigg)^{\frac{1}{\gamma+1}}-b_0\bigg):=h_2(r,\bar M^2,\bar E).
\end{split}
\end{equation}
This, together with \eqref{J-0}, shows that the initial value problem \eqref{-sreq}-\eqref{-srco} is equivalent to
\begin{equation}\label{sreq1}
\begin{cases}
\hat{r}(\bar\rho \bar U)(r)=J_0, \quad \bar S(r)=S_0,\ \ &r\in[0,\mathcal{R}],\\
(\bar M^2)'=h_1(r,\bar M^2,\bar E),\ \ &r\in[0,\mathcal{R}],\\
(\hat{r}\bar E)'=h_2(r,\bar M^2,\bar E),\ \ &r\in[0,\mathcal{R}],\\
\end{cases}
\end{equation}
with
the the initial value:
\begin{equation}\label{-srco-c}
(\bar M^2,\bar E)(0)=\bigg(\frac{\rho_0U_0^2}{\gamma P_0},E_0\bigg).
\end{equation}
\begin{proposition}\label{pr1}
Fix $\gamma>1 $ and $ b_0>0$. For given positive constants $ r_2 $, $\rho_0$, $U_0$ and $P_0$ satisfying
$$\frac{\rho_0U_0^2}{\gamma P_0}>1, $$  if $r_1\in(0,r_2)$ satisfies
\begin{equation}\label{LE}
\ln\frac{r_2}{r_1}<\frac{\gamma+1}{2(\gamma-1)},
\end{equation}
 then there exists a constant $\check{E}>0$ depending only on the data $(r_1, r_2, \gamma,b_0, \rho_0,U_0,P_0 )$ such that, for any $E_0>\check{E}$, the initial value problem \eqref{sreq1} with
\eqref{-srco-c}
has a unique smooth supersonic solution $(\bar M^2,\bar E)(r)$ satisfying
\begin{equation}
(\bar M^2)'>0\quad\hbox{for}\quad r\in[0,\mathcal{R}],
\end{equation}
provided $\mathcal{R}=r_2-r_1$.
\end{proposition}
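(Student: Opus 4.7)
The plan is to treat the initial value problem \eqref{sreq1}--\eqref{-srco-c} as a standard coupled ODE system for $(\bar M^2, \bar E)$ on $[0, \mathcal{R}]$. Since the right hand sides $h_1$ and $h_2$ in \eqref{M2}--\eqref{rE} are $C^\infty$ on the open set $\{\bar M^2 \ne 1,\ \hat r > 0\}$ and the initial data give $M_0^2 := \bar M^2(0) = \rho_0 U_0^2/(\gamma P_0) > 1$, Picard--Lindel\"{o}f yields a unique smooth solution on a maximal subinterval $[0, r^*) \subseteq [0, \mathcal{R}]$ along which $\bar M^2 > 1$. Everything reduces to showing, by a continuity/bootstrap argument, that $r^* = \mathcal{R}$ and that $(\bar M^2)' > 0$ throughout.

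In the supersonic regime $\bar M^2 > 1$, the factor $\bar M^2/(1 - \bar M^2)$ in $h_1$ is negative, so the desired inequality $(\bar M^2)' > 0$ is equivalent to
\begin{equation*}
\bar E(r) < -\frac{\bar c^2(r)\,\bigl(2 + (\gamma-1)\bar M^2(r)\bigr)}{(\gamma+1)\,\hat r(r)}.
\end{equation*}
At $r = 0$ the initial value $\bar E(0) = -E_0$ satisfies this strictly provided $E_0$ exceeds a data-dependent threshold; the content of the proposition is to propagate it. I would carry two bootstrap hypotheses
\begin{equation*}
M_0^2 \le \bar M^2(r) \le M_*^2,\qquad \bar E(r) \le -\frac{r_2 E_0}{2\hat r(r)},
\end{equation*}
on some subinterval $[0, r_0] \subseteq [0, r^*)$, where $M_*^2$ depends only on the fixed data and is chosen at the end. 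Both hold strictly at $r = 0$, so if I can show they self-improve the standard open-closed argument gives $r_0 = \mathcal{R}$.

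The electric bound is the easier of the two to close. Under the Mach a priori bound, the formula $\bar\rho = \mu_0(\hat r^2 \bar M^2)^{-1/(\gamma+1)}$ is uniformly bounded, so $|h_2|\le C$ for a constant $C = C(\mathrm{data}, M_*^2)$, and integrating $(\hat r \bar E)' = h_2$ from $0$ to $r$ gives
\begin{equation*}
\hat r(r)\bar E(r) = -r_2 E_0 + O(1) \quad\text{uniformly on } [0, \mathcal{R}],
\end{equation*}
with the $O(1)$ term independent of $E_0$. Choosing $\check E$ larger than a suitable multiple of $C$ strictly improves the electric bound, and in turn forces the bracket in $h_1$ to be strictly negative, yielding $(\bar M^2)' > 0$ throughout with a quantitative lower bound. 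The Mach bound is the delicate one; I would derive it by working with the monotone function
\begin{equation*}
G(M^2) := \tfrac{1}{2}\log M^2 - \tfrac{\gamma+1}{2(\gamma-1)}\log\bigl(2 + (\gamma-1)M^2\bigr),
\end{equation*}
whose derivative $G'(M^2) = (1-M^2)/[M^2(2+(\gamma-1)M^2)]$ is negative on $(1,\infty)$ and whose asymptotics $G(M^2)\sim -(\gamma-1)^{-1}\log M^2$ at infinity convert lower bounds on $G(\bar M^2)$ into upper bounds on $\bar M^2$. A direct calculation from \eqref{M2} gives
\begin{equation*}
\frac{d}{dr}G(\bar M^2) = \frac{1}{\hat r} + \frac{(\gamma+1)\bar E}{\bar c^2 \bigl(2 + (\gamma-1)\bar M^2\bigr)},
\end{equation*}
and integrating against the bootstrap control on $\bar E$ and $\bar c^2$ produces the required uniform lower bound on $G(\bar M^2)$. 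The geometric assumption $\ln(r_2/r_1) < (\gamma+1)/(2(\gamma-1))$ enters precisely as the budget on $\int_0^{\mathcal{R}} ds/\hat r = \ln(r_2/r_1)$ that keeps this bound above the choking threshold, so that $M_*^2$ can be chosen finite and depending only on the data.

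The main obstacle is the nonlinear feedback in the Mach-number equation: $\bar c^{-2}$ grows like $(\hat r^2 \bar M^2)^{(\gamma-1)/(\gamma+1)}$ as $\bar M^2$ grows, so the electric forcing term in the equation for $G(\bar M^2)$ is not a priori bounded and could in principle drive $\bar M^2$ to infinity in finite $r$. What saves us is that the exponent $(\gamma-1)/(\gamma+1)$ is strictly less than $1$, so the growth is sublinear, and that the geometric contraction $\ln(r_2/r_1)$ is restricted by the hypothesis. Once both bootstrap hypotheses close, open-closed continuity forces $r^* = \mathcal{R}$, the monotonicity $(\bar M^2)' > 0$ holds on $[0,\mathcal{R}]$, and smoothness and uniqueness of the solution follow from local Picard uniqueness applied at each point.
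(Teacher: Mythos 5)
The paper does not prove Proposition \ref{pr1} itself (it refers to \cite[Proposition 3.1]{Bae7}), so your attempt is judged on its own terms. Your skeleton is sound: local Picard existence, continuation via a priori bounds, the equivalence of $(\bar M^2)'>0$ with $\bar E<-\bar c^2(2+(\gamma-1)\bar M^2)/((\gamma+1)\hat r)$, and the estimate $\hat r\bar E=-r_2E_0+O(1)$ with $O(1)$ independent of $E_0$ are all correct, as are your formulas for $G'$ and for $\frac{d}{dr}G(\bar M^2)$. The gap is in the step you call ``the delicate one.'' Your plan is to fix $M_*^2$ ``depending only on the fixed data,'' then choose $\check E$ so large that the bracket in $h_1$ is negative on $\{\bar M^2\le M_*^2\}$. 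No such $M_*^2$ exists: at $r=0$ one has $(\bar M^2)'(0)\sim CE_0$, and solving the model equation $(\bar M^2)'\sim CE_0(\bar M^2)^{(\gamma-1)/(\gamma+1)}$ shows $\bar M^2(\mathcal R)\sim(CE_0\mathcal R)^{(\gamma+1)/2}$, so any admissible upper bound grows with $E_0$. Consequently the negativity of the bracket cannot be bought simply by enlarging $E_0$: the competing term $(2+(\gamma-1)\bar M^2)/\hat r$ grows with $E_0$ as well, and one must check that the \emph{ratio} of the two terms stays below $1$. Your proposal never performs this comparison.

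That comparison is exactly where \eqref{LE} enters, and your stated mechanism for \eqref{LE} is the wrong one. In your $G$-equation the geometric term $+1/\hat r$ \emph{increases} $G$, i.e.\ pushes $\bar M^2$ toward the sonic value (choking), while the electric term decreases $G$ and drives $\bar M^2$ up; so the budget $\int_0^{\mathcal R}ds/\hat r=\ln(r_2/r_1)$ controls an \emph{upper} bound on $G$ (a lower bound on $\bar M^2-1$), not the lower bound on $G$ that would ``keep $M_*^2$ finite.'' The correct use of \eqref{LE} is quantitative and goes through the Bernoulli law \eqref{J-0}: since $\bar{\mathscr B}'=-\bar E$ and $|\bar E|\le(r_2E_0+O(1))/\hat r$, one gets
\begin{equation*}
\bar U^2(r)\;\le\;2\bar{\mathscr B}(0)+2\int_0^r|\bar E|\,ds\;\le\;2r_2E_0\ln\frac{r_2}{r_1}+O(1),
\end{equation*}
while the bracket in $h_1$ is negative precisely when $(\gamma+1)\hat r|\bar E|>2\bar c^2+(\gamma-1)\bar U^2$, i.e.\ $\bar U^2<\frac{\gamma+1}{\gamma-1}r_2E_0+O(1)$ (here $\bar c^2=O(1)$ by the lower bound $\hat r^2\bar M^2\ge r_1^2M_0^2$). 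Matching the two displays for large $E_0$ requires $2\ln(r_2/r_1)<\frac{\gamma+1}{\gamma-1}$, which is \eqref{LE}. Without this (or an equivalent) comparison your bootstrap does not close, so the proof as written has a genuine gap at its central step.
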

The detailed proof of Proposition \ref{pr1} is given in  \cite[Proposition 3.1]{Bae7}, so we omit it.
Define
\begin{equation}\label{back1}
\bar U(r)=\frac{J_0}{\hat{r}\bar \rho},\  \bar P(r)=\mathrm{e}^{S_0}\bar\rho^{\gamma}, \ \
\bar{\Phi}(r)=-\int_{0}^r\bar E(t)\mathrm{d}t+\bigg(\frac{U_0^2}{2}
+\frac{\gamma \mathrm{e}^{S_0}}{\gamma-1}\bigg(\frac{J_0}{r_2U_0}\bigg)^{\gamma-1}\bigg), \
\ r\in[0,\mathcal{R}].
\end{equation}
Then $(\bar\rho,\bar U,\bar P,\bar{\Phi})(r)$ satisfies \eqref{rEP1} in $\Omega_{\mathcal{R}}$. Furthermore,
it follows from the third equation in \eqref{-sreq} and \eqref{back1} that
\begin{equation}\label{backK}
\bar{\mathscr{B}}-\bar{\Phi}\equiv 0,\ \ r\in[0,\mathcal{R}].
\end{equation}

\begin{definition}\label{defe1}
$(\bar\rho,\bar  U, \bar P, \bar \Phi)$   is called the  background  solution  associated with the entrance data $(b_0, \rho_0,$\\$ U_{0},P_{0}, E_{0})$.
\end{definition}
\par For each $ 0<R< \mathcal{R}$, define
\begin{equation*}
\Omega_{{R}}=\Big\{(r,\theta):0<r<R, -\theta_0<\theta<\theta_0\Big\}.
\end{equation*}
The entrance,  exit,  and  nozzle walls of $\Omega_{{R}} $ are  denoted by
\begin{equation*}
\begin{split}
\Gamma_{en}&=\{(r,\theta):r=0,-\theta_0<\theta<\theta_0\},\\
\Gamma_{ex}&=\{(r,\theta):r={R},-\theta_0<\theta<\theta_0\},\\
\Gamma_w&=\{(r,\theta):0<r<{R},\theta=\pm\theta_0\}.
\end{split}
\end{equation*}
The main concern in this paper is to solve the following problem.
\begin{problem}\label{pro1}
For given functions $(U_{en},V_{en}, E_{en}, \mathscr{K}_{en}, S_{en}, \Phi_{ex}, b)$  sufficiently close to $(U_{0},0, E_0, \mathscr{K}_{0},$\\$ S_0, \bar\Phi(R), b_0)$, find a solution $(U,V,\Phi,\mathscr{K},S)$ to the system \eqref{rEP1} in $\Omega_{{R}}$, subject to the boundary conditions
\begin{equation}\label{BC}
\begin{cases}
(U,V,\mathscr{K},S,\Phi_r)(0,\theta)=(U_{en},V_{en},\mathscr{K}_{en},S_{en},
-E_{en})(\theta),\ \ &{\rm{on}}\ \ \Gamma_{en},\\
\Phi(R,\theta)=\Phi_{ex}(\theta),\ \ &{\rm{on}}\ \ \Gamma_{ex},\\
V(r,\pm\theta_0)=\Phi_{\theta}(r,\pm\theta_0)=0, \ \ &{\rm{on}}\ \ \Gamma_w.
\end{cases}
\end{equation}
with $U>0$. Furthermore, the flow is supersonic in $\overline{\Omega_{{R}}}$.
\end{problem}
Here we introduce the following notations. For  given functions
\begin{equation*}
(U_{en},V_{en}, E_{en}, \mathscr{K}_{en}, S_{en}, \Phi_{ex}, b)\in C^{3}([-\theta_0,\theta_0])\times(C^{4}([-\theta_0,\theta_0]))^5\times C^{2}(\overline{\Omega_{R}})
\end{equation*}
with $\mathop{min}\limits_{[-\theta_0,\theta_0]}U_{en}>0$, set
\begin{equation}\label{sigmar}
\begin{split}
\sigma:=\sigma_1(b)+\sigma_2(U_{en}, V_{en}, E_{en}, \Phi_{ex})+\sigma_3(\mathscr{K}_{en}, S_{en}),
\end{split}
\end{equation}
where
\begin{equation*}
\begin{aligned}
\sigma_1(b)&:=
\Vert b-b_0\Vert_{C^{2}(\overline{\Omega_{R}})},\\
\sigma_2(U_{en}, V_{en}, E_{en}, \Phi_{ex})&:=
\Vert U_{en}-U_0\Vert_{C^{3}([-\theta_0,\theta_0])}+\Vert (V_{en}, E_{en}, \Phi_{ex})-(0, E_0, \bar\Phi(R))\Vert_{C^{4}([-\theta_0,\theta_0])},\\
\sigma_3(\mathscr{K}_{en}, S_{en})&:=
\Vert (\mathscr{K}_{en}, S_{en})
-(0, S_0)\Vert_{C^{4}([-\theta_0,\theta_0])}.
\end{aligned}
\end{equation*}
Furthermore, suppose that $(U_{en},V_{en}, E_{en}, \mathscr{K}_{en}, S_{en}, \Phi_{ex}, b)$ satisfies
\begin{equation}\label{compatibility conditions}
\begin{split}
&\partial_{\theta}b=0 \quad\hbox{on}\quad \Gamma_w,\\
&\frac{\mathrm{d} U_{en}}{\mathrm{d}{\theta}}(\pm\theta_0)=\frac{\mathrm{d}^{k-1} V_{en}}{\mathrm{d}{\theta}^{k-1}}(\pm\theta_0)=\frac{\mathrm{d}^{k} E_{en}}{\mathrm{d}{\theta}^{k}}(\pm\theta_0)
=\frac{\mathrm{d}^k \Phi_{ex}}{\mathrm{d}{\theta}^k}(\pm\theta_0)\\
&=\frac{\mathrm{d}^k \mathscr{K}_{en}}{\mathrm{d}{\theta}^k}(\pm\theta_0)=\frac{\mathrm{d}^k S_{en}}{\mathrm{d}{\theta}^k}(\pm\theta_0)=0\quad\hbox{for}\quad k=1,3.
\end{split}
\end{equation}
The main theorem of this paper can be stated as follows.
\begin{theorem}\label{thm1}
Let  $\mathcal{R}$ be given in Proposition \ref{pr1}. Then there exist constants $\bar R\in(0,\mathcal{R})$, $\sigma_{*}>0$, $\mathcal{C}^*_1>0$  and $\kappa_*>0$ depending only on the background data  so that, whenever $R\in(0,\bar R]$, if  $(U_{en},V_{en}, E_{en}, \mathscr{K}_{en}, S_{en}, \Phi_{ex}, b)$ satisfies $\sigma\leq \sigma_{*}$ and the compatibility condition \eqref{compatibility conditions}, then Problem \ref{pro1} has a unique supersonic solution $(U,V,\Phi,\mathscr{K},S)\in [H^3(\Omega_{R})]^2\times[H^4(\Omega_{R})]^3$ satisfying
\begin{equation}\label{sH4}
\Vert(U,V)-(\bar U,0)\Vert_{H^3(\Omega_{R})}
+\Vert\Phi-\bar\Phi\Vert_{H^4(\Omega_{R})}\leq \mathcal{C}^*_1\sigma,
\end{equation}
\begin{equation}\label{sH4-1}
\Vert(\mathscr{K},S)-(0,S_0)\Vert_{H^4(\Omega_{R})}
+\Vert\Phi-\bar\Phi\Vert_{H^4(\Omega_{R})}\leq \mathcal{C}^*_1\sigma_3(\mathscr{K}_{en}, S_{en})
\end{equation}
and
\begin{equation}\label{sc}
c^2(U,V,\Phi,\mathscr{K})-(U^2+V^2)\leq-\kappa_* \quad\hbox{in}\quad \overline{\Omega_{R}},
\end{equation}
provided
\begin{equation*}
c^2(U,V,\Phi,\mathscr{K})=(\gamma-1)\left(\Phi-\frac{U^2+V^2}{2}+\mathscr{K}\right).
\end{equation*}
Furthermore, the solution $(U,V,\Phi,\mathscr{K},S)$ satisfies the compatibility conditions
\begin{equation}\label{cc}
\partial_{\theta}U=\partial^{k-1}_{\theta}V=\partial^k_{\theta}\Phi=\partial^k_{\theta}\mathscr{K}=\partial^k_{\theta}S=0 \quad\hbox{on}\ \ \Gamma_w
\end{equation}
for $k=1,3$ in the sense of trace. Furthermore, for each $\alpha\in(0,1)$, it follows from \eqref{sH4} and \eqref{sH4-1} that
\begin{equation}
\Vert(U,V)-(\bar U,0)\Vert_{C^{1,\alpha}(\overline{\Omega_{R}})}
+\Vert\Phi-\bar\Phi\Vert_{C^{2,\alpha}(\overline{\Omega_{R}})}\leq \mathcal{C}^*_2\sigma
\end{equation}
and
\begin{equation}
\Vert(\mathscr{K},S)-(0,S_0)\Vert_{C^{2,\alpha}(\overline{\Omega_{R}})}
\leq \mathcal{C}^*_2\sigma_3(\mathscr{K}_{en}, S_{en})
\end{equation}
for  some constant $\mathcal{C}^*_2>0$ depending only on the background data and $\alpha$.
\end{theorem}

\section{The linearized problem}\noindent
\par In this section, the deformation-curl-Poisson decomposition is employed to reformulate the steady Euler-Poisson system. Then the corresponding deformation-curl-Poisson system is decomposed into a second order hyperbolic-elliptic system with two transport equations, subject to the nonlinear boundary conditions for Problem \ref{pro1}. A priori estimates are obtained to establish the well-posedness of the associated linearized boundary value problem.
\subsection{The deformation-curl-Poisson decomposition}\noindent
\par If $\rho>0$ in $\overline{\Omega_R}$, it can be directly derived from the steady Euler-Poisson system \eqref{rEP1} that the entropy and the  pseudo-Bernoulli's quantity are transported by the equations
\begin{equation}\label{2-x}
\begin{split}
&US_r+\frac{1}{\hat{r}}VS_{\theta}=0,\\
&U\mathscr{K}_r+\frac{1}{\hat{r}} V\mathscr{K}_{\theta}=0.
\end{split}
\end{equation}
Furthermore, it follows from \eqref{realK} that the density can be represented as
\begin{equation}\label{2rho}
\rho=\mathcal{H}(S,\mathscr{K},U,V,\Phi)=
\bigg(\frac{\gamma-1}{\gamma\mathrm{e}^{S}}
\bigg(\mathscr{K}+\Phi-\frac{U^2+V^2}{2}\bigg)\bigg)
^{\frac{1}{\gamma-1}}
\end{equation}
Substituting \eqref{2rho} into the continuity equation in \eqref{rEP1} leads to
\begin{equation}\label{2rho-de}
\begin{aligned}
&A_{11}(\mathscr{K},U,V,\Phi)U_r+A_{22}(\mathscr{K},U,V,\Phi)V_{\theta}
+A_{12}(U,V)U_{\theta}\\
& +A_{21}(U,V)V_{r}
+B(\mathscr{K},U,V,\Phi)
=0,
\end{aligned}
\end{equation}
where
\begin{equation*}
\begin{split}
&A_{11}(\mathscr{K},U,V,\Phi)=c^2(\mathscr{K},U,V,\Phi)-U^2,\\
&A_{22}(\mathscr{K},U,V,\Phi)=\frac{1}{\hat{r}}(c^2(\mathscr{K},U,V,\Phi)-V^2),\\
&A_{12}(U,V)=-\frac{1}{\hat{r}}UV,\quad
A_{21}(U,V)=-UV,\\
&B(\mathscr{K},U,V,\Phi)=
-\frac{1}{\hat{r}}c^2(\mathscr{K},U,V,\Phi)U+
U\Phi_r+\frac{1}{\hat{r}}V\Phi_{\theta}.\\
\end{split}
\end{equation*}
One can follow from the second equation in \eqref{rEP1} to derive that
\begin{equation}\label{2vor}
U((\hat{r}V)_r-U_{\theta})=\frac{\mathrm{e}^{S}\mathcal{H}^{\gamma-1}(S,\mathscr{K},U,V,\Phi)}
{\gamma-1}S_{\theta}-\mathscr{K}_{\theta}.
\end{equation}
If a smooth flow does not contain the vacuum and the stagnation points, then the system \eqref{rEP1} is equivalent to
\begin{equation}\label{r1EP1}
\begin{cases}
A_{11}(\mathscr{K},U,V,\Phi)U_r+A_{22}(\mathscr{K},U,V,\Phi)V_{\theta}
+A_{12}(U,V)U_{\theta}\\
+A_{21}(U,V)V_{r}
+B(\mathscr{K},U,V,\Phi)
=0,\\
(\hat{r}\Phi_{r})_r+\frac{1}{\hat{r}}\Phi_{\theta\theta}
=\hat{r}(\mathcal{H}(S,\mathscr{K},U,V,\Phi)-b),\\
U((\hat{r}V)_r-U_{\theta})=\frac{\mathrm{e}^{S}\mathcal{H}^{\gamma-1}(S,\mathscr{K},\Phi,U,V)}
{\gamma-1}S_{\theta}-\mathscr{K}_{\theta},\\
US_r+\frac{1}{\hat{r}}VS_{\theta}=0,\\
U\mathscr{K}_r+\frac{1}{\hat{r}} V\mathscr{K}_{\theta}=0.
\end{cases}
\end{equation}
On the other hand, $(\bar U,\bar\Phi)(r)$ satisfies
\begin{equation}\label{sr1EP1}
\begin{cases}
\begin{aligned}
&\bar{A}_{11}\bar{U}'+\bar{B}=0,\\
&(\hat{r}\bar{\Phi}')'
=\hat{r}(\bar{\mathcal{H}}(S_0,0,\bar{U},\bar{\Phi})-b_0),\\
\end{aligned}
\end{cases}
\end{equation}
provided
\begin{equation*}
\begin{split}
&\bar{A}_{11}=A_{11}(0,\bar{U},0,\bar{\Phi})=
\bar c^2-\bar{U}^2,\ \
\bar{B}=B(0,\bar{U},0,\bar{\Phi})=-\frac{1}{\hat{r}}\bar c^2\bar{U}+\bar{U}\bar{\Phi}_r,\\
&  \bar c^2=\bigg((\gamma-1)\bigg(\bar\Phi-\frac{\bar U^2}{2}\bigg)\bigg), \ \ \bar{\mathcal{H}}(S_0,0,\bar{U},\bar{\Phi})=
\left(\frac{\gamma-1}{\gamma\mathrm{e}^{S_0}}\left(\bar{\Phi}-\frac{\bar{U}^2}{2}\right)\right)
^{\frac{1}{\gamma-1}}.
\end{split}
\end{equation*}
\par Define
\begin{equation*}\label{newva}
(\mathcal{U},\mathcal{V},\check{\Phi},\mathcal{S},\mathcal{K})(r,\theta)
=(U,V,\Phi,S,\mathscr{K})(r,\theta)-(\bar U(r),0,\bar\Phi(r),S_0,0).
\end{equation*}
It follows from \eqref{2vor}, \eqref{r1EP1} and \eqref{sr1EP1} that the functions $(\mathcal{U},\mathcal{V},\check{\Phi},\mathcal{S},\mathcal{K})$ satisfy the following system
\begin{equation}\label{elli1}
\begin{cases}
\begin{aligned}
&L_1(\mathcal{U},\mathcal{V},\check{\Phi})=f_1(r,\theta;\mathcal{K},\mathcal{U},\mathcal{V},\check{\Phi}),\\
&L_2(\mathcal{U},\check{\Phi})=f_2(r,\theta;\mathcal{S},\mathcal{K},\mathcal{U},\mathcal{V},\check{\Phi}),\\
&(\hat{r}\mathcal{V})_r-\mathcal{U}_{\theta}=f_3(r,\theta;\mathcal{S},\mathcal{K},\mathcal{U},\mathcal{V},\check{\Phi}),\\
&(\bar U+\mathcal{U})\mathcal{S}_r+\frac{1}{\hat{r}}\mathcal{V}\mathcal{S}_{\theta}=0,\\
&(\bar U+\mathcal{U})\mathcal{K}_r+\frac{1}{\hat{r}}\mathcal{V}\mathcal{K}_{\theta}=0,
\end{aligned}
\end{cases}
\end{equation}
provided
\begin{equation*}
\begin{split}
L_1(\mathcal{U},\mathcal{V},\check{\Phi})=&
\hat{a}_{11}(r,\theta;\mathcal{K},\mathcal{U},\mathcal{V},\check{\Phi})\mathcal{U}_r
+\hat{a}_{22}(r,\theta;\mathcal{K},\mathcal{U},\mathcal{V},\check{\Phi})\mathcal{V}_{\theta}\\
&+\hat{a}_{12}(\mathcal{U},\mathcal{V})\mathcal{U}_{\theta}
+\hat{a}_{21}(\mathcal{U},\mathcal{V})\mathcal{V}_{r}
+\bar{a}_1(r)\mathcal{U}+\bar{b}_1(r)\check{\Phi}+\bar{b}_2(r)\check{\Phi}_r,\\
L_2(\mathcal{U},\check{\Phi})=&(\hat{r}\check{\Phi}_r)_{r}
+\frac{1}{\hat{r}}\check{\Phi}_{\theta\theta}
+\bar{c}_1(r)\mathcal{U}+\bar{c}_2(r)\check{\Phi},
\end{split}
\end{equation*}
and
\begin{equation*}
\begin{aligned}
&\hat{a}_{ii}(r,\theta;\mathcal{K},\mathcal{U},\mathcal{V},\check{\Phi})
=\frac{1}{\bar{c}^2}A_{ii}(\mathcal{K},\bar U+\mathcal{U},\mathcal{V},\bar\Phi+\check{\Phi})\quad {\rm{for}} \quad i=1,2,\\
&\hat{a}_{ij}(r,\theta;\mathcal{U},\mathcal{V})=\frac{1}{\bar{c}^2}A_{ij}(\bar U+\mathcal{U},\mathcal{V})\quad {\rm{for}} \quad i\neq j=1,2,\\
\end{aligned}
\end{equation*}
\begin{equation*}
\begin{aligned}
&\bar{a}_1(r)=\frac{(\gamma-1)\bar U^2-\bar c^2}{\hat{r}\bar c^2}-\frac{1}{\bar{c}^2}\left((\gamma+1)\bar U\bar U'+\bar E\right)
=\frac{\bar U^2}{\bar c^2}\left(\frac{\gamma-1}{\hat{r}}-\frac{\gamma \bar U'}{\bar U}\right)-\frac{\bar U'}{\bar U},\\
&\bar{b}_1(r)=-\frac{(\gamma-1)\bar U}{\hat{r}\bar c^2}+\frac{(\gamma-1)\bar U'}{\bar c^2},\quad \bar{b}_2=\frac{\bar U}{\bar c^2},\ \
\bar{c}_1(r)=\frac{\hat r\bar\rho\bar U}{\bar c^2},\quad \bar{c}_2(r)=-\frac{\hat r\bar\rho}{\bar c^2},\\
&f_1(r,\theta;\mathcal{K},\mathcal{U},\mathcal{V},\check{\Phi})=
-(A_{11}(\mathcal{K},\bar U+\mathcal{U},\mathcal{V},\bar\Phi+\check{\Phi})-\bar A_{11})\frac{\bar U'}{\bar c^2}
-\frac{1}{\bar c^2}B(\mathcal{K},\bar U+\mathcal{U},\mathcal{V},\bar\Phi+\check{\Phi})\\
&+\frac{\bar B}{\bar c^2}+\bar{a}_1(r)\mathcal{U}+\bar{b}_1(r)\check{\Phi}+\bar{b}_2(r)\check{\Phi}_r,\\
&f_2(r,\theta;\mathcal{S},\mathcal{K},\mathcal{U},\mathcal{V},\check{\Phi})
=\hat r\mathcal{H}(S_0+\mathcal{S},\mathcal{K},\bar U+\mathcal{U},\mathcal{V},\bar\Phi+\check{\Phi})-\hat r\bar {\mathcal{H}}+\hat r(b-b_0)
+\bar{c}_1(r)\mathcal{U}+\bar{c}_2(r)\check{\Phi},\\
&f_3(r,\theta;\mathcal{S},\mathcal{K},\mathcal{U},\mathcal{V},\check{\Phi})
=\frac{1}{\bar U+\mathcal{U}}\left(\frac{\mathrm{e}^{S_0+\mathcal{S}}\mathcal{H}^{\gamma-1}(S_0+\mathcal{S},
\mathcal{K},\bar U+\mathcal{U},\mathcal{V},\bar\Phi+\check{\Phi})}
{\gamma-1}\mathcal{S}_{\theta}-\mathcal{K}_{\theta}\right).
\end{aligned}
\end{equation*}
Set $\hat{a}^0_{ii}(r)=\frac{1}{\bar c^2}A_{ii}(0,\bar U,0,\bar\Phi)$ for $i=1,2$, $\hat{a}^0_{ij}(r)=\frac{1}{\bar c^2}A_{ij}(\bar U,0)$ for $i\neq j=1,2$. Then
\begin{equation}\label{coef1}
\begin{split}
\hat a^0_{11}(r)=1-\frac{\bar U^2}{\bar c^2},\quad
\hat a^0_{12}(r)=\hat a^0_{21}(r)=0,\quad
\hat a^0_{22}(r)=\frac{1}{\hat{r}}.
\end{split}
\end{equation}
Furthermore, the boundary conditions for $(\mathcal{U},\mathcal{V},\check{\Phi},\mathcal{S},\mathcal{K})$ are derived as
\begin{equation}\label{aBC1}
\begin{cases}
(\mathcal{U},\mathcal{V},\check{\Phi}_r)(0,\theta)=(U_{en}-U_0,V_{en},-E_{en}+E_0)
(\theta),\ \ &{\rm{on}}\ \ \Gamma_{en},\\
(\mathcal{S},\mathcal{K})(0,\theta)=(S_{en}-S_0,\mathscr{K}_{en})(\theta),\ \ &{\rm{on}}\ \ \Gamma_{en},\\
\check{\Phi}(R,\theta)=\Phi_{ex}(\theta)-\bar\Phi(R),\ \ &{\rm{on}}\ \ \Gamma_{ex},\\
\mathcal{V}(r,\pm\theta_0)=\check{\Phi}_{\theta}(r,\pm\theta_0)=0, \ \ &{\rm{on}}\ \ \Gamma_w.
\end{cases}
\end{equation}
\begin{remark}
{\it Note that $f_3$ in \eqref{elli1} belongs to $H^2(\Omega_R)$ when one looks for the solution $(\mathcal{S},\mathcal{K},\mathcal{U},\mathcal{V},\check{\Phi})$ in $[H^3(\Omega_R)]^4\times H^4(\Omega_R)$. However, due to the structure of the deformation-curl-Poisson system in the supersonic region, the velocity field attains at most the $H^2$ regularity, resulting in a loss of derivatives. To overcome this difficulty, we introduce the stream function via the continuity equation. The stream function has one order higher regularity than the velocity field, and both the entropy and Bernoulli's function can be expressed in terms of it. This approach helps to avoid the possibility of losing derivatives. To this end, we choose suitable function spaces and construct a two-layer iterative scheme.}
\end{remark}

\subsection{The linearized hyperbolic-elliptic system}\noindent

\par We investigate the nonlinear boundary value problem \eqref{elli1} and \eqref{aBC1} by employing the method of iteration. Here we define two iteration sets $\mathcal{I}_{\delta_{\mu}}$ and $\mathcal{I}_{\delta_{\nu}}$ by
\begin{equation}\label{1Set}
\begin{split}
\mathcal{I}_{\delta_{\mu},R}:=\{&(\mu_1,\mu_2)\in [H^4(\Omega_R)]^2: \Vert(\mu_1,\mu_2)\Vert_{H^4(\Omega_R)}<\delta_{\mu},\\ &\partial^k_{\theta}\mu_1=\partial^k_{\theta}\mu_2=0\,\,\hbox{on} \,\,\Gamma_{w}\,\,\hbox{for}\,\, k=1,3\},
\end{split}
\end{equation}
and
\begin{equation}\label{2Set}
\begin{split}
\mathcal{I}_{\delta_{\nu},R}:=\{&(\nu_1,\nu_2,\nu_3)\in [H^3(\Omega_R)]^2\times H^4(\Omega_R): \Vert(\nu_1,\nu_2)\Vert_{H^3(\Omega_R)}+\Vert\nu_3\Vert_{H^4(\Omega_R)}<\delta_{\nu},\\
&\partial_{\theta}\nu_1=\partial^{k-1}_{\theta}\nu_2=\partial^{k}_{\theta}\nu_3=0\,\,\hbox{on} \,\,\Gamma_{w}\,\,\hbox{for}\,\, k=1,3\},
\end{split}
\end{equation}
where the positive constants $\delta_{\mu}$, $\delta_{\nu}$ and $ R $ will be specified later.
\par For fixed $(\mathcal{S}^*,\mathcal{K}^*)\in \mathcal{I}_{\delta_{\mu},R}$ and each $\bm{\mathcal{P}}=(\mathcal{U}^*,\mathcal{V}^*,\check{\Phi}^*)\in \mathcal{I}_{\delta_{\nu},R}$, one can define a linear operator $L^{\A}$ by
\begin{equation*}
\begin{split}
L_1^{\A}(\mathcal{U},\mathcal{V},\check{\Phi})=
\hat{a}_{11}^{\A}\mathcal{U}_r
+\hat{a}_{22}^{\A}\mathcal{V}_{\theta}
+\hat{a}_{12}^{\A}\mathcal{U}_{\theta}
+\hat{a}_{21}^{\A}\mathcal{V}_{r}
+\bar{a}_1\mathcal{U}+\bar{b}_1\check{\Phi}+\bar{b}_2\check{\Phi}_r
\end{split}
\end{equation*}
with
\begin{equation*}
\begin{split}
&\hat{a}_{11}^{\A}=\hat{a}_{11}(r,\theta;\mathcal{K}^*,\mathcal{U}^*,\mathcal{V}^*,\check{\Phi}^*),\\
&\hat{a}_{22}^{\A}=\hat{a}_{22}(r,\theta;\mathcal{K}^*,\mathcal{U}^*,\mathcal{V}^*,\check{\Phi}^*),\\
&\hat{a}_{12}^{\A}=\hat{a}_{12}(r,\theta;\mathcal{U}^*,\mathcal{V}^*),\quad \hat{a}_{21}^{\A}=\hat{a}_{21}(r,\theta;\mathcal{U}^*,\mathcal{V}^*).
\end{split}
\end{equation*}
We first investigate the system for $(\mathcal{U},\mathcal{V},\check{\Phi})$ as follows:
\begin{equation}\label{elli1P}
\begin{cases}
\begin{aligned}
&L_1^{\A}(\mathcal{U},\mathcal{V},\check{\Phi})=f_1(r,\theta;\mathcal{K}^*,\mathcal{U}^*,\mathcal{V}^*,\check{\Phi}^*),\\
&L_2(\mathcal{U},\check{\Phi})=f_2(r,\theta;\mathcal{S}^*,\mathcal{K}^*,\mathcal{U}^*,\mathcal{V}^*,\check{\Phi}^*),\\
&(\hat{r}\mathcal{V})_r-\mathcal{U}_{\theta}=f_3(r,\theta;\mathcal{S}^*,\mathcal{K}^*,\mathcal{U}^*,\mathcal{V}^*,\check{\Phi}^*),\\
\end{aligned}
\end{cases}
\end{equation}
subject to the boundary conditions
\begin{equation}\label{aBC1P}
\begin{cases}
(\mathcal{U},\mathcal{V},\check{\Phi}_r)(0,\theta)=(U_{en}-U_0,V_{en},-E_{en}+E_0)
(\theta),\ \ &{\rm{on}}\ \ \Gamma_{en},\\
\check{\Phi}(R,\theta)=\Phi_{ex}(\theta)-\bar\Phi(R),\ \ &{\rm{on}}\ \ \Gamma_{ex},\\
\mathcal{V}(r,\pm\theta_0)=\check{\Phi}_{\theta}(r,\pm\theta_0)=0, \ \ &{\rm{on}}\ \ \Gamma_w.
\end{cases}
\end{equation}
It can be verified that
\begin{equation}\label{4iter1}
\begin{cases}
\Vert f_1(\cdot,\mathcal{K}^*,\mathcal{U}^*,\mathcal{V}^*,\check{\Phi}^*)\Vert_{H^3(\Omega_R)}\leq C(\delta_{\mu}+\delta_{\nu}^2),\\
\Vert f_2(\cdot,\mathcal{S}^*,\mathcal{K}^*,\mathcal{U}^*,\mathcal{V}^*,\check{\Phi}^*)\Vert_{H^2(\Omega_R)}\leq C(\delta_{\mu}+\delta_{\nu}^2),\\
\Vert f_3(\cdot,\mathcal{S}^*,\mathcal{K}^*,\mathcal{U}^*,\mathcal{V}^*,\check{\Phi}^*)\Vert_{H^3(\Omega_R)}\leq C\delta_{\mu},
\end{cases}
\end{equation}
where the constant $C>0$ depends only on the background data.
\par Next, we  consider the following problem
\begin{equation}\label{extend}
\begin{cases}
(\hat r\phi_{r})_r+\frac{1}{\hat{r}}\phi_{\theta\theta}
=f_3,&{\rm{in}}\ \ \Omega_R,\\
\phi_r(0,\theta)=0, \ \ &{\rm{on}}\ \ \Gamma_{en},\\
\phi_r(R,\theta)=0, \ \ &{\rm{on}}\ \ \Gamma_{ex},\\
\phi(r,\pm \theta_0)=0, \ \ &{\rm{on}}\ \ \Gamma_w.
\end{cases}
\end{equation}
To deal with the singularity near the corner, one can use the standard symmetric extension technique to extend $\phi$ and $f_3$ as
\begin{equation*}
(\phi^e,f_3^e)(r,\theta)=
\begin{cases}
(\phi,f_3)(r,\theta) \quad &\hbox{for} \quad (r,\theta)\in [0,R]\times[-\theta_0,\theta_0],\\
-(\phi,f_3)(r,-\theta) \quad &\hbox{for} \quad (r,\theta)\in [0,R]\times(-3\theta_0,-\theta_0),\\
-(\phi,f_3)(r,2\theta_0-\theta) \quad &\hbox{for} \quad (r,\theta)\in [0,R]\times(\theta_0,3\theta_0).\\
\end{cases}
\end{equation*}
Then $\phi^e$ satisfies
\begin{equation*}
\begin{cases}
(\hat r\phi_{r}^e)_r+\frac{1}{\hat{r}}\phi_{\theta\theta}^e=f_3^e,\\
\phi^e(0,\theta)=\phi^e(R,\theta)=0,\\
\phi^e(r,-3\theta_0)=\phi^e(r,3\theta_0)=0.
\end{cases}
\end{equation*}
The standard elliptic theory in \cite{Gilbarg} implies that $\phi\in H^5(\Omega_R)$ satisfies the estimate
\begin{equation}\label{regu}
\Vert\phi\Vert_{H^5(\Omega_R)}\leq \dot{C}\Vert f_3\Vert_{H^3(\Omega_R)}
\end{equation}
and
\begin{equation}\label{regu11}
\frac{\partial^k\phi}{\partial\theta^k}=0 \quad \rm{for}\quad k=2,4\quad \rm{on}\quad \Gamma_{w}.
\end{equation}
Furthermore,for $\alpha\in(0,1)$, the Sobolev inequality yields that
\begin{equation}\label{regu12}
\Vert\phi\Vert_{C^{3,\alpha}(\overline{\Omega_R})}\leq \ddot{C}\Vert f_3\Vert_{H^3(\Omega_R)},
\end{equation}
provided $\dot{C}$ and $\ddot{C}$ are constants.
Define
\begin{equation*}
\check{\mathcal{U}}=\mathcal{U}+\frac{1}{\hat{r}}\phi_{\theta},\quad
\check{\mathcal{V}}=\mathcal{V}-\phi_r.
\end{equation*}
Then \eqref{elli1P} and \eqref{aBC1P} are transformed into
\begin{equation}\label{elli2}
\begin{cases}
\begin{aligned}
&L_1^{\A}(\check{\mathcal{U}},\check{\mathcal{V}},\check{\Phi})=\mathfrak{f}_1,\\
&L_2(\check{\mathcal{U}},\check{\Phi})=\mathfrak{f}_2,\\
&(\hat{r}\check{\mathcal{V}})_r-\check{\mathcal{U}}_{\theta}=0,\\
\end{aligned}
\end{cases}
\end{equation}
with the boundary conditions
\begin{equation}\label{aBC2}
\begin{cases}
(\check{\mathcal{U}},\check{\mathcal{V}},\check{\Phi}_r)(0,\theta)=
(U_{en}-U_0+\frac{1}{\hat{r}}\phi_{\theta},V_{en},-E_{en}+E_0)(0,\theta),\ \ &{\rm{on}}\ \ \Gamma_{en},\\
\check{\Phi}(R,\theta)=\Phi_{ex}(\theta)-\bar\Phi(R),\ \ &{\rm{on}}\ \ \Gamma_{ex},\\
\check{\mathcal{V}}(r,\pm\theta_0)=\check{\Phi}_{\theta}(r,\pm\theta_0)=0, \ \ &{\rm{on}}\ \ \Gamma_w,
\end{cases}
\end{equation}
where
\begin{equation*}
\begin{split}
\mathfrak{f}_1=&f_1(r,\theta;\mathcal{K}^*,\mathcal{U}^*,\mathcal{V}^*,\check{\Phi}^*)\\
&-a_{21}^{\A}\phi_{rr}
+\frac{1}{\hat{r}}a_{12}^{\A}\phi_{\theta\theta}+\left(\frac{1}{\hat{r}}a_{11}^{\A}-a_{22}^{\A}\right)\phi_{r\theta}
+\left(\frac{1}{\hat{r}^2}a_{11}^{\A}+\frac{1}{\hat{r}}\bar{a}_1\right)\phi_{\theta},\\
\mathfrak{f}_2=&f_2(r,\theta;\mathcal{S}^*,\mathcal{K}^*,\mathcal{U}^*,\mathcal{V}^*,\check{\Phi}^*)+\frac{1}{\hat{r}}\bar{c}_1\phi_{\theta}.
\end{split}
\end{equation*}
\par The third equation in \eqref{elli2} implies that there exists a potential function $\psi$ such that
\begin{equation*}
\psi_{\theta}=\hat{r}\check{\mathcal{V}},\quad \psi_{r}=\check{\mathcal{U}},
\quad \psi(0,-\theta_0)=0.
\end{equation*}
Then \eqref{elli2} and \eqref{aBC2} become
\begin{equation}\label{elli3}
\begin{cases}
\begin{aligned}
&\mathcal{L}_1^{\A}(\psi,\check{\Phi})=\mathfrak{f}_1,\\
&\mathcal{L}_2(\psi,\check{\Phi})=\mathfrak{f}_2,
\end{aligned}
\end{cases}
\end{equation}
and
\begin{equation}\label{aBC3}
\begin{cases}
(\psi_{r},\psi_{\theta},\check{\Phi}_r)(0,\theta)=(U_{en}-U_0+
\frac{1}{r_2}\phi_{\theta},r_2V_{en},-E_{en}+E_0)(0,\theta),\ \ &{\rm{on}}\ \ \Gamma_{en},\\
\check{\Phi}(R,\theta)=\Phi_{ex}(\theta)-\bar\Phi(R),\ \ &{\rm{on}}\ \ \Gamma_{ex},\\
\psi_{\theta}(r,\pm\theta_0)=\check{\Phi}_{\theta}(r,\pm\theta_0)=0, \ \ &{\rm{on}}\ \ \Gamma_w,
\end{cases}
\end{equation}
provided
\begin{equation*}
\begin{split}
&\mathcal{L}_1^{\A}(\psi,\check{\Phi})=
a_{11}^{\A}\psi_{rr}
+a_{22}^{\A}\psi_{\theta\theta}
+2a_{12}^{\A}\psi_{r\theta}
+\bar{a}_1\psi_{r}+ a_2^{\A} \psi_{\theta}+
\bar{b}_1\check{\Phi}+\bar{b}_2\check{\Phi}_r,\\
&\mathcal{L}_2(\psi,\check{\Phi})=(\hat r\check{\Phi}_{r})_r+\frac{1}{\hat{r}}\check{\Phi}_{\theta\theta}
+\bar{c}_1\psi_{r}+\bar{c}_2\check{\Phi},\\
&a_{11}^{\A}=\hat{a}_{11}^{\A},\quad a_{22}^{\A}=\frac{1}{\hat{r}}\hat{a}_{22}^{\A},\quad a_{12}^{\A}=\hat{a}_{12}^{\A},\quad
a_2^{\A}=\frac{1}{\hat{r}}\hat{a}_{12}^{\A}.
\end{split}
\end{equation*}
Set $\bar{a}_{ii}(r)=\hat{a}_{ii}(r,0,0,0,0)$ for $i=1,2$, $\bar{a}_{ij}(r)=\hat{a}_{ij}(r,0,0)$ for $i\neq j=1,2$. Then
\begin{equation}\label{coef11}
\begin{split}
\bar{a}_{11}(r)=1-\frac{\bar U^2}{\bar c^2},\quad
\bar{a}_{12}(r)=\bar{a}_{21}(r)=0,\quad
\bar{a}_{22}(r)=\frac{1}{\hat{r}^2}.
\end{split}
\end{equation}
\begin{lemma}\label{pr2}
Let $\mathcal{R}>0$ be given in Proposition \ref{pr1}.
\begin{enumerate}[ \rm (i)]
\item There exists a positive constant $\mu_0^{\prime}\in(0,1)$ depending only on the background data  such that
\begin{equation*}
-\frac{1}{\mu_0^{\prime}}\leq \bar{a}_{11}(r)\leq -\mu_0^{\prime} \quad \hbox{for} \
 r\in[0,\mathcal{R}].
\end{equation*}
 Furthermore, the coefficients $\bar{a}_{11}$, $\bar{a}_{22}$, $\bar{a}_1$, $\bar{b}_1$ and $\bar{b}_2$ are smooth functions in $[0,\mathcal{R}]$. More precisely, for each $k\in\mathbb{Z}^+$, there exists a constant $\bar C_k>0$ depending only on the background data and $k$ such that
\begin{equation*}
\Vert (\bar{a}_{11},\bar{a}_{22},\bar{a}_1,\bar b_1,\bar b_2)\Vert_{C^k([0,\mathcal{R}])}\leq\bar C_k.
\end{equation*}
\item
For each $\epsilon_0\in(0,\mathcal{R})$, there exists a constant $\delta_0>0$ such that, whenever $R\in(0,\mathcal{R}-\epsilon_0]$ and $\max\{\delta_{\mu},\delta_{\nu}\}\leq \delta_0$,  the following properties hold.
\begin{enumerate}[ \rm (a)]
\item
There exists a positive constant $K_1>0$ such that the coefficients $a^{\A}_{ij}$ $(i,j=1,2)$ satisfy
\begin{equation}\label{coef11-11}
\Vert(a^{\A}_{11}-\bar a_{11},a^{\A}_{12},a^{\A}_{2},a^{\A}_{22}-\bar a_{22})\Vert_{H^3(\Omega_R)}\leq K_1(\delta_{\mu}+\delta_{\nu}).
\end{equation}
Furthermore,
\begin{equation*}
a^{\A}_{12}=a^{\A}_{21}=a^{\A}_2=0 \quad \text{on} \quad \Gamma_{w}.
\end{equation*}
 \item There exist positive constants  $\kappa_0$ and $\kappa_1$ such that
\begin{equation}\label{coef11-22}
\begin{aligned}
-\frac{1}{\kappa_0}\leq a_{11}^{\A}\leq-\kappa_0,\quad
\kappa_1\leq a_{22}^{\A}\leq\frac{1}{\kappa_1}, \quad \hbox{in}\quad \overline{\Omega_R}.
\end{aligned}
\end{equation}
Here the constants  $\kappa_0$, $\kappa_1$ and $K_1$ depend only on the background data and $\epsilon_0$.
\end{enumerate}
\end{enumerate}

\end{lemma}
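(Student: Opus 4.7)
First I would invoke the formula \eqref{coef11}, which gives $\bar a_{11}(r) = 1 - \bar U^2/\bar c^2 = 1 - \bar M^2(r)$. By Proposition \ref{pr1}, $\bar M^2(0) > 1$ and $(\bar M^2)' > 0$ on $[0, \mathcal{R}]$, so $\bar M^2$ is strictly increasing and remains strictly above $1$ on this compact interval. Setting $\mu_0' := \min\{1/2,\, \bar M^2(0) - 1,\, (\bar M^2(\mathcal R) - 1)^{-1}\} \in (0,1)$ then yields the two-sided bound. For the smoothness and the $C^k$ estimates, I would note that the right-hand sides $h_1, h_2$ in \eqref{sreq1} are smooth on $[0, \mathcal R]$ (since $\hat r = r_2 - r \in [r_1, r_2]$ stays bounded away from zero), so $(\bar M^2, \bar E) \in C^\infty([0, \mathcal R])$, and hence so are $\bar U, \bar \rho, \bar P, \bar c$; the explicit formulas for $\bar a_{11}$, $\bar a_{22} = 1/\hat r^2$, $\bar a_1$, $\bar b_1$, $\bar b_2$ then yield the $C^k$ bounds on the compact interval.

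\textbf{Part (ii)(a).} My strategy is a first-order Taylor expansion in the perturbations about the background. Specifically, I would write
\[
a^{\A}_{11} - \bar a_{11} = \frac{1}{\bar c^2(r)} \int_0^1 \nabla_{(\mathcal K, \mathcal U, \mathcal V, \check \Phi)} A_{11}\bigl(\tau \mathcal K^*, \bar U + \tau \mathcal U^*, \tau \mathcal V^*, \bar \Phi + \tau \check \Phi^*\bigr) \cdot (\mathcal K^*, \mathcal U^*, \mathcal V^*, \check \Phi^*)\,d\tau,
\]
and analogously for the other four differences. For $\delta_0$ small enough, the Sobolev embedding $H^2(\Omega_R) \hookrightarrow L^\infty(\Omega_R)$ in two dimensions keeps all arguments inside a fixed compact neighborhood of $(0, \bar U, 0, \bar \Phi)$ on which $A_{ij}$ and its partial derivatives are smooth. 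Then, since $H^3(\Omega_R)$ is a Banach algebra in two dimensions and is stable under composition with smooth functions, the $H^3$ norm of each integrand factor is bounded by a constant depending only on the background data, yielding \eqref{coef11-11}. The vanishing $a^{\A}_{12} = a^{\A}_{21} = a^{\A}_2 = 0$ on $\Gamma_w$ is immediate from the pointwise factor $\mathcal V^*$ appearing in $A_{12} = -UV/\hat r$ and $A_{21} = -UV$, combined with the trace condition $\mathcal V^*|_{\Gamma_w} = 0$ built into the definition of $\mathcal{I}_{\delta_\nu, R}$ via its $k = 1$ case.

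\textbf{Part (ii)(b) and main obstacle.} For the pointwise bounds I would transfer (ii)(a) to $L^\infty$ via the embedding $H^3(\Omega_R) \hookrightarrow C^0(\overline{\Omega_R})$, obtaining $\Vert a^{\A}_{11} - \bar a_{11}\Vert_{L^\infty(\Omega_R)} \leq C(\delta_\mu + \delta_\nu)$, and then shrink $\delta_0$ so that this quantity is at most $\mu_0'/2$. Combined with the pointwise bound from (i), this gives the two-sided bound on $a^{\A}_{11}$ with $\kappa_0 = \mu_0'/2$; the analogous bound on $a^{\A}_{22}$ follows from $\bar a_{22} = 1/\hat r^2 \in [1/r_2^2, 1/r_1^2]$. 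The main technical obstacle I anticipate is the careful nonlinear $H^3$ composition estimate for the coefficients---cleanest in two dimensions via the Sobolev algebra property for $s > d/2$, but still requiring that the trace conditions on $\Gamma_w$ be preserved---together with uniform control of constants as $R$ varies; the role of $\epsilon_0 > 0$ is exactly to keep $R$ a definite distance away from $\mathcal R$, so that all constants depend only on the background data.
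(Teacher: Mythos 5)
Your argument is correct and is the standard one; the paper in fact states Lemma 3.3 without proof, treating it as routine, and your outline (monotonicity and supersonicity of $\bar M^2$ from Proposition \ref{pr1} for part (i), Taylor expansion plus the $H^3$ Banach-algebra and embedding properties for part (ii), and the factor $\mathcal{V}^*|_{\Gamma_w}=0$ from the iteration set for the vanishing of $a^{\A}_{12},a^{\A}_{21},a^{\A}_{2}$) is exactly what is needed. The only point worth flagging is that the Sobolev embedding constants for $\Omega_R=(0,R)\times(-\theta_0,\theta_0)$ degenerate as $R\to 0$ (the domain becomes a thin strip), so to get $K_1,\kappa_0,\kappa_1$ independent of $R$ as the lemma asserts one should either work with an $R$-uniform extension of the iterates or use the anisotropic one-dimensional embeddings in $r$ and $\theta$ separately; this is a minor bookkeeping issue, not a gap in the strategy.
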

\subsection{A priori estimates for the linearized system}\noindent

\par  For  each $R\in(0,\mathcal{R}-\epsilon_0]$ and $\delta\in(0,\delta_0]$ with $\delta_0>0$ given by Lemma \ref{pr2}, we investigate the linearized system \eqref{elli3} and \eqref{aBC3}. To simplify the boundary conditions, we define
\begin{equation*}
\begin{split}
\varphi=\psi-\mathfrak{g},\quad \Psi=\check{\Phi}-\Phi^*,\ \
\mathfrak{g}=\int_{-\theta_0}^{\theta}r_2V_{en}(s)\mathrm{d}s,\ \
\Phi^*=(r-R)(-E_{en}(\theta)+E_0)+ (\Phi_{ex}(\theta)-\bar \Phi(R)).
\end{split}
\end{equation*}
Then $(\psi,\check{\Phi})$ solves \eqref{elli3} and \eqref{aBC3} if and only if $(\varphi,\Psi)$ solves the equations
\begin{equation}\label{elli4}
\begin{cases}
\begin{aligned}
&\mathcal{L}_1^{\A}(\varphi,\Psi)=F_1,\\
&\mathcal{L}_2(\varphi,\Psi)=F_2,
\end{aligned}
\end{cases}
\end{equation}
with the boundary conditions
\begin{equation}\label{aBC4}
\begin{cases}
(\varphi_{r},\varphi,\Psi_r)(0,\theta)=(g(\theta),0,0), \ \ &{\rm{on}}\ \ \Gamma_{en},\\
\Psi(R,\theta)=0,\ \ &{\rm{on}}\ \ \Gamma_{ex},\\
\varphi_{\theta}(r,\pm\theta_0)=\Psi_{\theta}(r,\pm\theta_0)=0,\ \ &{\rm{on}} \ \ \Gamma_w,
\end{cases}
\end{equation}
provided
\begin{equation*}
\begin{split}
&F_1=\mathfrak{f}_1-r_2a_{22}^{\A}V_{en}'(\theta)-a_2^{\A}r_2V_{en}(\theta)-\bar b_1\Phi^*-\bar b_2(E_0-E_{en}(\theta)),\\
&F_2=\mathfrak{f}_2-\frac{1}{\hat{r}}\big((R-r)E_{en}''(\theta)
+\Phi_{ex}''(\theta)\big)+(E_0-E_{en}(\theta))-\bar c_2\Phi^*,\\
&g(\theta)=U_{en}-U_0+\frac{1}{r_2}\phi_{\theta}(0,\theta).
\end{split}
\end{equation*}
It follows from the expressions of $\mathfrak{f}_1$ and $\mathfrak{f}_2$ that
\begin{equation}\label{exf}
\Vert \mathfrak{f}_1\Vert_{H^3(\Omega_R)}+\Vert \mathfrak{f}_2\Vert_{H^2(\Omega_R)}\leq C(\delta_{\mu}+\delta_{\nu}^2+\sigma_1(b)),
\end{equation}
provided $C>0$ is a constant depending only on background data. Furthermore, $\mathfrak{f}_1$ and $\mathfrak{f}_2$ satisfy the compatibility conditions
\begin{equation}\label{coexf}
\partial_{\theta}\mathfrak{f}_1=\partial_{\theta}\mathfrak{f}_2=0\quad \rm{on}\quad \Gamma_{w}.
\end{equation}
This, together with \eqref{regu}, \eqref{regu11} and \eqref{regu12}, gives that
\begin{equation}\label{reguF1F2}
\begin{split}
&\Vert F_1\Vert_{H^3(\Omega_R)}+\Vert F_2\Vert_{H^2(\Omega_R)}+\Vert g\Vert_{H^3([-\theta_0,\theta_0])}\\
&\leq \Vert \mathfrak{f}_1\Vert_{H^3(\Omega_R)}+\Vert \mathfrak{f}_2\Vert_{H^2(\Omega_R)}+\sigma_2(U_{en}, V_{en}, E_{en}, \Phi_{ex})\\
&\leq(\delta_{\mu}+\delta_{\nu}^2+\sigma).
\end{split}
\end{equation}
Furthermore, it holds that
\begin{equation}\label{regug}
\begin{split}
g'(\pm\theta_0)=0,\quad \partial_{\theta}F_1=\partial_{\theta}F_2=0\quad \rm{on}\quad \Gamma_{w}.
\end{split}
\end{equation}
\par For fixed $(\mathcal{S}^*,\mathcal{K}^*)\in \mathcal{I}_{\delta_{\mu},R}$ and each $\bm{\mathcal{P}}\in \mathcal{I}_{\delta_{\nu},R}$, once the well-posedness of the boundary value problem \eqref{elli4}-\eqref{aBC4} is established, from which the well-posedness of \eqref{elli3}-\eqref{aBC3} follows.
\begin{proposition}\label{prop1}
Let   $\mathcal{R} $ and  $ \delta_0$  be given in  Proposition \ref{pr1} and  Lemma \ref{pr2}, respectively. For each $\epsilon_0\in(0,\frac{1}{10}\mathcal{R})$, there exist a constant $\bar R\in(0,\mathcal{R}-\epsilon_0]$  such that, for each  $R\in(0,\bar R)$ and $\max\{\delta_{\mu},\delta_{\nu}\}\leq \delta_0$, if $(\varphi,\Psi)\in C^2[\overline{\Omega_R})]^2$ solves the boundary value problem \eqref{elli4}-\eqref{aBC4}, then it satisfies
\begin{equation}\label{H^1}
\Vert(\varphi,\Psi)\Vert_{H^1(\Omega_R)}
\leq C\left(\Vert F_1\Vert_{L^2(\Omega_R)}+\Vert F_2\Vert_{L^2(\Omega_R)}
+\Vert g\Vert_{L^2([-\theta_0,\theta_0])}\right),
\end{equation}
where the constant $C$ and $\bar R$ depend only on the background data and $\epsilon_0$.
\end{proposition}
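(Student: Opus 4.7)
The plan is a coupled energy estimate: a sliced hyperbolic identity for $\varphi$ obtained from the first equation of \eqref{elli4}, and a weighted elliptic identity for $\Psi$ from the second, closed via the smallness of $R$. In Step~1 I would multiply $\mathcal{L}_1^{\A}(\varphi,\Psi)=F_1$ by $\varphi_r$ and integrate over the cross section $\{r\}\times(-\theta_0,\theta_0)$. Setting
\[
E(r)=\tfrac{1}{2}\int_{-\theta_0}^{\theta_0}\bigl(|a_{11}^{\A}|\varphi_r^2+a_{22}^{\A}\varphi_\theta^2\bigr)\,d\theta,
\]
the mixed terms $2a_{12}^{\A}\varphi_r\varphi_{r\theta}$ and $a_{22}^{\A}\varphi_r\varphi_{\theta\theta}$ integrate by parts in $\theta$ with all $\Gamma_w$-contributions vanishing, thanks to $\varphi_\theta=0$ on $\Gamma_w$ together with $a_{12}^{\A}=a_2^{\A}=0$ on $\Gamma_w$ from Lemma~\ref{pr2}. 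One obtains a differential inequality
\[
E'(r)\le CE(r)+C\int_{-\theta_0}^{\theta_0}(\Psi^2+\Psi_r^2)\,d\theta+C\|F_1(r,\cdot)\|_{L^2}^2.
\]
Since $\varphi(0,\cdot)=0$ implies $\varphi_\theta(0,\cdot)=0$, one has $E(0)\le C\|g\|_{L^2}^2$. Gronwall on $[0,R]$ and the Poincaré inequality $\|\varphi\|_{L^2(\Omega_R)}\le R\|\varphi_r\|_{L^2(\Omega_R)}$ give
\[
\|\varphi\|_{H^1(\Omega_R)}^2\le C\bigl(\|g\|_{L^2}^2+\|\Psi\|_{H^1(\Omega_R)}^2+\|F_1\|_{L^2(\Omega_R)}^2\bigr).
\]

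In Step~2 the delicate multiplier for $\mathcal{L}_2(\varphi,\Psi)=F_2$ is $\hat r\Psi$, matched to the polar volume element. A direct integration by parts yields
\[
\int_{\Omega_R}\hat r\Psi\Psi_{rr}=-\tfrac{1}{2}\int_{\Gamma_{en}}\Psi^2-\int_{\Omega_R}\hat r\Psi_r^2,\qquad \int_{\Omega_R}\hat r\Psi\cdot\bigl(-\tfrac{\Psi_r}{\hat r}\bigr)=\tfrac{1}{2}\int_{\Gamma_{en}}\Psi^2,
\]
so the two $\Gamma_{en}$-boundary traces cancel exactly using $\Psi_r|_{\Gamma_{en}}=0$, while $\Psi|_{\Gamma_{ex}}=0$ and $\Psi_\theta|_{\Gamma_w}=0$ kill the remaining boundary contributions. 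Because $\bar c_2=-\bar\rho/\bar c^2<0$, the resulting identity
\[
\int_{\Omega_R}\bigl(\hat r\Psi_r^2+\tfrac{1}{\hat r}\Psi_\theta^2+\hat r\tfrac{\bar\rho}{\bar c^2}\Psi^2\bigr)=\int_{\Omega_R}\hat r\,\bar c_1\Psi\varphi_r-\int_{\Omega_R}\hat r\Psi F_2
\]
has a left-hand side fully coercive in $\|\Psi\|_{H^1}^2$. Applying Cauchy--Schwarz, Young's inequality, and the Poincaré bound $\|\Psi\|_{L^2}\le R\|\Psi_r\|_{L^2}$ (from $\Psi|_{\Gamma_{ex}}=0$) yields
\[
\|\Psi\|_{H^1(\Omega_R)}^2\le CR^2\|\varphi_r\|_{L^2(\Omega_R)}^2+C\|F_2\|_{L^2(\Omega_R)}^2.
\]

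To close the loop in Step~3, substitute the Step~2 bound into that of Step~1:
\[
\|\varphi\|_{H^1(\Omega_R)}^2\le C\bigl(\|g\|_{L^2}^2+\|F_1\|_{L^2}^2+\|F_2\|_{L^2}^2\bigr)+C_0R^2\|\varphi\|_{H^1(\Omega_R)}^2,
\]
and choose $\bar R$ so small that $C_0\bar R^2\le 1/2$; absorbing the last term gives the desired bound on $\|\varphi\|_{H^1}$, and Step~2 then furnishes the matching bound on $\|\Psi\|_{H^1}$, which together give \eqref{H^1}.

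The hard part is Step~2: a naive multiplier $\Psi$ would leave a boundary trace $\tfrac{1}{2r_2}\int_{\Gamma_{en}}\Psi^2$ on the wrong side of the inequality together with a signed zeroth-order term $(\tfrac{1}{2\hat r^2}+\bar c_2)\Psi^2$ whose sign is not controllable within the convergent geometry $\hat r\in[r_1,r_2]$, and no amount of small-$R$ absorption could repair that defect. The weight $\hat r$ is precisely what cancels the $\Gamma_{en}$-trace and turns the zeroth-order coefficient into the unambiguously negative $\bar c_2$, restoring coercivity. A secondary point, essential already in Step~1, is that without the trace relation $a_{12}^{\A}|_{\Gamma_w}=0$ supplied by Lemma~\ref{pr2}, the $\Gamma_w$-boundary contribution of $2a_{12}^{\A}\varphi_r\varphi_{r\theta}$ would not vanish and Gronwall could not be applied slice by slice.
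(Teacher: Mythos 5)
Your argument is correct, but it is not the paper's argument. The paper treats the hyperbolic equation with a single global multiplier $G(r)\varphi_r$, where $G$ is an explicit positive solution of the Riccati equation $-G_r+2\mathfrak{p}^{\epsilon_0}G-\mathfrak{q}^{\epsilon_0}G^2-\mathfrak{h}^{\epsilon_0}=\lambda_0$; positivity of the resulting quadratic form (conditions \eqref{c1}--\eqref{c3}) is what determines $\bar R$. For the Poisson equation it uses the unweighted multiplier $\Psi$, so the zeroth-order coefficient becomes $h(r)=\frac{\bar\rho}{\bar c^2}-\frac{1}{2\hat r^{2}}$, and it is precisely the structural hypothesis \eqref{backco} (with $\gamma\neq 2$) that makes $h>0$ and hence the $\Psi$-energy coercive. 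You instead run a sliced Gronwall energy for $\varphi$ with the plain multiplier $\varphi_r$ — legitimate, since $a_{11}^{\A}<0<a_{22}^{\A}$ makes $r$ time-like, the $\Gamma_w$ contributions vanish by $\varphi_\theta=0$ together with $a_{12}^{\A}=a_{2}^{\A}=0$ on $\Gamma_w$ from Lemma \ref{pr2}, and the coefficients are $C^1$ by Morrey — and you test the Poisson equation with $\hat r\Psi$, which turns the zeroth-order coefficient into $\hat r\bar\rho/\bar c^{2}>0$ unconditionally and cancels the $\Gamma_{en}$ trace exactly; the coupling is then closed by the $O(R^2)$ gain from Poincar\'e. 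Your route is more elementary and, notably, never invokes \eqref{backco} in this step, at the price of leaning entirely on the smallness of $R$ for the final absorption; the paper's Riccati weight is what one would need to push $\bar R$ beyond a perturbative size, and it is where the stabilizing role of the electric field in the convergent geometry is actually visible. One correction to your closing commentary: with the unweighted multiplier the offending $\Gamma_{en}$ trace and the indefinite zeroth-order term are in fact absorbable for small $R$, since $\int_{\Gamma_{en}}\Psi^2\le R\Vert\Psi_r\Vert^2_{L^2(\Omega_R)}$ and $\Vert\Psi\Vert_{L^2(\Omega_R)}\le R\Vert\Psi_r\Vert_{L^2(\Omega_R)}$; the paper itself uses that multiplier and handles the sign of $h$ via \eqref{backco}, so the claim that ``no amount of small-$R$ absorption could repair that defect'' overstates the case.
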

\begin{proof}
\textbf{Step 1: Weighted energy.}
 For a smooth function $G:=G(r)$ to be determined later, set
\begin{equation*}
I_1(\varphi,\Psi,G):=\iint_{\Omega_R}\mathcal{L}^{\A}_1(\varphi,\Psi)G\varphi_r\mathrm{d}r\mathrm{d}\theta
=I_{11}+I_{12}+I_{13}+I_{14},\
I_2(\varphi,\Psi):=\iint_{\Omega_R}\mathcal{L}_2(\varphi,\Psi)\Psi\mathrm{d}r\mathrm{d}\theta.
\end{equation*}
Integration by parts yields
\begin{equation*}
\begin{aligned}
I_{11}:=&\iint_{\Omega_R}a_{11}^{\A}\varphi_{rr}G(r)\varphi_r\mathrm{d}r\mathrm{d}\theta\\
=&\int_{\Gamma_{ex}}\frac{a_{11}^{\A}G(r)}{2}\varphi^2_{r}\mathrm{d}\theta
-\int_{\Gamma_{en}}\frac{a_{11}^{\A}G(r)}{2}\varphi^2_{r}\mathrm{d}\theta
-\iint_{\Omega_R}\left(\frac{a_{11}^{\A}G(r)}{2}\right)_r\varphi^2_{r}\mathrm{d}r\mathrm{d}\theta,\\
I_{12}:=&\iint_{\Omega_R}2a_{12}^{\A}\varphi_{r\theta}G(r)\varphi_r\mathrm{d}r\mathrm{d}\theta
=-\iint_{\Omega_R}\left(a_{12}^{\A}G(r)\right)_{\theta}\varphi^2_{r}\mathrm{d}r
\mathrm{d}\theta,\\
I_{13}:=&\iint_{\Omega_R}a_{22}^{\A}\varphi_{\theta\theta}G(r)\varphi_r\mathrm{d}r\mathrm{d}\theta\\
=&-\iint_{\Omega_R}(a_{22}^{\A}G(r))_{\theta}\varphi_{r}\varphi_{\theta}\mathrm{d}r\mathrm{d}\theta
-\int_{\Gamma_{ex}}\frac{a_{22}^{\A}G(r)}{2}\varphi^2_{\theta}\mathrm{d}\theta
+\iint_{\Omega_R}\left(\frac{a_{22}^{\A}G(r)}{2}\right)_r\varphi^2_{\theta}
\mathrm{d}r\mathrm{d}\theta,\\
I_{14}:=&\iint_{\Omega_R}\left(\bar a_1 G(r)\varphi_r^2+a_2^{\A}G(r)\varphi_r\varphi_{\theta}+\bar b_1 G(r)\varphi_r\Psi+\bar b_2 G(r)\varphi_r\Psi_r\right)\mathrm{d}r\mathrm{d}\theta.
\end{aligned}
\end{equation*}
On the other hand,
\begin{align*}
I_2(\varphi,\Psi)=&\iint_{\Omega_R}\left((\hat r\Psi_{r})_r+\frac{1}{\hat{r}}\Psi_{\theta\theta}
+\bar c_1\varphi_r+\bar c_2\Psi\right)\Psi\mathrm{d}r\mathrm{d}\theta\\
=&-\iint_{\Omega_R}\hat r\Psi_r^2\mathrm{d}r\mathrm{d}\theta
-\iint_{\Omega_R}\frac{1}{\hat{r}}\Psi_{\theta}^2\mathrm{d}r\mathrm{d}\theta
+\iint_{\Omega_R}\bar c_1\varphi_r\Psi\mathrm{d}r\mathrm{d}\theta
+\iint_{\Omega_R}\bar c_2\Psi^2\mathrm{d}r\mathrm{d}\theta.\\
\end{align*}
Hence
\begin{equation}\label{I_1+I_2}
-(I_1+I_2)
=J_{bc}(\varphi,\Psi,G)+J_c(\varphi,\Psi,G)+J_{p}(\varphi,\Psi,G),
\end{equation}
where
\begin{equation*}
\begin{split}
J_{bd}&:=-\int_{\Gamma_{ex}}\frac{a_{11}^{\A}G}{2}\varphi^2_{r}\mathrm{d}\theta
+\int_{\Gamma_{en}}\frac{a_{11}^{\A}G}{2}\varphi^2_{r}\mathrm{d}\theta
+\int_{\Gamma_{ex}}\frac{a_{22}^{\A}G}{2}\varphi^2_{\theta}\mathrm{d}\theta,\\
J_c&:=\iint_{\Omega_R}\big(\partial_{\theta}a_{22}^{\A}-a_2^{\A}\big)G(r)\varphi_{r}\varphi_{\theta}\mathrm{d}r\mathrm{d}\theta
-\iint_{\Omega_R}\bar b_1G(r)\varphi_{r}\Psi\mathrm{d}r\mathrm{d}\theta
-\iint_{\Omega_R}\bar c_1\varphi_r\Psi\mathrm{d}r\mathrm{d}\theta\\
&\quad\ \ -\iint_{\Omega_R}\bar b_2G(r)\varphi_{r}\Psi_r\mathrm{d}r\mathrm{d}\theta,\\
J_p&:=\iint_{\Omega_R}q_1(G,r,\theta)\varphi^2_{r}\mathrm{d}r\mathrm{d}\theta
+\iint_{\Omega_R}q_2(G,r,\theta)\varphi^2_{\theta}\mathrm{d}r\mathrm{d}\theta
+\iint_{\Omega_R}h(r)\Psi^2\mathrm{d}r\mathrm{d}\theta\\
&\quad\ \ +\iint_{\Omega_R}\left(\Psi^2_r+\frac{1}{\hat{r}^2}\Psi_{\theta}^2\right)\mathrm{d}r\mathrm{d}\theta,
\end{split}
\end{equation*}
and
\begin{equation*}
\begin{split}
&q_1(G,r,\theta):=\bigg(\frac{a_{11}^{\A}G(r)}{2}\bigg)_r+\big(a_{12}^{\A}G(r)\big)_{\theta}-\bar a_1G(r),\\
&q_2(G,r,\theta):=-\bigg(\frac{a_{22}^{\A}G(r)}{2}\bigg)_r,\quad h(r):=-\bar c_2.
\end{split}
\end{equation*}
\textbf{Step 2: A priori estimate for $(\varphi,\Psi)$.}
Note that
\begin{equation}
h(r)=\frac{\hat r\bar\rho}{\bar c^2}>0.
\end{equation}
Then for each $R\in(0,\mathcal{R}-\epsilon_0]$,
\begin{equation*}
\mu_{R}:=\inf_{r\in[0,R]} h(r)
\end{equation*}
is a strictly positive constant depending only on background data. It follows from the Cauchy-Schwarz inequality and Lemma \ref{pr2} that
\begin{equation*}
\begin{split}
\vert J_c(\varphi,\Psi,G)\vert
&\leq\iint_{\Omega_R}\bigg(\frac{\hat r}{8}\Psi_r^2+\frac{\mu_{R}}{4}\Psi^2
+q_3(G,r,\theta)\varphi_r^2
+K_*\delta\varphi_{\theta}^2\bigg)\mathrm{d}r\mathrm{d}\theta,
\end{split}
\end{equation*}
provided the constant $K_*>0$ and
\begin{equation*}
q_3(G,r):=2\bigg(\Big(\frac{\bar{b}_2^2}{\hat r}+\frac{\bar{b}_1^2}{\mu_{R}}
+\frac{K_*\delta}{8}\Big)G^2+\frac{\bar{c}_1^2}{\mu_{R}}\bigg).
\end{equation*}
Employing \eqref{coef11-22} and \eqref{I_1+I_2} yields
\begin{equation}\label{I}
\begin{split}
-(I_1+I_2)\geq&
\iint_{\Omega_R}\left(q_1(G,r,\theta)-q_3(G,r)\right)\varphi^2_{r}\mathrm{d}r\mathrm{d}\theta\\
&+\iint_{\Omega_R}\left(q_2(G,r,\theta)-K_*\delta\right)\varphi^2_{\theta}\mathrm{d}r\mathrm{d}\theta\\
&+\iint_{\Omega_R}\frac{3\mu_{R}}{4}\Psi^2\mathrm{d}r\mathrm{d}\theta
+\iint_{\Omega_R}\left(\frac{7\hat r}{8}\Psi^2_r+\frac{1}{\hat{r}}\Psi_{\theta}^2\right)\mathrm{d}r\mathrm{d}\theta\\
&+\min\{\kappa_0,\kappa_1\}
\int_{\Gamma_{ex}}\frac{G}{2}\left(\varphi^2_{r}+\varphi^2_{\theta}\right)\mathrm{d}
\theta
-\int_{\Gamma_{en}}\frac{G}{2\kappa_0}g^2\mathrm{d}\theta,
\end{split}
\end{equation}
provided that the function $G(R)\geq 0$. In order to derive the $H^1$ estimate, $(G,R,\delta)$ should be chosen such that
\begin{equation}\label{c1}
G(r)>0 \quad \hbox{for} \quad 0\leq r\leq R,
\end{equation}
\begin{equation}\label{c2}
-\frac{2}{a_{11}^{\A}}\left(q_1(G,r,\theta)-q_3(G,r)\right)\geq\lambda_0, \quad \hbox{in} \quad \overline{\Omega_R},
\end{equation}
\begin{equation}\label{c3}
\frac{2}{a_{22}^{\A}}\left(q_2(G,r,\theta)-K_*\delta\right)\geq\lambda_0, \quad \hbox{in} \quad \overline{\Omega_R}
\end{equation}
for some constant $\lambda_0>0$. With the aid of \eqref{coef11-22} and \eqref{I}, we deduce that
\begin{equation}\label{II}
\begin{split}
-(I_1+I_2)\geq&
\frac{\lambda_0}{2}\iint_{\Omega_R}\left(\kappa_0\varphi^2_{r}
+\kappa_1\varphi^2_{\theta}\right)\mathrm{d}r\mathrm{d}\theta
+\frac{3\mu_{R}}{4}\iint_{\Omega_R}\Psi^2\mathrm{d}r\mathrm{d}\theta\\
&+\iint_{\Omega_R}\left(\frac{7\hat r}{8}\Psi^2_r+\frac{1}{\hat{r}}\Psi_{\theta}^2\right)\mathrm{d}r\mathrm{d}\theta
-\int_{\Gamma_{en}}\frac{G}{2\kappa_0}g^2\mathrm{d}\theta.
\end{split}
\end{equation}
Since the functions $\varphi$ and $\Psi$ satisfy the problem \eqref{elli4}-\eqref{aBC4}, then we obtain
\begin{equation*}
\begin{split}
-(I_1+I_2)&=-\iint_{\Omega_R}\bigg(G(r)\varphi_rF_1
+\Psi F_2\bigg)\mathrm{d}r\mathrm{d}\theta\\
&\leq\iint_{\Omega_R}\bigg(\frac{\lambda_0\kappa_0}{4}\varphi^2_r
+\frac{1}{\lambda_0\kappa_0}\vert GF_1\vert^2
+\frac{\mu_{R}}{4}\Psi^2+\frac{1}{\mu_{R}}\vert F_2\vert^2\bigg)\mathrm{d}r\mathrm{d}\theta,
\end{split}
\end{equation*}
here the Cauchy-Schwarz inequality is  utilized.
This, together with \eqref{II}, yields
\begin{equation*}
\begin{split}
&\min\{\kappa_0,\kappa_1\}\frac{\lambda_0}{4}\iint_{\Omega_R}\left(\varphi^2_{r}
+\varphi^2_{\theta}\right)\mathrm{d}r\mathrm{d}\theta
+\frac{\mu_{R}}{2}\iint_{\Omega_R}\Psi^2\mathrm{d}r\mathrm{d}\theta
+\iint_{\Omega_R}\left(\frac{7\hat r}{8}\Psi^2_r+\frac{1}{\hat{r}}\Psi_{\theta}^2\right)
\mathrm{d}r\mathrm{d}\theta\\
&\leq\frac{1}{\lambda_0\kappa_0}\iint_{\Omega_R}\vert GF_1\vert^2\mathrm{d}r\mathrm{d}\theta
+\frac{1}{\mu_{R}}\iint_{\Omega_R}\vert F_2\vert^2\mathrm{d}r\mathrm{d}\theta
+\int_{\Gamma_{en}}\frac{G}{2\kappa_0}g^2\mathrm{d}\theta.
\end{split}
\end{equation*}
Consequently, the estimate \eqref{H^1} follows from the boundary condition $\varphi=0$ on $\Gamma_{en}$ and the Poincar$\acute{\mathrm{e}}$ inequality. \\
\textbf{Step 3: Construction of $G$.}
Set
\begin{equation*}
\mu_{R_*}=\inf_{r\in[0,R_*]} h(r) \quad \hbox{for} \quad R_*=\mathcal{R}-\epsilon_0.
\end{equation*}
Therefore,
\begin{equation}\label{mu_1}
\mu_{R}\geq\mu_{R_*}>0 \quad \hbox{for } \quad R\in(0,R_*].
\end{equation}
By \eqref{coef11-22} and \eqref{mu_1}, for any $\delta\in(0,\delta_0]$, $R\in(0,\mathcal{R}-\epsilon_0]$, it holds that
\begin{equation*}
\begin{split}
-\frac{2}{a_{11}^{\A}}\bigg(q_1(G,r,\theta)-q_3(G,r)\bigg)
\geq&
-G_r-2\cdot\bigg(\frac{\partial_ra_{11}^{\A}}{2a_{11}^{\A}}
+\frac{\partial_{\theta}a_{12}^{\A}}{a_{11}^{\A}}-\frac{\bar{a}_1}{a_{11}^{\A}}\bigg)G\\
&-\frac{4}{\kappa_0}\bigg(\frac{\bar{b}_2^2}{\hat r}+\frac{\bar{b}_1^2}{\mu_{R_*}}+
\frac{K_*\delta}{8}\bigg)G^2
-\frac{4\bar{c}_1^2}{\kappa_0\mu_{R_*}}
\end{split}
\end{equation*}
and
\begin{equation*}
\frac{2}{a_{22}^{\A}}\left(q_2(G,r,\theta)-K_*\delta\right)\geq
-G_r-2\cdot\frac{\partial_ra_{22}^{\A}}{2a_{22}^{\A}}G-\frac{2K_*\delta}{\kappa_1}.
\end{equation*}
Define
\begin{equation*}
\begin{split}
\begin{aligned}
\mathfrak{h}^{\epsilon_0}:=&\frac{2K_*\delta_1}{\kappa_1}
+\max_{(0,R_*]}\frac{4\bar{c}_1^2}{\kappa_0\mu_{R_*}},\quad
\mathfrak{q}^{\epsilon_0}:=\max_{(0,R_*]}\frac{4}{\kappa_0}
\left(\frac{\bar{b}_2^2}{\hat r}+\frac{\bar{b}_1^2}{\mu_{R_*}}+\frac{K_*\delta_1}{8}\right),\\
\mathfrak{p}_{1}^{\epsilon_0}:=&\min_{(0,R_*]}
\left(-\frac{\partial_r\bar{a}_{11}}{2\bar{a}_{11}}+\frac{\bar{a}_1}{\bar{a}_{11}}\right),\quad
\mathfrak{p}_{2}^{\epsilon_0}:=\min_{(0,R_*]}
\bigg(-\frac{\partial_r\bar a_{22}}{2\bar a_{22}}\bigg),\quad
\bar{\mathfrak{p}}^{\epsilon_0}:=\min\{\mathfrak{p}_{1}^{\epsilon_0},\mathfrak{p}_{2}^{\epsilon_0}\}.
\end{aligned}
\end{split}
\end{equation*}
It follows from Proposition  \ref{pr1} that there exists a constant $C>0$ such that
\begin{equation*}
0<\mathfrak{h}^{\epsilon_0},\mathfrak{q}^{\epsilon_0}\leq C \quad \hbox{and} \quad \vert \mathfrak{p}^{\epsilon_0}\vert\leq C.
\end{equation*}
For fixed $(\mathcal{S}^*,\mathcal{K}^*)\in \mathcal{I}_{\delta_{\mu},R}$ and each $\bm{\mathcal{P}}=(\mathcal{U}^*,\mathcal{V}^*,\check{\Phi}^*)\in \mathcal{I}_{\delta_{\nu},R}$, set
\begin{equation*}
\begin{split}
\mathfrak{p}_{1}^{\A}:=\min_{(0,R_*]}
\bigg(-\frac{\partial_ra_{11}^{\A}}{2a_{11}^{\A}}-
\frac{\partial_{\theta}a_{12}^{\A}}{a_{11}^{\A}}+\frac{\bar{a}_1}{a_{11}^{\A}}\bigg),\quad
\mathfrak{p}_{2}^{\A}:=\min_{(0,R_*]}
\bigg(-\frac{\partial_ra_{22}^{\A}}{2a_{22}^{\A}}\bigg),\quad
\bar{\mathfrak{p}}^{\A}:=\min\{\mathfrak{p}_{1}^{\A},\mathfrak{p}_{2}^{\A}\},
\end{split}
\end{equation*}
provided $\max\{\delta_{\mu},\delta_{\nu}\}\leq \delta^*$, $R\in(0,R_*]$.
Lemma \ref{pr2} and the Morrey's inequality give
\begin{equation*}
\mathfrak{p}^{\A}\geq \bar{\mathfrak{p}}^{\epsilon_0}-\frac{\tilde C\delta_1}{\kappa_0}:=\mathfrak{p}^{\epsilon_0}
\end{equation*}
for a constant $\tilde C>0$. The conditions \eqref{c1}-\eqref{c3} are satisfied as long as $G>0$ and solves the ODE
\begin{equation}\label{ODE}
-G_r+2\mathfrak{p}^{\epsilon_0}G-\mathfrak{q}^{\epsilon_0}G^2-\mathfrak{h}^{\epsilon_0}=\lambda_0 \quad \hbox{for} \quad 0<r<R.
\end{equation}
The explicit construction of $G$ is given in Proposition 2.4 of \cite{Bae1}. For convenience, we present the solution to this ODE directly.
If $\mathfrak{h}^{\epsilon_0}\mathfrak{q}^{\epsilon_0}-(\mathfrak{p}^{\epsilon_0})^2>0$, $G$ is given by
\begin{equation*}
G(r;\lambda_0)=\chi(\lambda_0)\tan(\frac{\pi}{2}-\lambda_0-
\mathfrak{q}^{\epsilon_0}\chi(\lambda_0)r)+\frac{\mathfrak{p}^{\epsilon_0}}
{\mathfrak{q}^{\epsilon_0}}.
\end{equation*}
Set
\begin{equation*}
\bar R:=\min\left\{R_*,\sup_{\lambda_0\in(0,\lambda_0^*]}\frac{1}{\mathfrak{q}^{\epsilon_0}\chi(\lambda_0)}\left(\frac{\pi}{2}
-\lambda_0-\arctan\left(\frac{\vert \mathfrak{p}^{\epsilon_0}\vert}{\mathfrak{q}^{\epsilon_0}\chi(\lambda_0)}\right)
\right)\right\},
\end{equation*}
where $\lambda_0^*>0$ ia a small constant. For any $R\in(0,\bar R)$, there exists a constant $\lambda_0\in(0,\lambda_0^*]$ such that the conditions \eqref{c1}-\eqref{c3} hold.
If $\mathfrak{h}^{\epsilon_0}\mathfrak{q}^{\epsilon_0}-(\mathfrak{p}^{\epsilon_0})^2<0$, $G$ is given by
\begin{equation*}
G(r;\lambda_0)=\xi(\lambda_0)^{\frac{1}{2}}\cot\left(\xi(\lambda_0)
+\mathfrak{q}^{\epsilon_0}\xi(\lambda_0)^{\frac{1}{2}}r\right)+\frac{\mathfrak{p}^{\epsilon_0}}{\mathfrak{q}^{\epsilon_0}},
\end{equation*}
where
\begin{equation*}
\xi(\lambda_0):=\frac{{\lambda_0-z_*}}{\mathfrak{q}^{\epsilon_0}},\quad z_*=\frac{(\mathfrak{p}^{\epsilon_0})^2-\mathfrak{h}^{\epsilon_0}\mathfrak{q}^{\epsilon_0}}{\mathfrak{q}^{\epsilon_0}}\geq0
\end{equation*}
and $\lambda_0>z_*$ is chosen such that $\xi(\lambda_0)>0$. $G(r;\lambda_0)>0$ for all $r\in[0,R]$ as long as
\begin{equation*}
\frac{1}{R}+\mathfrak{p}^{\epsilon_0}>0 \quad \hbox{and} \quad  R<R_*.
\end{equation*}
Therefore, $G(r)$ depends only on the background data and $\epsilon_0$. This completes the proof of Proposition \ref{prop1}.
\end{proof}

\begin{proposition}\label{prop2}
Fix $\epsilon_0\in(0,\mathcal{R})$. Let $\bar R\in(0,\mathcal{R}-\epsilon_0]$ be given in Proposition \ref{prop1}. Then for $R\in(0,\bar R)$ and $\max\{\delta_{\mu},\delta_{\nu}\}\leq \delta_0$, there exists a constant $C$ depending only on the background data and $\epsilon_0$ such that the boundary value problem \eqref{elli4}-\eqref{aBC4} associated with $(\mathcal{S}^*,\mathcal{K}^*)\in \mathcal{I}_{\delta_{\mu},R}$ and $(\mathcal{U}^*,\mathcal{V}^*,\check{\Phi}^*)\in \mathcal{I}_{\delta_{\nu},R}$ has a unique solution $(\varphi,\Psi)\in [H^4(\Omega_R)]^2$ satisfying
\begin{equation}\label{H^4_1}
\Vert\varphi\Vert_{H^4(\Omega_R)}
\leq C\left(\Vert F_1\Vert_{H^3(\Omega_R)}+\Vert F_2\Vert_{H^2(\Omega_R)}
+\Vert g\Vert_{H^3([-\theta_0,\theta_0])}\right)
\end{equation}
and
\begin{equation}\label{H^4_2}
\Vert\Psi\Vert_{H^4(\Omega_R)}
\leq C\left(\Vert F_1\Vert_{H^2(\Omega_R)}+\Vert F_2\Vert_{H^2(\Omega_R)}
+\Vert g\Vert_{H^2([-\theta_0,\theta_0])}\right).
\end{equation}
Furthermore, the solution $(\varphi,\Psi)$ satisfies the compatibility conditions
\begin{equation*}
\frac{\partial^k\varphi}{\partial\theta^k}=\frac{\partial^k\Psi}{\partial\theta^k}=0 \quad \hbox{on} \quad \Gamma_{w} \quad \hbox{for} \quad k=1,3.
\end{equation*}
\end{proposition}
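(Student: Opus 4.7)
The plan is to combine the a priori $H^1$ estimate of Proposition \ref{prop1} with tangential differentiation in $\theta$ and algebraic use of the equations to bootstrap up to $H^4$, together with a Galerkin construction for existence. The coupling between the hyperbolic operator $\mathcal{L}_1^{\A}$ and the elliptic operator $\mathcal{L}_2$ is only through lower-order terms ($\bar b_1 \Psi, \bar b_2 \Psi_r$ in $\mathcal{L}_1^{\A}$ and $\bar c_1 \varphi_r$ in $\mathcal{L}_2$), so standard hyperbolic and elliptic techniques apply modulo these perturbations.

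For existence, I would expand $\varphi$ and $\Psi$ in the Neumann eigenbasis of $-\partial_\theta^2$ on $[-\theta_0,\theta_0]$, which automatically encodes the conditions $\varphi_\theta=\Psi_\theta=0$ on $\Gamma_w$. At each truncation order $N$, this reduces \eqref{elli4}-\eqref{aBC4} to a coupled finite-dimensional ODE-boundary value system in $r$, solvable by standard theory. The bound \eqref{H^1} is uniform in $N$ by linearity, so weak compactness yields a distributional limit solving the full problem. Uniqueness follows at once from Proposition \ref{prop1} applied to the difference of two solutions.

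For the $H^4$ bounds, I would differentiate the system tangentially in $\theta$. The structure is preserved because the unperturbed coefficients $\bar a_{11},\bar a_{22},\bar a_1,\bar b_1,\bar b_2,\bar c_1,\bar c_2$ depend only on $r$, while $a_{12}^{\A}, a_{21}^{\A}, a_2^{\A}$ vanish on $\Gamma_w$ by Lemma \ref{pr2} and $\partial_\theta F_1=\partial_\theta F_2=0$ on $\Gamma_w$, $g'(\pm\theta_0)=0$ by \eqref{regug}. To handle the parity mismatch when reapplying Proposition \ref{prop1} to $\theta$-derivatives, I would use the even symmetric extension in $\theta$ across $\Gamma_w$, which is compatible with the Neumann-type conditions here (in contrast to the odd extension \eqref{extend} used for the Dirichlet function $\phi$) and turns the lateral boundary into the interior of a wider strip. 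Successive application of the $H^1$ estimate on the extended problem yields tangential $L^2$ control on $\partial_\theta^k \varphi, \partial_\theta^k \Psi$ for $k\leq 4$. Since $a_{11}^{\A}\leq -\kappa_0<0$, the hyperbolic equation solves algebraically for $\varphi_{rr}$ and the elliptic equation for $\Psi_{rr}$; iterating these identities converts tangential bounds into bounds on all mixed derivatives of order $\leq 4$. The asymmetry between \eqref{H^4_1} and \eqref{H^4_2}, namely that $\|\Psi\|_{H^4}$ requires only $H^2$ control of $F_1$, reflects that $\Psi$ receives $\varphi$ only through the first-order term $\bar c_1\varphi_r$: elliptic regularity for $\mathcal{L}_2$ delivers $\Psi\in H^4$ as soon as $\varphi\in H^3$, which needs $F_1\in H^2$, $F_2\in H^2$, and $g\in H^2$.

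The compatibility $\partial_\theta^k\varphi=\partial_\theta^k\Psi=0$ on $\Gamma_w$ is built into \eqref{aBC4} for $k=1$, while for $k=3$ it follows by evaluating the twice-$\theta$-differentiated equations on $\Gamma_w$ and using the vanishing of $a_{12}^{\A}, a_2^{\A}, \partial_\theta\varphi, \partial_\theta\Psi, \partial_\theta F_1, \partial_\theta F_2$ there. The main obstacle I anticipate is uniform regularity up to the four corners $\{(0,\pm\theta_0),(R,\pm\theta_0)\}$, where boundary conditions of different types meet; the symmetric extension device reduces this to corner regularity of a hyperbolic-elliptic system on an enlarged strip whose only boundaries are on $\{r=0\}$ and $\{r=R\}$ and carry Cauchy/Dirichlet data, so the compatibility of the entrance and exit data (inherited from \eqref{compatibility conditions} and recorded in \eqref{regug}) suffices for the $H^4$ estimate via standard corner-regularity theory for mixed boundary value problems.
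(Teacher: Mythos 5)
Your proposal follows essentially the same route as the paper: Galerkin approximation in the Neumann cosine eigenbasis, the a priori $H^1$ estimate of Proposition \ref{prop1} for uniform bounds and uniqueness, and tangential $\theta$-differentiation combined with algebraic recovery of $\varphi_{rr}$ and $\Psi_{rr}$ (exploiting $a_{11}^{\A}\leq-\kappa_0<0$ and the vanishing of $a_{12}^{\A},a_2^{\A}$ on $\Gamma_w$) for the $H^4$ bounds, which is precisely the content of the cited Lemma \ref{3.4}. The only points the paper treats more carefully are a preliminary mollification of the $H^3$ coefficients (with a final limit $\varrho\to 0$) and the solvability of the finite-dimensional two-point ODE boundary value problem, which is not automatic but follows from the Fredholm alternative once the uniform a priori estimate excludes the kernel.
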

\begin{proof}
\textbf{Step 1: Approximated linear boundary value problem.}
It follows from Lemma \ref{pr2}, \eqref{reguF1F2} and \eqref{regug} that $(a_{11}^{\A},a_{12}^{\A},a_{22}^{\A},a_{2}^{\A},F_1,F_2)\in\big(H^3(\Omega_R)\big)^5\times H^2(\Omega_R)$ satisfying
\begin{equation*}
a_{12}^{\A}=a_{2}^{\A}=\partial_{\theta}a_{11}^{\A}=\partial_{\theta}a_{22}^{\A}=\partial_{\theta}F_1=\partial_{\theta}F_2=0\quad\rm{on}\quad\Gamma_{w}.
\end{equation*}
Then $(a_{11}^{\A},a_{12}^{\A},a_{22}^{\A},a_{2}^{\A},F_1,F_2)$ can be extended into $\Omega_R^{\mathfrak{e}}:=(0,R)\times(-3\theta_0,3\theta_0)$ by
\begin{equation*}
\begin{aligned}
&(a_{ii}^{\mathfrak{e}},a_{12}^{\mathfrak{e}},a_{2}^{\mathfrak{e}},F_i^{\mathfrak{e}})(r,\theta)=
&\begin{cases}
(a_{ii}^{\A},a_{12}^{\A},a_{2}^{\A},F_i)(r,\theta) \quad &\hbox{for}\quad (r,\theta)\in(0,R) \times [-\theta_0, \theta_0], \\
(a_{ii}^{\A},-a_{12}^{\A},-a_{2}^{\A},F_i)(r,-2\theta_0-\theta) \quad &\hbox{for}\quad (r,\theta)\in(0,R) \times [-3\theta_0, \theta_0], \\
(a_{ii}^{\A},-a_{12}^{\A},-a_{2}^{\A},F_i)(r,2\theta_0-\theta) \quad &\hbox{for}\quad (r,\theta)\in(0,R) \times [\theta_0, 3\theta_0].
\end{cases}
\end{aligned}
\end{equation*}
with $i=1,2$.
One can obtain that $(a_{11}^{\mathfrak{e}},a_{12}^{\mathfrak{e}},a_{22}^{\mathfrak{e}},a_{2}^{\mathfrak{e}},F_1^{\mathfrak{e}},F_2^{\mathfrak{e}})
\in\big(H^3(\Omega_R^{\mathfrak{e}})\big)^5\times H^2(\Omega_R^{\mathfrak{e}})$. Let $\tau_\varrho$ is a radially symmetric standard mollifier with a compact support in a disk of radius $\varrho>0$. For $(r,\theta)\in\Omega_R^{\mathfrak{e}}$, define
\begin{equation*}
a_{ij}^\varrho=a_{ij}^{\mathfrak{e}}\ast\tau_\varrho,\quad a_{2}^\varrho=a_{2}^{\mathfrak{e}}\ast\tau_\varrho,\quad F_i^\varrho=F_i^{\mathfrak{e}}\ast\tau_\varrho\quad \hbox{for}\quad i,j=1,2
\quad \hbox{and}\quad i\leq j.
\end{equation*}
Hence $(a_{11}^\varrho,a_{12}^\varrho,a_{22}^\varrho,a_{2}^\varrho, F_1^\varrho,F_2^\varrho)\subset C^{\infty}(\overline{\Omega_R})$ and
\begin{equation*}
a_{12}^\varrho=a_{2}^\varrho=\partial_{\theta}a_{11}^\varrho=\partial_{\theta}a_{22}^\varrho
=\partial_{\theta}F_1^\varrho=\partial_{\theta}F_2^\varrho=0
\quad\hbox{on}\quad \Gamma_{w}.
\end{equation*}
Furthermore, as $\varrho\to 0$, we have
\begin{equation*}
\begin{split}
&\Vert a_{ij}^\varrho-a_{ij}^{\A}\Vert_{H^3(\Omega_R)}\to 0,\quad \Vert a_{2}^\varrho-a_{2}^{\A}\Vert_{H^3(\Omega_R)}\to 0,\\
&\Vert F_1^\varrho-F_{1}\Vert_{H^3(\Omega_R)}\to 0,\quad
\Vert F_2^\varrho-F_{2}\Vert_{H^2(\Omega_R)}\to 0
\quad\hbox{for}\quad i,j=1,2
\quad \hbox{and}\quad i\leq j.
\end{split}
\end{equation*}
Then a approximation problem with smooth coefficients of \eqref{elli4}-\eqref{aBC4} is obtained as follows:
\begin{equation}\label{Leqs3}
\begin{cases}
\mathcal{L}^\varrho_1(\varphi,\Psi)=F_1^\varrho,\\
\mathcal{L}_2(\varphi,\Psi)=F_2^\varrho,
\end{cases}
\end{equation}
with the boundary conditions
\begin{equation}\label{Lbcs3}
\begin{cases}
(\varphi_{r},\varphi,\Psi_r)(0,\theta)=(g(\theta),0,0), \ \ &{\rm{on}}\ \ \Gamma_{en},\\
\Psi(R,\theta)=0,\ \ &{\rm{on}}\ \ \Gamma_{ex},\\
\varphi_{\theta}(r,\pm\theta_0)=\Psi_{\theta}(r,\pm\theta_0)=0, \ \ &{\rm{on}} \ \ \Gamma_w,
\end{cases}
\end{equation}
provided
\begin{equation*}
\begin{split}
\mathcal{L}^\varrho_1(\varphi,\Psi)=&
a_{11}^\varrho\varphi_{rr}
+a_{22}^\varrho\varphi_{\theta\theta}
+2a_{12}^\varrho\varphi_{r\theta}
+\bar{a}_1\varphi_{r}+ a_2^\varrho \varphi_{\theta}+
\bar{b}_1\Psi+\bar{b}_2\Psi_r,\\
\mathcal{L}_2(\varphi,\Psi)=&(\hat r\Psi_{r})_r+\frac{1}{\hat{r}}\Psi_{\theta\theta}
+\bar{c}_1\varphi_{r}+\bar{c}_2\Psi.
\end{split}
\end{equation*}
\textbf{Step 2: Galerkin approximation.}
For the open interval $\Gamma:=(-\theta_0,\theta_0)$, we define a standard inner product $\langle\cdot,\cdot\rangle$ in $L^2(\Gamma)$ as
\begin{equation*}
\langle\zeta_1,\zeta_2\rangle=\int_{-\theta_0}^{\theta_0}\zeta_1(\theta)\zeta_2(\theta)\mathrm{d}\theta
\end{equation*}
and a function $\eta_0(\theta):=\left(\frac{1}{2\theta_0}\right)^{\frac{1}{2}}$,
$\eta_k(\theta):=\left(\frac{1}{\theta_0}\right)^{\frac{1}{2}}\cos\left(\frac{k\pi}{\theta_0}\theta\right)$
for each $k\in\mathbb{Z}^+$. Then, for the eigenvalue problem
\begin{equation*}
\begin{cases}
-\eta^{\prime\prime}(\theta)=\upsilon\eta(\theta),\\
\eta^{\prime}(\pm\theta_0)=0,
\end{cases}
\end{equation*}
the set $\mathcal{A}:=\{\eta_k\}_{k=0}^{\infty}$ and $\mathcal{A}_1:=\{\upsilon_k:\upsilon_k=\left(\frac{k\pi}{\theta_0}\right)^2,k\in\mathbb{Z}^+\}$ represent the collections of all eigenfunctions and the corresponding eigenvalues, respectively. Furthermore, the set $\mathcal{A}$ forms an orthonormal basis in $L^2(\Gamma)$ and an orthogonal basis in $H^1(\Gamma)$.
\par For any fixed $\varrho>0$, we consider the linear boundary value problem \eqref{Leqs3}-\eqref{Lbcs3}. For each $m=1,2,\cdot\cdot\cdot$ and all $k=0,1,...,m$, let
\begin{equation}\label{m}
\varphi^{\varrho}_m(r,\theta)=\sum_{j=0}^{m}\vartheta^{\varrho}_j(r)\eta_j(\theta),\quad \Psi^{\varrho}_m(r,\theta)=\sum_{j=0}^{m}\nu^{\varrho}_j(r)\eta_j(\theta)
\quad \hbox{for} \quad (r,\theta)\in\Omega_R.
\end{equation}
Then we find a solution $(\varphi^{\varrho}_m,\Psi^{\varrho}_m)$ satisfying
\begin{equation}\label{Leqs4}
\begin{split}
&\langle\mathcal{L}^\varrho_1(\varphi^{\varrho}_m,\Psi^{\varrho}_m),\eta_k\rangle=\langle F_1^\varrho,\eta_k\rangle,\\
&\langle\mathcal{L}_2(\varphi^{\varrho}_m,\Psi^{\varrho}_m),\eta_k\rangle=\langle F_2^\varrho,\eta_k\rangle,
\end{split}
\end{equation}
subject to the boundary conditions
\begin{equation}\label{Lbcs4}
\begin{cases}
(\partial_{r}\varphi^{\varrho}_{m},\varphi^{\varrho}_m,\partial_{r}\Psi^{\varrho}_m)=(\sum_{j=0}^{m}\langle g,\eta_j\rangle\eta_j,0,0) \ \ &{\rm{on}}\ \ \Gamma_{en},\\
\Psi^{\varrho}_m=0\ \ &{\rm{on}}\ \ \Gamma_{ex},\\
\partial_{\theta}\varphi^{\varrho}_m=\partial_{\theta}\Psi^{\varrho}_{m}=0 \ \ &{\rm{on}} \ \ \Gamma_w.
\end{cases}
\end{equation}
If $\varphi^{\varrho}_m$ and $\Psi^{\varrho}_m$ are smooth functions and solve \eqref{Leqs4}-\eqref{Lbcs4}, then
\begin{equation*}
\begin{split}
&-\iint_{\Omega_R}\mathcal{L}^{\varrho}_1(\varphi^{\varrho}_m,\Psi^{\varrho}_m)G(r)\partial_r\varphi^{\varrho}_m\mathrm{d}r\mathrm{d}\theta
-\iint_{\Omega_R}\mathcal{L}_2(\varphi^{\varrho}_m,\Psi^{\varrho}_m)\Psi^{\varrho}_m
\mathrm{d}r\mathrm{d}\theta\\
&=-\int_{0}^{R}\sum_{j=0}^{m}G(r)(\vartheta^{\varrho}_j)'(r)
\langle\mathcal{L}^{\varrho}_1(\varphi^{\varrho}_m,\Psi^{\varrho}_m),\eta_j\rangle\mathrm{d}r
-\int_{0}^{R}\sum_{j=0}^{m}\nu^{\varrho}_j(r)\langle\mathcal{L}_2(\varphi^{\varrho}_m,\Psi^{\varrho}_m),\eta_j\rangle\mathrm{d}r\\
&=-\iint_{\Omega_R}G(r)F^{\varrho}_1\partial_r\varphi^{\varrho}_m\mathrm{d}r\mathrm{d}\theta
-\iint_{\Omega_R}F^{\varrho}_2\Psi^{\varrho}_m\mathrm{d}r\mathrm{d}\theta,
\end{split}
\end{equation*}
provided $G(r)$ is constructed in Proposition \ref{prop1}. Then there holds
\begin{equation}\label{H1}
\Vert\varphi^{\varrho}_m\Vert_{H^1(\Omega_R)}+\Vert\Psi^{\varrho}_m\Vert_{H^1(\Omega_R)}
\leq C\left(\Vert F^{\varrho}_1\Vert_{L^2(\Omega_R)}+\Vert F^{\varrho}_2\Vert_{L^2(\Omega_R)}
+\Vert g\Vert_{L^2([-\theta_0,\theta_0])}\right),
\end{equation}
where the constant $ C>0 $  is independent of $\varrho$ and $m$.
\par In view of the orthonormality of the set $\mathcal{A}$ in $L^2(\Gamma)$, the system \eqref{Leqs4}-\eqref{Lbcs4} can be transformed into
\begin{equation}\label{Leqs5}
\begin{split}
\langle F^{\varrho}_1,\eta_k\rangle=&\sum_{j=0}^{m}\langle a^{\varrho}_{11}\eta_j,\eta_k\rangle(\vartheta^{\varrho}_j)''(r)
+\langle 2a^{\varrho}_{12}\eta_j',\eta_k\rangle(\vartheta^{\varrho}_j)'(r)+\bar{a}_1(\vartheta^{\varrho}_k)'(r)\\
&-\left(\frac{j\pi}{\theta_0}\right)^2\langle a^{\varrho}_{22}\eta_j,\eta_k\rangle\vartheta^{\varrho}_j(r)+\langle a^{\varrho}_{2}\eta_j',\eta_k\rangle\vartheta^{\varrho}_j(r)
+\bar{b}_2(\nu^{\varrho}_k)'(r)+\bar{b}_1\nu^{\varrho}_k(r),\\
\langle F^{\varrho}_2,\eta_k\rangle=&\bigg((\hat r\nu^{\varrho}_k)'\bigg)'(r)
-\frac{1}{\hat{r}}\left(\frac{k\pi}{\theta_0}\right)^2\nu^{\varrho}_k(r)+\bar c_2\nu^{\varrho}_k(r)
+\bar c_1(\vartheta^{\varrho}_k)'(r),
\end{split}
\end{equation}
for each $r\in(0,R)$, $m\in\mathbb{N}$ and all $k=0,1,...,m$,
with the boundary conditions
\begin{equation}\label{Lbcs5}
\begin{cases}
\begin{aligned}
&(\vartheta^{\varrho}_k(0),(\nu^{\varrho}_k)'(0))=(0,0),\\
&(\vartheta^{\varrho}_k)'(0)=\langle g,\eta_k\rangle,\\
&\nu^{\varrho}_k(R)=0.
\end{aligned}
\end{cases}
\end{equation}
For each $m\in\mathbb{N}$, set
\begin{equation*}
\begin{split}
\mathbf{X}_1&:=(\vartheta^{\varrho}_0,...,\vartheta^{\varrho}_m),\quad
\mathbf{X}_2:=\mathbf{X}^{\prime}_1=((\vartheta^{\varrho}_0)',...,(\vartheta^{\varrho}_m)'),\\
\mathbf{X}_3&:=(\nu^{\varrho}_0,...,\nu^{\varrho}_m),\quad
\mathbf{X}_4:=\mathbf{X}^{\prime}_3=((\nu^{\varrho}_0)',...,(\nu^{\varrho}_m)'),\quad
\mathbf{X}:=(\mathbf{X}_1,\mathbf{X}_2,\mathbf{X}_3,\mathbf{X}_4)^{T}.
\end{split}
\end{equation*}
Furthermore, we define a projection mapping $\Pi_{i}(i=1,2)$ by
\begin{equation*}
\Pi_1(\mathbf{X})=(\mathbf{X}_1,\mathbf{X}_2,\mathbf{0},\mathbf{X}_4)^{T} \quad \hbox{and} \quad
\Pi_2(\mathbf{X})=(\mathbf{0},\mathbf{0},\mathbf{X}_3,\mathbf{0})^{T}.
\end{equation*}
Then \eqref{Leqs5} and \eqref{Lbcs5} can be reduced to a first order ODE system
\begin{equation*}
\mathbf{X}^{\prime}=\mathbb{A}\mathbf{X}+\mathbf{F}
\end{equation*}
with
\begin{equation*}
\Pi_1(\mathbf{X})(0)=(\mathbf{0},\mathbf{0},\sum_{j=0}^{m}\langle g,\eta_j\rangle\mathbf{e}_j,\mathbf{0}):=\mathbf{P}_0,
\quad \Pi_2(\mathbf{X})(R)=\mathbf{0}
\end{equation*}
for the smooth mappings $\mathbb{A}:(0,R)\rightarrow \mathbb{R}^{4(m+1)\times4(m+1)}$ and $\mathbf{F}:(0,R)\rightarrow \mathbb{R}^{4(m+1)}$ with respect to the coefficients and right terms of \eqref{Leqs5}-\eqref{Lbcs5}.
We can obtain $\mathbf{X}$ by solving the integral equations
\begin{equation}\label{X}
\begin{split}
&\Pi_1(\mathbf{X})(0)-\int_{0}^r\Pi_1\mathbb{A}(\mathbf{X})(t)\mathrm{d}t
=\mathbf{P}_0+\int_{0}^r\Pi_1\mathbf{F}(t)\mathrm{d}t,\\
&\Pi_2(\mathbf{X})(r)-\int_{R}^r\Pi_2\mathbb{A}(\mathbf{X})(t)\mathrm{d}t
=\int_{R}^r\Pi_2\mathbf{F}(t)\mathrm{d}t.
\end{split}
\end{equation}
We define a linear operator $\mathfrak{R}:C^1([0,R];\mathbb{R}^{4(m+1)})\rightarrow C^1([0,R];\mathbb{R}^{4(m+1)})$ by
\begin{equation*}
\mathfrak{R}\mathbf{X}(r):=\Pi_1\int_{0}^r\mathbb{A}(\mathbf{X})(t)\mathrm{d}t
+\Pi_2\int_{R}^r\mathbb{A}(\mathbf{X})(t)\mathrm{d}t.
\end{equation*}
It follows from the Arzel$\grave{\mathrm{a}}$-Ascoli theorem that $\mathfrak{R}$ is compact. Then $\mathbf{X}$ solves \eqref{X} if and only if
\begin{equation}\label{R}
(\mathbf{Id}-\mathfrak{R})\mathbf{X}(r)=\mathbf{P}_0+\Pi_1\int_{0}^r\mathbf{F}(t)\mathrm{d}t
+\Pi_2\int_{R}^r\mathbf{F}(t)\mathrm{d}t.
\end{equation}
It follows from \eqref{H1} and the Fredholm alternative theorem that \eqref{R} has a unique solution. \\
\textbf{Step 3: The well-posedness of the approximated problem}
The bootstrap argument for \eqref{X} gives the smoothness of the solution $\mathbf{X}$ on $[0,R]$. Furthermore, Via the standard theory for elliptic and hyperbolic equations, the estimates of $\Vert\varphi^{\varrho}_m\Vert_{H^4(\Omega_R)}$ and $\Vert\Psi^{\varrho}_m\Vert_{H^4(\Omega_R)}$ can be obtained in the following lemma.
\begin{lemma}\label{3.4}
For each $m\in\mathbb{N}$, if $\varphi^{\varrho}_m$ and $\Psi^{\varrho}_m$ given by \eqref{m} solve the problem \eqref{Leqs4}-\eqref{Lbcs4}, then
\begin{equation}\label{mH^4_1}
\Vert\varphi^{\varrho}_m\Vert_{H^4(\Omega_R)}
\leq C\left(\Vert F^{\varrho}_1\Vert_{H^3(\Omega_R)}+\Vert F^{\varrho}_2\Vert_{H^2(\Omega_R)}
+\Vert g\Vert_{H^3([-\theta_0,\theta_0])}\right)
\end{equation}
and
\begin{equation}\label{mH^4_2}
\Vert\Psi^{\varrho}_m\Vert_{H^4(\Omega_R)}
\leq C\left(\Vert F^{\varrho}_1\Vert_{H^2(\Omega_R)}+\Vert F^{\varrho}_2\Vert_{H^2(\Omega_R)}
+\Vert g\Vert_{H^2([-\theta_0,\theta_0])}\right),
\end{equation}
where $C$ is a constant depending only on background data and $\epsilon_0$.
\end{lemma}
Lemma \ref{3.4} can be established by adjusting arguments of Lemma 2.4 in \cite{Duan0}, and its proof is  omitted here.
Next, for any fixed $\varrho>0$, the sequence $\{(\varphi^{\varrho}_m,\Psi^{\varrho}_m)\}_{m=1}^{\infty}$ is bounded in $[H^4(\Omega_R)]^2$. Since $\eta_j'(\pm\theta_0)=\eta_j'''(\pm\theta_0)=0$ for $j=0,1,\cdot\cdot\cdot$, each $(\varphi^{\varrho}_m,\Psi^{\varrho}_m)$ satisfies the compatibility conditions
\begin{equation*}
\frac{\partial^k\varphi^{\varrho}_m}{\partial\theta^k}=\frac{\partial^k\Psi^{\varrho}_m}{\partial\theta^k}=0
\quad\hbox{on}\quad\Gamma_{w}\quad\hbox{for}\quad k=1,3.
\end{equation*}
By applying the Morrey inequality, the sequence $\{(\varphi^{\varrho}_m,\Psi^{\varrho}_m)\}_{m\in\mathbb{N}}$ is bounded in $[C^{2,\frac{2}{3}}(\overline{\Omega_R})]^2$. Employing the Arzel$\mathrm{\grave{a}}$-Ascoli theorem and the weak compactness property of $H^4(\Omega_R)$, there exists a subsequence $\{(\varphi^{\varrho}_{m_k},\Psi^{\varrho}_{m_k})\}$ and the function $(\varphi^{\varrho}_*,\hat\Psi^{\varrho}_*)$ such that
\begin{equation*}
\begin{split}
&(\varphi^{\varrho}_{m_k},\Psi^{\varrho}_{m_k})\rightarrow(\varphi^{\varrho}_*,\hat\Psi^{\varrho}_*)\quad \hbox{in} \quad [C^{2,\frac{1}{2}}(\overline{\Omega_R})]^2,\\
&(\varphi^{\varrho}_{m_k},\Psi^{\varrho}_{m_k})\rightharpoonup(\varphi^{\varrho}_*,\hat\Psi^{\varrho}_*) \quad \hbox{in} \quad [H^4(\Omega_R)]^2.
\end{split}
\end{equation*}
Furthermore,
\begin{equation}\label{*c}
\frac{\partial^k\varphi^{\varrho}_*}{\partial\theta^k}=\frac{\partial^k\Psi^{\varrho}_*}{\partial\theta^k}=0 \quad \hbox{on} \quad \Gamma_{w} \quad \hbox{for} \quad k=1,3.
\end{equation}
This implies that $(\varphi^{\varrho}_*,\Psi^{\varrho}_*)$ is a classical solution to \eqref{Leqs3}-\eqref{Lbcs3}. Moreover, Lemma \ref{3.4} and the $H^4$ convergence of $\{(\varphi^{\varrho}_{m_k},\Psi^{\varrho}_{m_k})\}$ deduce that
\begin{equation}\label{*H^4_1}
\Vert\varphi^{\varrho}_*\Vert_{H^4(\Omega_R)}
\leq C\left(\Vert F^{\varrho}_1\Vert_{H^3(\Omega_R)}+\Vert F^{\varrho}_2\Vert_{H^2(\Omega_R)}
+\Vert g\Vert_{H^3([-\theta_0,\theta_0])}\right)
\end{equation}
and
\begin{equation}\label{*H^4_2}
\Vert\Psi^{\varrho}_*\Vert_{H^4(\Omega_R)}
\leq C\left(\Vert F^{\varrho}_1\Vert_{H^2(\Omega_R)}+\Vert F^{\varrho}_2\Vert_{H^2(\Omega_R)}
+\Vert g\Vert_{H^2([-\theta_0,\theta_0])}\right),
\end{equation}
where $C$ is a positive constant depending only on background data and $\epsilon_0$.\\
\textbf{Step 4: The well-posedness of of the linear boundary value problem.}
It follows from \eqref{*c}, \eqref{*H^4_1}, \eqref{*H^4_2} and the Morrey inequality that $\{(\varphi^{\varrho}_*,\Psi^{\varrho}_*)\}$ is bounded in $[C^{2,\frac{2}{3}}(\overline{\Omega_R})]^2$ for $\varrho>0$. By adjusting the limiting argument in Step 2, we extract a subsequence $\{(\varphi^{\varrho_{k}}_*,\Psi^{\varrho_{k}}_*)\}$ such that its limit $(\varphi_*,\Psi_*):=(\varphi,\Psi)\in[H^4(\Omega_R)\cap C^{2,\frac{1}{2}}(\overline{\Omega_R})]^2$ is a classical solution to \eqref{elli4}-\eqref{aBC4}, and satisfies the estimates \eqref{H^4_1} and \eqref{H^4_2}.
This completes the proof of Proposition \ref{prop2}.
\end{proof}
The well-posedness of \eqref{elli1P} and \eqref{aBC1P} directly follows from Proposition \ref{prop2}.
\begin{corollary}\label{lem3}
Fix $\epsilon_0\in(0,\mathcal{R})$. Let $\bar R\in(0,\mathcal{R}-\epsilon_0]$ be given in Proposition \ref{prop1}. Then for $R\in(0,\bar R)$ and $\max\{\delta_{\vartheta},\delta_{\nu}\}\leq \delta_0$, there exists a constant $C$ depending only on the background data and $\epsilon_0$ such that such that the boundary value problem \eqref{elli1P}-\eqref{aBC1P} associated with $(\mathcal{S}^*,\mathcal{K}^*)\in \mathcal{I}_{\delta_{\vartheta},R}$ and $(\mathcal{U}^*,\mathcal{V}^*,\check{\Phi}^*)\in \mathcal{I}_{\delta_{\nu},R}$ has a unique solution $(\mathcal{U},\mathcal{V},\check{\Phi})\in [H^3(\Omega_R)]^2\times H^4(\Omega_R)$ satisfying
\begin{equation}\label{zH^41}
\Vert(\mathcal{U},\mathcal{V})\Vert_{H^3(\Omega_R)}+\Vert\check{\Phi}\Vert_{H^4(\Omega_R)}
\leq C\left(\Vert (f_1,f_2)\Vert_{H^3(\Omega_R)}+\Vert f_3\Vert_{H^2(\Omega_R)}
+\sigma_2(U_{en}, V_{en}, E_{en}, \Phi_{ex})\right).
\end{equation}
Furthermore, the solution $(\mathcal{U},\mathcal{V},\check{\Phi})$ satisfies the compatibility conditions
\begin{equation}\label{compatibility2}
\frac{\partial\mathcal{U}}{\partial\theta}=\frac{\partial^{k-1}\mathcal{V}}{\partial\theta^{k-1}}=\frac{\partial^k\check{\Phi}}{\partial\theta^k}=0 \quad \hbox{on} \quad \Gamma_{w} \quad \hbox{for} \quad k=1,3.
\end{equation}
\end{corollary}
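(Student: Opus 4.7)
The plan is to derive the corollary as a direct consequence of Proposition \ref{prop2} by running the chain of reformulations (potential subtraction via $\phi$, stream-function substitution via $\psi$, and boundary-data normalization via $\mathfrak{g}, \Phi^*$) in reverse. First, given $f_3 \in H^2(\Omega_R)$ with the compatibility condition inherited from $\sigma$ and the supersonic iteration setup, I would solve the auxiliary mixed elliptic problem \eqref{extend} for $\phi$ using the symmetric reflection argument already spelled out before Proposition \ref{prop1}; this yields $\phi$ in a Sobolev class sufficiently strong for $\phi_r$ and $\phi_\theta/\hat r$ to lie in $H^3(\Omega_R)$, together with the boundary identities \eqref{regu11}.

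Next I would feed the resulting data $(\mathfrak{f}_1,\mathfrak{f}_2,g)$ and boundary correctors into the homogenized problem \eqref{elli4}--\eqref{aBC4} and invoke Proposition \ref{prop2} to obtain a unique solution $(\varphi,\Psi) \in [H^4(\Omega_R)]^2$ with the $H^4$--bounds \eqref{H^4_1}--\eqref{H^4_2} and the symmetry relations $\partial_\theta^k\varphi = \partial_\theta^k\Psi = 0$ on $\Gamma_w$ for $k=1,3$. Unwinding the substitutions then reconstructs the candidate solution of \eqref{elli1P}--\eqref{aBC1P} through
\begin{equation*}
\psi = \varphi + \mathfrak{g},\qquad \check{\Phi} = \Psi + \Phi^*, \qquad \check{\mathcal{U}} = \psi_r,\qquad \check{\mathcal{V}} = \psi_\theta/\hat r,
\end{equation*}
followed by $\mathcal{U} = \check{\mathcal{U}} - \phi_\theta/\hat r$ and $\mathcal{V} = \check{\mathcal{V}} + \phi_r$. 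The fact that $(\mathcal{U},\mathcal{V}, \check{\Phi})$ satisfies the original system \eqref{elli1P} is a direct algebraic check: the first two equations follow from $\mathcal{L}_1^{\A}(\psi,\check\Phi) = \mathfrak{f}_1$ and $\mathcal{L}_2(\psi,\check\Phi) = \mathfrak{f}_2$ by undoing the definition of $\mathfrak{f}_1,\mathfrak{f}_2$, while the curl-equation $(\hat r\mathcal{V})_r - \mathcal{U}_\theta = f_3$ is exactly the defining equation \eqref{extend} of $\phi$ combined with the identity $(\hat r\check{\mathcal{V}})_r = \check{\mathcal{U}}_\theta$ built into $\psi$.

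For the quantitative bound \eqref{zH^41}, I would combine \eqref{H^4_1}--\eqref{H^4_2} with \eqref{reguF1F2} and the $\phi$-estimate (which controls $\|\phi\|_{H^4}$ by $\|f_3\|_{H^2}$), then use the triangle inequality after subtracting the explicit smooth correctors $\mathfrak{g}$ and $\Phi^*$, whose $H^3,H^4$ norms are absorbed into $\sigma_2$. The compatibility identities \eqref{compatibility2} for $(\mathcal{U},\mathcal{V},\check\Phi)$ follow because the analogous identities hold separately for $(\varphi,\Psi)$ by Proposition \ref{prop2}, for $\phi$ by \eqref{regu11}, and for $\mathfrak{g},\Phi^*$ directly from their explicit form once the compatibility assumptions \eqref{compatibility conditions} on $(U_{en},V_{en},E_{en},\Phi_{ex})$ are imposed. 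Uniqueness is inherited from Proposition \ref{prop2}, since any two solutions of \eqref{elli1P}--\eqref{aBC1P} produce, under the same decomposition, two solutions of \eqref{elli4}--\eqref{aBC4} with identical data.

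The only delicate point I anticipate is a careful bookkeeping of derivative counts: the stream-function trick upgrades $(\check{\mathcal{U}},\check{\mathcal{V}})$ by one derivative (from $H^3$ to $H^4$ for $\psi$), so obtaining $\mathcal{U}, \mathcal{V} \in H^3$ demands that the $\phi$-correction also lie in $H^4$, which is exactly the regularity given by the $H^2$ bound on $f_3$ through the symmetric extension. Verifying that the symmetry extension of $\phi$ and the mollification compatibilities used in Proposition \ref{prop2} align with the wall identities on $\Gamma_w$ for every component is routine but technically the most error-prone step; everything else reduces to algebraic manipulation of the already-established estimates.
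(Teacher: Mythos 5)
Your proposal is correct and follows essentially the same route the paper intends: the paper offers no separate proof of this corollary, stating only that it "directly follows from Proposition \ref{prop2}," and the chain you describe (solving \eqref{extend} for $\phi$, passing to $(\check{\mathcal{U}},\check{\mathcal{V}})$ and then to the potential $\psi$, homogenizing with $\mathfrak{g},\Phi^*$, invoking Proposition \ref{prop2}, and unwinding) is exactly the reformulation already set up in Sections 3.2--3.3. The only minor bookkeeping caveat is that for the source term $\mathfrak{f}_1$ (which contains second derivatives of $\phi$) to lie in $H^3$ one needs $\phi\in H^5$, i.e.\ the $H^3$ control of $f_3$ recorded in \eqref{regu}, not merely the $H^2$ control you cite for reconstructing $\mathcal{U},\mathcal{V}\in H^3$.
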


\section{Nonlinear stability}\noindent
\par This section is devoted to the nonlinear structural stability of supersonic flows with nonzero vorticity to the Euler-Poisson system. We first outline the iteration scheme, then give the detailed proofs of Theorem \ref{thm1} in Section 4.2.
\par Suppose that $\max\{\delta_{\mu},\delta_{\nu}\}\leq \delta_0$ and $R\in(0,\bar R)$ hold. Fix $Q=(\mathcal{S}^*,\mathcal{K}^*)\in \mathcal{I}_{\delta_{\mu},R}$. For each $\bm{\mathcal{P}}=(\mathcal{U}^*,\mathcal{V}^*,\check{\Phi}^*)$, let $(\mathcal{U},\mathcal{V},\check{\Phi})$ be the unique solution to the linear boundary value problem \eqref{elli1P}-\eqref{aBC1P} associated with $(\mathcal{S}^*,\mathcal{K}^*,\mathcal{U}^*,\mathcal{V}^*,\check{\Phi}^*)\in \mathcal{I}_{\delta_{\mu},R}\times \mathcal{I}_{\delta_{\nu},R}$. Therefore, one can define a mapping $\mathcal{T}^Q_1: \mathcal{I}_{\delta_{\nu},R}\mapsto [H^3(\Omega_R)]^2\times H^4(\Omega_R)$ by
\begin{equation}\label{iteration1}
\mathcal{T}^Q_1(\mathcal{U}^*,\mathcal{V}^*,\check{\Phi}^*)=(\mathcal{U},\mathcal{V},\check{\Phi}).
\end{equation}
We express $\delta_\nu$ in terms of $\delta_\mu$ and $\sigma$. If there exist positive constant $\hat{\mu}$ such that, whenever $\delta_{\mu}+\sigma<\hat{\mu}$, the iteration mapping $\mathcal{T}^Q_1$ has a unique fixed point in $\mathcal{I}_{\delta_{\nu},R}$ for each $Q\in \mathcal{I}_{\delta_{\mu},R}$.
Next, for each $Q\in \mathcal{I}_{\delta_{\mu},R}$, let $(\mathcal{U}^Q,\mathcal{V}^Q,\check{\Phi}^Q)\in \mathcal{I}_{\delta_{\nu},R}$ be the fixed point of the iteration mapping $\mathcal{T}^Q_1$. We will show that the transport equations in terms of $(\mathcal{S},\mathcal{K})$ with the corresponding boundary conditions in \eqref{elli1} and \eqref{aBC1} has a unique solution $(\mathcal{S},\mathcal{K})\in[H^4(\Omega_R)]^2$. Hence, one can define another $\mathcal{T}_2: \mathcal{I}_{\delta_{\mu},R}\mapsto [H^4(\Omega_R)]^2$ by
\begin{equation}\label{iteration2}
\mathcal{T}_2(\mathcal{S}^*,\mathcal{K}^*)=(\mathcal{S},\mathcal{K}).
\end{equation}
Then we express $\delta_\mu$ in terms of $\sigma$, and prove that $\mathcal{T}_2$ has a unique fixed point $Q^{\sharp}=(\mathcal{S}^\sharp,\mathcal{K}^\sharp)\in\mathcal{I}_{\delta_{\mu},R}$. Consequently, $(\mathcal{S}^\sharp,\mathcal{K}^\sharp,\mathcal{U}^{Q^{\sharp}},\mathcal{V}^{Q^{\sharp}},\check{\Phi}^{Q^{\sharp}})$ solves the nonlinear boundary value problem \eqref{elli1} and \eqref{aBC1}. Finally, we prove that if $\sigma$ is sufficiently small, there exists a unique solution to Problem \ref{pro1}.

\par In the following, we give the detailed proof for Theorem \ref{thm1} outlined above.\\
\textbf{Step 1: The existence and uniqueness of a fixed point of $\mathcal{T}^Q_1$.}
\par We fixe $\epsilon_0\in(0,\frac{1}{10}\mathcal{R})$ and $R\in(0,\bar R)$. Assume that
\begin{equation}\label{NNNmunu}
\max\{\delta_{\mu},\delta_{\nu}\}\leq \delta_0,
\end{equation}
where $\delta_0$ and $\bar R$ are given in Lemma \ref{pr2} and Proposition \ref{prop1}, respectively.
Set
\begin{equation}\label{NNNnu}
\delta_{\nu}=m_1(\delta_{\mu}+\sigma)
\end{equation}
for the constant $m_1$ to be specified later.
Fix $Q\in \mathcal{I}_{\delta_{\mu},R}$, and let $\mathcal{T}^Q_1: \mathcal{I}_{\delta_{\nu},R}\mapsto [H^3(\Omega_R)]^2\times H^4(\Omega_R)$ given by \eqref{iteration1}. Fix $(\mathcal{U}^*,\mathcal{V}^*,\check{\Phi}^*)\in\mathcal{I}_{\delta_{\nu},R}$, and set $(\mathcal{U},\mathcal{V},\check{\Phi})=\mathcal{T}^Q_1(\mathcal{U}^*,\mathcal{V}^*,\check{\Phi}^*)$. It follows from \eqref{4iter1}, \eqref{regu},  and \eqref{zH^41} that there exists a constant $C_1^*$ depending only on background data and $\epsilon_0$ such that
\begin{equation*}
\Vert(\mathcal{U},\mathcal{V})\Vert_{H^3(\Omega_R)}+\Vert\check{\Phi}\Vert_{H^4(\Omega_R)}
\leq C^*_1(\delta_{\mu}+\delta_{\nu}^2+\sigma)<\frac{1}{2}\delta_{\nu},
\end{equation*}
where we choose $m=4C^*_1$ and suppose that
\begin{equation}\label{Ndelta1}
\delta_{\mu}+\sigma\leq \frac{1}{16(C_1^*)^2}:=\hat{\mu}.
\end{equation}
Therefore, $\mathcal{T}^Q_1$ maps  $\mathcal{I}_{\delta_{\nu},R}$ into itself for any $Q\in \mathcal{I}_{\delta_{\mu},R}$.
\par Next, for any fixed $\mathcal{U}^*,\mathcal{V}^*,\check{\Phi}^*\in\mathcal{I}_{\delta_{\nu},R}$, we will show that $\mathcal{T}^Q_1$  is a contraction mapping in a low order norm $\Vert(\mathcal{U},\mathcal{V})\Vert_{L^2(\Omega_R)}+\Vert\check{\Phi}\Vert_{H^1(\Omega_R)}$.
Let $\bm{\mathcal{P}}^i=(\mathcal{U}^*_i,\mathcal{V}^*_i,\check{\Phi}^*_i)\in\mathcal{I}_{\delta_{\nu},R}$, $i=1,2$, then $\mathcal{T}^Q_1(\mathcal{U}^*_i,\mathcal{V}^*_i,\check{\Phi}^*_i)=(\mathcal{U}_i,\mathcal{V}_i,\check{\Phi}_i)$. Set
\begin{equation*}
(Y^*_1,Y^*_2,Y^*_3)=(\mathcal{U}^*_1,\mathcal{V}^*_1,\check{\Phi}^*_1)-(\mathcal{U}^*_2,\mathcal{V}^*_2,\check{\Phi}^*_2),\quad
(Y_1,Y_2,Y_3)=(\mathcal{U}_1,\mathcal{V}_1,\check{\Phi}_1)-(\mathcal{U}_2,\mathcal{V}_2,\check{\Phi}_2).
\end{equation*}
It follows from \eqref{elli1P} and \eqref{aBC1P} that $(Y_1,Y_2,Y_3)$ satisfies the system
\begin{equation}\label{Nelli1P}
\begin{cases}
\begin{aligned}
&L_1^{P^1}(Y_1,Y_2,Y_3)=\mathcal{F}_1(r,\theta;\mathcal{K}^*,\mathcal{U}^*_i,\mathcal{V}^*_i,\check{\Phi}^*_i),\\
&L_2(Y_1,Y_3)=\mathcal{F}_2(r,\theta;\mathcal{S}^*,\mathcal{K}^*,\mathcal{U}^*_i,\mathcal{V}^*_i,\check{\Phi}^*_i),\\
&(\hat{r}Y_2)_r-\partial_{\theta}Y_1=\mathcal{F}_3(r,\theta;\mathcal{S}^*,\mathcal{K}^*,\mathcal{U}^*_i,\mathcal{V}^*_i,\check{\Phi}^*_i),\\
\end{aligned}
\end{cases}
\end{equation}
and the boundary conditions
\begin{equation}\label{NaBC1P}
\begin{cases}
(Y_1,Y_2,\partial_rY_3)(0,\theta)=(0,0,0),\ \ &{\rm{on}}\ \ \Gamma_{en},\\
Y_3(R,\theta)=0,\ \ &{\rm{on}}\ \ \Gamma_{ex},\\
Y_2(r,\pm\theta_0)=\partial_{\theta}Y_3(r,\pm\theta_0)=0, \ \ &{\rm{on}}\ \ \Gamma_w,
\end{cases}
\end{equation}
where
\begin{equation*}
\begin{split}
&L_1^{P^1}(Y_1,Y_2,Y_3)=
\hat{a}_{11}^{P^1}\partial_rY_1
+\hat{a}_{22}^{P^1}\partial_{\theta}Y_2
+\hat{a}_{12}^{P^1}\partial_{\theta}Y_1
+\hat{a}_{21}^{P^1}\partial_{r}Y_2
+\bar{a}_1Y_1+\bar{b}_1Y_3+\bar{b}_2\partial_rY_3,\\
&L_2(Y_1,Y_3)=(\hat r\partial_{r}Y_3)_r+\frac{1}{\hat{r}}\partial_{\theta\theta}Y_3
+\bar{c}_1(r)Y_1+\bar{c}_2(r)Y_3
\end{split}
\end{equation*}
and
\begin{equation*}
\begin{split}
\mathcal{F}_1(r,\theta;\mathcal{K}^*,\mathcal{U}^*_i,\mathcal{V}^*_i,\check{\Phi}^*_i)=&
-\bigg((\hat{a}_{11}^{P^1}-\hat{a}_{11}^{P^2})\partial_r\mathcal{U}_2
+(\hat{a}_{22}^{P^1}-\hat{a}_{22}^{P^2})\partial_{\theta}\mathcal{V}_2   \\
&+(\hat{a}_{12}^{P^1}-\hat{a}_{12}^{P^2})\partial_{\theta}\mathcal{U}_2
+(\hat{a}_{21}^{P^1}-\hat{a}_{21}^{P^2})\partial_{r}\mathcal{V}_2\bigg)  \\
&+f_1(r,\theta;\mathcal{K}^*,\mathcal{U}^*_1,\mathcal{V}^*_1,\check{\Phi}^*_1)
-f_1(r,\theta;\mathcal{K}^*,\mathcal{U}^*_2,\mathcal{V}^*_2,\check{\Phi}^*_2),\\
\mathcal{F}_2(r,\theta;\mathcal{S}^*,\mathcal{K}^*,\mathcal{U}^*_i,\mathcal{V}^*_i,\check{\Phi}^*_i)=&
f_2(r,\theta;\mathcal{S}^*,\mathcal{K}^*,\mathcal{U}^*_1,\mathcal{V}^*_1,\check{\Phi}^*_1)
-f_2(r,\theta;\mathcal{S}^*,\mathcal{K}^*,\mathcal{U}^*_2,\mathcal{V}^*_2,\check{\Phi}^*_2),\\
\mathcal{F}_3(r,\theta;\mathcal{S}^*,\mathcal{K}^*,\mathcal{U}^*_i,\mathcal{V}^*_i,\check{\Phi}^*_i)=&
f_3(r,\theta;\mathcal{S}^*,\mathcal{K}^*,\mathcal{U}^*_1,\mathcal{V}^*_1,\check{\Phi}^*_1)
-f_3(r,\theta;\mathcal{S}^*,\mathcal{K}^*,\mathcal{U}^*_2,\mathcal{V}^*_2,\check{\Phi}^*_2).
\end{split}
\end{equation*}
 Then there exist $\tilde{\phi}$ and $\tilde{\psi}$ such that $Y_1$ and $Y_2$ can be decomposed as
\begin{equation*}
Y_1=-\frac{1}{\hat{r}}\tilde{\phi}_{\theta}+\tilde{\psi}_r,\quad Y_2=\tilde{\phi}_{r}+\frac{1}{\hat{r}}\tilde{\psi}_{\theta}.
\end{equation*}
Moreover, $\tilde{\phi}$ satisfies
\begin{equation}\label{Nextend}
\begin{cases}
(\hat r\tilde{\phi}_{r})_r+\frac{1}{\hat{r}}\tilde{\phi}_{\theta\theta}
=\mathcal{F}_3,   &{\rm{in}}\quad \Omega_R,\\
\tilde{\phi}_r(0,\theta)=0,   &{\rm{on}}\quad \Gamma_{en},\\
\tilde{\phi}_r(R,\theta)=0,   &{\rm{on}}\quad \Gamma_{ex},\\
\tilde{\phi}(r,\pm \theta_0)=0,   &{\rm{on}}\quad \Gamma_w,
\end{cases}
\end{equation}
and $\tilde{\psi}$ satisfies
\begin{equation}
\begin{cases}
\mathcal{L}_1^{\A}(\tilde{\psi},Y_3)=\tilde{\mathcal{F}}_1,  &\rm{in}\quad \Omega_R,\\
\mathcal{L}_2(\tilde{\psi},Y_3)=\tilde{\mathcal{F}}_2,  &\rm{in}\quad \Omega_R,\\
(\partial_{r}\tilde{\psi},\partial_{\theta}\tilde{\psi},\partial_rY_3)(0,\theta)
=(\frac{1}{r_2}\partial_{\theta}\tilde{\phi}(0,\theta),0,0),  &\rm{on}\quad \Gamma_{en},\\
Y_3(R,\theta)=0,  &\rm{on}\quad \Gamma_{ex},\\
\partial_{\theta}\tilde{\psi}(r,\pm\theta_0)=\partial_{\theta}Y_3(r,\pm\theta_0)=0,  &\rm{on}\quad \Gamma_w,
\end{cases}
\end{equation}
where
\begin{equation*}
\begin{split}
&\mathcal{L}_1^{\A}(\tilde{\psi},Y_3)=
a_{11}^{P_1}\partial_{rr}\tilde{\psi}
+a_{22}^{P_1}\partial_{\theta\theta}\tilde{\psi}
+2a_{12}^{P_1}\partial_{r\theta}\tilde{\psi}
+\bar{a}_1\partial_{r}\tilde{\psi}+ a_2^{P_1} \partial_{\theta}\tilde{\psi}+
\bar{b}_1Y_3+\bar{b}_2\partial_rY_3,\\
&\mathcal{L}_2(\tilde{\psi},Y_3)=(\hat r\partial_{r}Y_3)_r+\frac{1}{\hat{r}}\partial_{\theta\theta}Y_3
+\bar{c}_1\partial_{r}\tilde{\psi}+\bar{c}_2Y_3\\
&a_{11}^{P_1}=\hat{a}_{11}^{P_1},\quad a_{22}^{P_1}=\frac{1}{\hat{r}}\hat{a}_{22}^{P_1},\quad a_{12}^{P_1}=\hat{a}_{12}^{P_1},\quad
a_2^{P_1}=\frac{1}{\hat{r}}\hat{a}_{12}^{P_1},\\
&\tilde{\mathcal{F}}_1=\mathcal{F}_1-\hat{a}_{21}^{P_1}\partial_{rr}\tilde{\phi}
+\frac{1}{\hat{r}}\hat{a}_{12}^{P_1}\partial_{\theta\theta}\tilde{\phi}
+\left(\frac{1}{\hat{r}}\hat{a}_{11}^{P_1}
-\hat{a}_{22}^{P_1}\right)\partial_{r\theta}\tilde{\phi}
+\left(\frac{1}{\hat{r}^2}\hat{a}_{11}^{P_1}
+\frac{1}{\hat{r}}\bar a_1\right)\partial_{\theta}\tilde{\phi},\\
&\tilde{\mathcal{F}}_2=\mathcal{F}_2+\frac{1}{\hat{r}}\bar c_1\partial_{\theta}\tilde{\phi}.
\end{split}
\end{equation*}
The standard elliptic theory in \cite{Gilbarg} implies that
\begin{equation*}
\Vert\tilde{\phi}\Vert_{H^2(\Omega_R)}\leq C\Vert \mathcal{F}_3\Vert_{L^2(\Omega_R)}
\leq C(\delta_{\mu}+\delta_{\nu})\left(\Vert (Y_1^*,Y_2^*)\Vert_{L^2(\Omega_R)}+\Vert Y_3^*\Vert_{H^1(\Omega_R)}\right)
\end{equation*}
This together with Proposition \ref{prop1} gives that
\begin{equation*}
\begin{split}
\Vert(\tilde{\psi},Y_3)\Vert_{H^1(\Omega_R)}
&\leq C\left(\Vert \tilde{\mathcal{F}}_1\Vert_{L^2(\Omega_R)}+\Vert \tilde{\mathcal{F}}_2\Vert_{L^2(\Omega_R)}
+\Vert \partial_{\theta}\tilde{\phi}(0,\cdot)\Vert_{L^2([-\theta_0,\theta_0])}\right)\\
&\leq C_2^*(\delta_{\mu}+\delta_{\nu})\left(\Vert (Y_1^*,Y_2^*)\Vert_{L^2(\Omega_R)}+\Vert Y_3^*\Vert_{H^1(\Omega_R)}\right),
\end{split}
\end{equation*}
where $C_2^*$ is a constant depending only on the background data and $\epsilon_0$.
Therefore,
\begin{equation}\label{Ndelta2-2}
\begin{split}
\Vert(Y_1,Y_2)\Vert_{L^2(\Omega_R)}+\Vert Y_3\Vert_{H^1(\Omega_R)}
&\leq C_2^*(\delta_{\mu}+\delta_{\nu})\left(\Vert (Y_1^*,Y_2^*)\Vert_{L^2(\Omega_R)}+\Vert Y_3^*\Vert_{H^1(\Omega_R)}\right)\\
&\leq 4C_2^*(C_1^*+1)(\delta_{\mu}+\sigma)\left(\Vert (Y_1^*,Y_2^*)\Vert_{L^2(\Omega_R)}+\Vert Y_3^*\Vert_{H^1(\Omega_R)}\right).
\end{split}
\end{equation}
If $(\delta_{\mu},\sigma)$ satisfy
\begin{equation}\label{Ndelta2}
4C_2^*(C_1^*+1)(\delta_{\mu}+\sigma)\leq\frac{1}{8}
\end{equation}
and \eqref{Ndelta1}, then the contraction mapping theorem implies that there exists a unique fixed point $(\mathcal{U}^Q,\mathcal{V}^Q,$\\$\check{\Phi}^Q)$ for the iteration mapping $\mathcal{T}^Q_1$.\\
\textbf{Step 2: The existence and uniqueness of a fixed point of $\mathcal{T}_2$.}
\par For any fixed $Q=(\mathcal{S}^*,\mathcal{K}^*)\in \mathcal{I}_{\delta_{\mu},R}$, the above arguments imply that $\mathcal{T}^Q_1$ has the unique fixed point $(\mathcal{U}^Q,\mathcal{V}^Q,\check{\Phi}^Q)\in\mathcal{I}_{\delta_{\nu},R}$, provided \eqref{Ndelta1} and \eqref{Ndelta2} hold. Furthermore, $(\mathcal{U}^Q,\mathcal{V}^Q,\check{\Phi}^Q)$ solves the nonlinear equations
\begin{equation}\label{NNelli1}
\begin{cases}
\begin{aligned}
&L_1(\mathcal{U}^Q,\mathcal{V}^Q,\check{\Phi}^Q)=f_1(r,\theta;\mathcal{K}^*,\mathcal{U}^Q,\mathcal{V}^Q,\check{\Phi}^Q),\\
&L_2(\mathcal{U}^Q,\check{\Phi}^Q)=f_2(r,\theta;\mathcal{S}^*,\mathcal{K}^*,\mathcal{U}^Q,\mathcal{V}^Q,\check{\Phi}^Q),\\
&(\hat{r}\mathcal{V}^Q)_r-\mathcal{U}^Q_{\theta}=f_3(r,\theta;
\mathcal{S}^*,\mathcal{K}^*,\mathcal{U}^Q,\mathcal{V}^Q,\check{\Phi}^Q),
\end{aligned}
\end{cases}
\end{equation}
in $\Omega_R$, subject to the boundary conditions
\begin{equation}\label{NNaBC1}
\begin{cases}
(\mathcal{U}^Q,\mathcal{V}^Q,\check{\Phi}^Q_r)(0,\theta)
=(U_{en}-U_0,V_{en},-E_{en}+E_0)(\theta),\ \ &{\rm{on}}\ \ \Gamma_{en},\\
\check{\Phi}^Q(R,\theta)=\Phi_{ex}(\theta)-\bar\Phi(R),\ \ &{\rm{on}}\ \ \Gamma_{ex},\\
\mathcal{V}^Q(r,\pm\theta_0)=\check{\Phi}^Q_{\theta}(r,\pm\theta_0)=0, \ \ &{\rm{on}}\ \ \Gamma_w,
\end{cases}
\end{equation}
Thus,
\begin{equation*}
(U^Q,V^Q,\Phi^Q)(r,\theta)=(\mathcal{U}^Q,\mathcal{V}^Q,\check{\Phi}^Q)
+(\bar U(r),0,\bar\Phi(r))\in [H^3(\Omega_R)]^2\times H^4(\Omega_R)
\end{equation*}
solves the system
\begin{equation}\label{Nr1EP1}
\begin{cases}
\begin{aligned}
&A_{11}(\mathcal{K}^*+0,U^Q,V^Q,\Phi^Q)U^Q_r+A_{22}(\mathcal{K}^*+0,U^Q,V^Q,\Phi^Q)V^Q_{\theta}+A_{12}(U^Q,V^Q)U^Q_{\theta}\\
&+A_{21}(U^Q,V^Q)V^Q_{r}
+B(\mathcal{K}^*+0,U^Q,V^Q,\Phi^Q)=0,\\
&(\hat{r}\Phi^Q_{r})_r+\frac{1}{\hat{r}}\Phi^Q_{\theta\theta}
=\hat{r}(\mathcal{H}(\mathcal{S}^*+S_0,\mathcal{K}^*+0,U^Q,V^Q,\Phi^Q)-b),\\
&U^Q((\hat{r}V^Q)_r-U^Q_{\theta})=\frac{\mathrm{e}^{\mathcal{S}^*+S_0}\mathcal{H}^{\gamma-1}(\mathcal{S}^*+S_0,\mathcal{K}^*+0,\Phi^Q,U^Q,V^Q)}
{\gamma-1}\mathcal{S}^*_{\theta}-\mathcal{K}^*_{\theta},\\
\end{aligned}
\end{cases}
\end{equation}
in $\Omega_R$, with the boundary conditions
\begin{equation}
\begin{cases}
(U^Q,V^Q,\Phi^Q_r)(0,\theta)=(U_{en},V_{en},-E_{en})(\theta),\ \ &{\rm{on}}\ \ \Gamma_{en},\\
\Phi^Q(R,\theta)=\Phi_{ex}(\theta),\ \ &{\rm{on}}\ \ \Gamma_{ex},\\
V^Q(r,\pm\theta_0)=\Phi^Q_{\theta}(r,\pm\theta_0)=0, \ \ &{\rm{on}}\ \ \Gamma_w.
\end{cases}
\end{equation}
It follows from \eqref{2rho} that $\mathcal{H}(\mathcal{S}^*+S_0,\mathcal{K}^*+0,U^Q,V^Q,\Phi^Q)\in H^3(\Omega_R)$. Furthermore, the
Morrey inequality yields
\begin{equation}
\Vert \Phi^Q\Vert_{C^{2,\frac{1}{2}}(\overline{\Omega_R})}\leq \Vert \Phi^Q\Vert_{H^4(\Omega_R)},\quad
\Vert \mathcal{H}\Vert_{C^{2,\frac{1}{2}}(\overline{\Omega_R})}\leq \Vert \mathcal{H}\Vert_{H^3(\Omega_R)}.
\end{equation}
We consider the problem
\begin{equation}
\begin{cases}
\begin{aligned}
&(\hat{r}\Phi^Q_{r})_r+\frac{1}{\hat{r}}\Phi^Q_{\theta\theta}
=\hat{r}(\mathcal{H}(\mathcal{S}^*+S_0,\mathcal{K}^*+0,U^Q,V^Q,\Phi^Q)-b),\ \ &{\rm{in}}\ \ \Omega_R,\\
&\Phi^Q_r(0,\theta)=-E_{en}(\theta),\ \ &{\rm{on}}\ \ \Gamma_{en},\\
&\Phi^Q(R,\theta)=\Phi_{ex}(\theta),\ \ &{\rm{on}}\ \ \Gamma_{ex},\\
&\Phi^Q_{\theta}(r,\pm\theta_0)=0, \ \ &{\rm{on}}\ \ \Gamma_w.
\end{aligned}
\end{cases}
\end{equation}
Combining the \eqref{compatibility conditions}, \eqref{1Set} and \eqref{compatibility2} deduces
\begin{equation*}
\partial_{\theta}(\mathcal{H},b)=0,\quad (E_{en},\Phi_{ex})'=0 \quad\rm{on}\quad \Gamma_w.
\end{equation*}
Then the standard Schauder estimates and the method of reflection give that
\begin{equation}\label{NPho}
\Vert \Phi^Q\Vert_{C^{3,\frac{1}{2}}(\overline{\Omega_R})}\leq C\left(\Vert \Phi^Q\Vert_{H^4(\Omega_R)}+\Vert \mathcal{H}\Vert_{H^3(\Omega_R)}+
\Vert b\Vert_{C^2(\overline{\Omega_R})}+\Vert E_{en}\Vert_{C^4(\Gamma_{en})}+\Vert \Phi_{ex}\Vert_{C^4(\Gamma_{ex})}\right).
\end{equation}
\par For any $(\mathcal{S}^*,\mathcal{K}^*)\in \mathcal{I}_{\delta_{\mu},R}$, let $\mathcal{T}_2: \mathcal{I}_{\delta_{\mu},R}\mapsto [H^4(\Omega_R)]^2$ given by \eqref{iteration2}. In view of \eqref{2-x}, $(S,\mathscr{K})$ satisfies
\begin{equation}\label{NSe}
\begin{cases}
\begin{aligned}
&\mathcal{H}(\mathcal{S}^*+S_0,\mathcal{K}^*,U^Q,V^Q,\Phi^Q)\left(U^Q\mathcal{S}_r
+\frac{1}{\hat{r}}V^Q\mathcal{S}_{\theta}\right)=0,
 \ \ {\rm{in}}\ \ \Omega_R,\\
&\mathcal{S}(0,\theta)=S_{en}-S_0, \ \ {\rm{on}}\ \ \Gamma_{en},
\end{aligned}
\end{cases}
\end{equation}
and
\begin{equation}\label{NKe}
\begin{cases}
\begin{aligned}
&\mathcal{H}(\mathcal{S}^*+S_0,\mathcal{K}^*,U^Q,V^Q,\Phi^Q)\left(U^Q\mathcal{K}_r+\frac{1}{\hat{r}} V^Q\mathcal{K}_{\theta}\right)=0,
 \ \ {\rm{in}}\ \ \Omega_R,\\
&\mathcal{K}(0,\theta)=\mathscr{K}_{en},\ \ {\rm{on}}\ \ \Gamma_{en}.
\end{aligned}
\end{cases}
\end{equation}
The first equation in \eqref{Nr1EP1} implies
\begin{equation*}
(\hat{r}\mathcal{H}(\mathcal{S}^*+S_0,\mathcal{K}^*,U^Q,V^Q,\Phi^Q) U^Q)_{r}
+(\mathcal{H}(\mathcal{S}^*+S_0,\mathcal{K}^*,U^Q,V^Q,\Phi^Q) V^Q)_{\theta}=0.
\end{equation*}
Then we can define a stream function on $[0,R]\times[-\theta_0,\theta_0]$ as
\begin{equation}
\begin{split}
w(r,\theta)=&\int_{-\theta_0}^{\theta}(r_2\mathcal{H}(\mathcal{S}^*+S_0,\mathcal{K}^*,U^Q,V^Q,\Phi^Q) U^Q)(0,s)\mathrm{d}s\\
&-\int_{0}^{r}(\mathcal{H}(\mathcal{S}^*+S_0,\mathcal{K}^*,U^Q,V^Q,\Phi^Q) V^Q)(s,-\theta_0)\mathrm{d}s.
\end{split}
\end{equation}
Hence, $w(r,\theta)$ satisfies
\begin{equation*}
\begin{cases}
w_r=-(\mathcal{H}(\mathcal{S}^*+S_0,\mathcal{K}^*,U^Q,V^Q,\Phi^Q) V^Q),\\
w_{\theta}=(\hat{r}\mathcal{H}(\mathcal{S}^*+S_0,\mathcal{K}^*,U^Q,V^Q,\Phi^Q) U^Q),
\end{cases}
\end{equation*}
and $(w_r,w_{\theta},w)\in [H^3(\Omega_R)]^2\times H^4(\Omega_R)$.
Since $w_r(r,\pm \theta_0)=0$, then $w(r,\pm \theta_0)=w(0,\pm \theta_0)$. Due to $\bar U(r)>0$ on $(0,R)$, and
$(\mathcal{U}^Q,\mathcal{V}^Q,\check{\Phi}^Q)\in\mathcal{I}_{\delta_{\nu},R}$, one obtains
\begin{equation*}
w_{\theta}(r,\theta)=(\hat{r}\mathcal{H}(\mathcal{S}^*+S_0,\mathcal{K}^*,U^Q,V^Q,\Phi^Q) (\bar U+\mathcal{U}^Q)(r,\theta)>0.
\end{equation*}
Hence, $w(r,\theta)$ is strictly increasing with respect to $\theta$ for each fixed $r\in(0,R)$. This implies that the interval $[w(r,-\theta_0),w(r,\theta_0)]$ is simply equal to $[w(0,-\theta_0),w(0,\theta_0)]$. Denote the inverse function of $w(0,\cdot):[-\theta_0,\theta_0]\rightarrow [w(0,-\theta_0),w(0,\theta_0)]$ by $w^{-1}_0(\cdot):[w(0,-\theta_0),w(0,\theta_0)]\rightarrow[-\theta_0,\theta_0]$. Denote
\begin{equation}\label{deW}
\mathcal{S}(r,\theta)=S_{en}\left(w^{-1}_0(w(r,\theta))\right)-S_0,\quad
\mathcal{K}(r,\theta)=\mathscr{K}_{en}\left(w^{-1}_0(w(r,\theta))\right).
\end{equation}
Note that $(\mathcal{S},\mathcal{K})$ solves \eqref{NSe} and \eqref{NKe}. Moreover, From \eqref{2rho} and \eqref{NPho}, it holds that
\begin{equation*}
\begin{split}
&\left(\mathcal{H}(\mathcal{S}^*+S_0,\mathcal{K}^*,U^Q,V^Q,\Phi^Q) U^Q\right)(0,\cdot)\\
&=\bigg(\bigg(\frac{\gamma-1}{\gamma\mathrm{e}^{S_{en}}}\bigg
(\mathscr{K}_{en}+\Phi^Q-\frac{U_{en}^2+V_{en}^2}{2}\bigg)\bigg)
^{\frac{1}{\gamma-1}} U^Q\bigg)(0,\cdot)
\in C^3([-\theta_0,\theta_0]).
\end{split}
\end{equation*}
Then $w^{-1}_0(\cdot)\in C^4([w(0,-\theta_0),w(0,\theta_0)])$ and
\begin{equation}\label{NSKes}
\Vert\mathcal{S}\Vert_{H^4(\Omega_R)}\leq C_3^*\Vert S_{en}-S_0\Vert_{C^4([-\theta_0,\theta_0])},\quad
\Vert\mathcal{K}\Vert_{H^4(\Omega_R)}\leq C_3^*\Vert \mathscr{K}_{en}\Vert_{C^4([-\theta_0,\theta_0])},
\end{equation}
provided the constant $C_3^*$ depends only on the background data and $\epsilon_0$. By the compatibility conditions \eqref{compatibility conditions}, we have $\partial_{\theta}\mathcal{S}(\pm\theta_0)=\partial_{\theta}\mathcal{K}(\pm\theta_0)=0$ for $k=1,3$.
For each $(\mathcal{S}^*,\mathcal{K}^*)\in \mathcal{I}_{\delta_{\mu},R}$ and the iteration mapping $\mathcal{T}_2$,
set
\begin{equation}\label{NNNmu}
\delta_{\mu}=4C_3^*\sigma.
\end{equation}
Then \eqref{NSKes} gives
\begin{equation}
\Vert(\mathcal{S},\mathcal{K})\Vert_{H^4(\Omega_R)}\leq 2C_3^*\sigma\leq\frac{\delta_{\mu}}{2}.
\end{equation}
Therefore, $\mathcal{T}_2$ maps $\mathcal{I}_{\delta_{\mu},R}$ into itself.
\par Next, we will show that $\mathcal{T}_2$ is a contraction mapping in a low order norm $\Vert(\mathcal{S},\mathcal{K})\Vert_{H^1(\Omega_R)}$ for a suitably small $\sigma$. Let $Q^i=(\mathcal{S}^*_i,\mathcal{K}^*_i)\in\mathcal{I}_{\delta_{\mu},R}$, $i=1,2$, then $\mathcal{T}_2(\mathcal{S}^*_i,\mathcal{K}^*_i)=(\mathcal{S}_i,\mathcal{K}_i)$. Set
\begin{equation*}
(W^*_1,W^*_2)=(\mathcal{S}^*_1,\mathcal{K}^*_1)-(\mathcal{S}^*_2,\mathcal{K}^*_2),\quad
(W_1,W_2)=(\mathcal{S}_1,\mathcal{K}_1)-(\mathcal{S}_2,\mathcal{K}_2)
\end{equation*}
and
\begin{equation*}
(U^Q_i,V^Q_i,\Phi^Q_i)=(\mathcal{U}^Q_i,\mathcal{V}^Q_i,\check{\Phi}^Q_i)
+(\bar U(r),0,\bar\Phi(r)),
\end{equation*}
where $(\mathcal{U}_i^Q,\mathcal{V}_i^Q,\check{\Phi}_i^Q)$ is the unique fixed point of $\mathcal{T}^{Q^i}_1$. It follows from \eqref{deW} that
\begin{equation}
\mathcal{S}_i(r,\theta)=S_{en}\left(W^{(i)}(r,\theta)\right)-S_0,\quad
\mathcal{K}_i(r,\theta)=\mathscr{K}_{en}\left(W^{(i)}(r,\theta)\right),
\end{equation}
where $W^{(i)}(r,\theta)=(w^{(i)}_0)^{-1}(w^{(i)}(r,\theta))$ and
\begin{equation*}
\begin{split}
w^{(i)}(r,\theta)=&\int_{-\theta_0}^{\theta}(r_2\mathcal{H}(\mathcal{S}_i^*+S_0,\mathcal{K}_i^*,U_i^Q,V_i^Q,\Phi_i^Q) U_i^Q)(0,s)\mathrm{d}s\\
&-\int_{0}^{r}(\mathcal{H}(\mathcal{S}_i^*+S_0,\mathcal{K}_i^*,U_i^Q,V_i^Q,\Phi_i^Q) V_i^Q)(s,-\theta_0)\mathrm{d}s.
\end{split}
\end{equation*}
Moreover, $(w^{(i)}_0)^{-1}$ is the inverse function of $w^{(i)}(0,\cdot)$. Thus
\begin{equation}
\vert W_1\vert=\vert \mathcal{S}_1-\mathcal{S}_2\vert
\leq\Vert S_{en}'\Vert_{L^{\infty}([-\theta_0,\theta_0])}\vert W^{(1)}(r,\theta)-W^{(2)}(r,\theta)\vert.
\end{equation}
Since $w^{(i)}_0\left(W^{(i)}(r,\theta)\right)=w^{(i)}(r,\theta)$, then
\begin{equation*}
\begin{split}
&\int_{W^{(2)}(r,\theta)}^{W^{(1)}(r,\theta)}(r_2\mathcal{H}(\mathcal{S}_1^*+S_0,\mathcal{K}_1^*,U_1^Q,V_1^Q,\Phi_1^Q) U_1^Q)(0,s)\mathrm{d}s\\
&=w^{(1)}(r,\theta)-w^{(2)}(r,\theta)
-\int_{-\theta_0}^{W^{(2)}(r,\theta)}\bigg(r_2\mathcal{H}(\mathcal{S}_1^*+S_0,\mathcal{K}_1^*,U_1^Q,V_1^Q,\Phi_1^Q) U_1^Q\\
&-r_2\mathcal{H}(\mathcal{S}_2^*+S_0,\mathcal{K}_2^*,U_2^Q,V_2^Q,\Phi_2^Q) U_2^Q\bigg)(0,s)\mathrm{d}s.
\end{split}
\end{equation*}
Consequently,
\begin{equation*}
\begin{split}
&\mathcal{L}^{(1)}\vert W^{(1)}(r,\theta)-W^{(2)}(r,\theta)\vert\\
&\leq w^{(1)}(r,\theta)-w^{(2)}(r,\theta)
+\int_{-\theta_0}^{\theta_0}\bigg(r_2\mathcal{H}(\mathcal{S}_1^*+S_0,\mathcal{K}_1^*,U_1^Q,V_1^Q,\Phi_1^Q) U_1^Q\\
&-r_2\mathcal{H}(\mathcal{S}_2^*+S_0,\mathcal{K}_2^*,U_2^Q,V_2^Q,\Phi_2^Q) U_2^Q\bigg)(0,s)\mathrm{d}s
\end{split}
\end{equation*}
with
\begin{equation*}
\mathcal{L}^{(i)}=\min_{\theta\in[-\theta_0,\theta_0]}r_2\mathcal{H}(\mathcal{S}_i^*+S_0,\mathcal{K}_i^*,U_i^Q,V_i^Q,\Phi_i^Q)U_i^Q(0,\theta)>0.
\end{equation*}
Note that
\begin{equation*}
W_1^*(0,\theta)=W_2^*(0,\theta)=\left(U^Q_1-U^Q_2\right)(0,\theta)=\left(V^Q_1-V^Q_2\right)(0,\theta)=0.
\end{equation*}
This implies
\begin{equation}\label{W*1}
\begin{split}
\Vert W_1\Vert_{L^2(\Omega_R)}\leq C\sigma\Big(\Vert(W^*_1,W^*_2)\Vert_{L^2(\Omega_R)}+\Vert (Z_1,Z_2,Z_3)\Vert_{L^2(\Omega_R)}
+\Vert Z_3(0,\cdot)\Vert_{L^2([-\theta_0,\theta_0])}\Big),
\end{split}
\end{equation}
where
\begin{equation*}
\begin{split}
(Z_1,Z_2,Z_3)=(\mathcal{U}^Q_1-\mathcal{U}^Q_2,\mathcal{V}^Q_1-\mathcal{V}^Q_2,\check{\Phi}^Q_1-\check{\Phi}^Q_2).
\end{split}
\end{equation*}
Furthermore,
\begin{equation*}
\begin{split}
\vert\partial_rW_1\vert=&\Big\vert S_{en}'\left(W^{(1)}(r,\theta)\right)\partial_rW^{(1)}(r,\theta)-
S_{en}'\left(W^{(2)}(r,\theta)\right)\partial_rW^{(2)}(r,\theta)\Big\vert\\
\leq &\Vert S_{en}''\Vert_{L^{\infty}([-\theta_0,\theta_0])}\Big\vert W^{(1)}(r,\theta)-W^{(2)}(r,\theta)\Big\vert
\frac{\Vert\nabla w^{(1)}(r,\theta)\Vert_{L^{\infty}([-\theta_0,\theta_0])}}{\mathcal{L}^{(1)}}\\
&+\Vert S_{en}'\Vert_{L^{\infty}([-\theta_0,\theta_0])}
\frac{\Vert\nabla w^{(1)}(r,\theta)\Vert_{L^{\infty}([-\theta_0,\theta_0])}}{\mathcal{L}^{(1)}\mathcal{L}^{(2)}}\\
&\times\Big\vert \mathcal{H}(\mathcal{S}_1^*+S_0,\mathcal{K}_1^*,U_1^Q,V_1^Q,\Phi_1^Q) U_1^Q(0,W^{(1)})\\
&\quad-\mathcal{H}(\mathcal{S}_2^*+S_0,\mathcal{K}_2^*,U_2^Q,V_2^Q,\Phi_2^Q) U_1^Q(0,W^{(2)})\Big\vert\\
&+\Vert S_{en}'\Vert_{L^{\infty}([-\theta_0,\theta_0])}
\frac{\vert\nabla w^{(1)}(r,\theta)
-\nabla w^{(2)}(r,\theta)\vert}{\mathcal{L}^{(2)}}.
\end{split}
\end{equation*}
This together the similar computations for $\partial_{\theta}W_1$ yields
\begin{equation}\label{W*2}
\Vert\nabla W_1\Vert_{L^{2}(\Omega_R)}\leq C\sigma\Big(\Vert(W^*_1,W^*_2)\Vert_{L^2(\Omega_R)}+\Vert (Z_1,Z_2,Z_3)\Vert_{L^2(\Omega_R)}
+\Vert Z_3(0,\cdot)\Vert_{L^2([-\theta_0,\theta_0])}\Big).
\end{equation}
One can deduce the same estimates in \eqref{W*1} and \eqref{W*2} for $W_2$. Therefore, it holds that
\begin{equation}\label{NNNH1}
\Vert (W_1,W_2)\Vert_{H^{1}(\Omega_R)}\leq C_4^*\sigma\Big(\Vert(W^*_1,W^*_2)\Vert_{L^2(\Omega_R)}+\Vert (Z_1,Z_2,Z_3)\Vert_{L^2(\Omega_R)}
+\Vert Z_3(0,\cdot)\Vert_{L^2([-\theta_0,\theta_0])}\Big),
\end{equation}
provided the constant $C_4^*$ depends only on the background data and $\epsilon_0$.
\par It remains to estimate $\Vert (Z_1,Z_2,Z_3)\Vert_{L^2(\Omega_R)}+\Vert Z_3(0,\cdot)\Vert_{L^2([-\theta_0,\theta_0])}$. Note that $(\mathcal{U}_i^Q,\mathcal{V}_i^Q,\check{\Phi}_i^Q)$ solves the nonlinear boundary value problem  \eqref{NNelli1} and \eqref{NNaBC1}, respectively. Then $(Z_1,Z_2,Z_3)$ satisfies the system
\begin{equation}
\begin{cases}
\begin{aligned}
&L^*_1(Z_1,Z_2,Z_3)=f^*_1(r,\theta;\mathcal{K}_i^*,\mathcal{U}_i^Q,\mathcal{V}_i^Q,\check{\Phi}_i^Q),\\
&L^*_2(Z_1,Z_3)=f^*_2(r,\theta;\mathcal{S}_i^*,\mathcal{K}_i^*,\mathcal{U}_i^Q,\mathcal{V}_i^Q,\check{\Phi}_i^Q),\\
&(\hat{r}Z_2)_r-\partial_{\theta}Z_1=f^*_3(r,\theta;\mathcal{S}_i^*,
\mathcal{K}_i^*,\mathcal{U}_i^Q,\mathcal{V}_i^Q,\check{\Phi}_i^Q),
\end{aligned}
\end{cases}
\end{equation}
in $\Omega_R$, subject to the boundary conditions
\begin{equation}
\begin{cases}
(Z_1,Z_2,\partial_rZ_3)(0,\theta)=(0,0,0),\ \ &{\rm{on}}\ \ \Gamma_{en},\\
Z_3(R,\theta)=0,\ \ &{\rm{on}}\ \ \Gamma_{ex},\\
Z_2(r,\pm\theta_0)=\partial_{\theta}Z_3(r,\pm\theta_0)=0, \ \ &{\rm{on}}\ \ \Gamma_w,
\end{cases}
\end{equation}
where
\begin{equation*}
\begin{split}
L^*_1(Z_1,Z_2,Z_3)=&
\hat{a}_{11}(r,\theta;\mathcal{K}^*_1,\mathcal{U}^Q_1,\mathcal{V}^Q_1,\check{\Phi}^Q_1)\partial_rZ_1
+\hat{a}_{22}(r,\theta;\mathcal{K}^*_1,\mathcal{U}^Q_1,\mathcal{V}^Q_1,\check{\Phi}^Q_1)\partial_{\theta}Z_2\\
&+\hat{a}_{12}(\mathcal{U}^Q_1,\mathcal{V}^Q_1)\partial_{\theta}Z_1
+\hat{a}_{21}(\mathcal{U}^Q_1,\mathcal{V}^Q_1)\partial_{r}Z_2
+\bar{a}_1(r)Z_1+\bar{b}_1(r)Z_3+\bar{b}_2(r)\partial_rZ_3,\\
L_2(Z_1,Z_3)=&(\hat r\partial_{r}Z_3)_r+\frac{1}{\hat{r}}\partial_{\theta\theta}Z_3
+\bar{c}_1(r)Z_1+\bar{c}_2(r)Z_3,
\end{split}
\end{equation*}
and
\begin{equation*}
\begin{aligned}
f^*_1
=&f_1(r,\theta;\mathcal{K}_1^*,\mathcal{U}_1^Q,\mathcal{V}_1^Q,\check{\Phi}_1^Q)
-f_1(r,\theta;\mathcal{K}_2^*,\mathcal{U}_2^Q,\mathcal{V}_2^Q,\check{\Phi}_2^Q)\\
&+\Big(\hat{a}_{11}(r,\theta;\mathcal{K}^*_2,\mathcal{U}^Q_2,\mathcal{V}^Q_2,\check{\Phi}^Q_2)
-\hat{a}_{11}(r,\theta;\mathcal{K}^*_1,\mathcal{U}^Q_1,\mathcal{V}^Q_1,\check{\Phi}^Q_1)\Big)\partial_{r}\mathcal{U}^Q_2\\
&+\Big(\hat{a}_{22}(r,\theta;\mathcal{K}^*_2,\mathcal{U}^Q_2,\mathcal{V}^Q_2,\check{\Phi}^Q_2)
-\hat{a}_{22}(r,\theta;\mathcal{K}^*_1,\mathcal{U}^Q_1,\mathcal{V}^Q_1,\check{\Phi}^Q_1)\Big)\partial_{\theta}\mathcal{V}^Q_2\\
&+\Big(\hat{a}_{12}(r,\theta;\mathcal{U}^Q_2,\mathcal{V}^Q_2)
-\hat{a}_{12}(r,\theta;\mathcal{U}^Q_1,\mathcal{V}^Q_1)\Big)\partial_{\theta}\mathcal{U}^Q_2\\
&+\Big(\hat{a}_{21}(r,\theta;\mathcal{U}^Q_2,\mathcal{V}^Q_2)
-\hat{a}_{21}(r,\theta;\mathcal{U}^Q_1,\mathcal{V}^Q_1)\Big)\partial_{r}
\mathcal{V}^Q_2,\\
f^*_2
=&f_2(r,\theta;\mathcal{S}_1^*,\mathcal{K}_1^*,\mathcal{U}_1^Q,\mathcal{V}_1^Q,\check{\Phi}_1^Q)
-f_2(r,\theta;\mathcal{S}_2^*,\mathcal{K}_2^*,\mathcal{U}_2^Q,\mathcal{V}_2^Q,\check{\Phi}_2^Q),\\
f^*_3
=&f_3(r,\theta;\mathcal{S}_1^*,\mathcal{K}_1^*,\mathcal{U}_1^Q,\mathcal{V}_1^Q,\check{\Phi}_1^Q)
-f_3(r,\theta;\mathcal{S}_2^*,\mathcal{K}_2^*,\mathcal{U}_2^Q,\mathcal{V}_2^Q,\check{\Phi}_2^Q).
\end{aligned}
\end{equation*}
It holds that
\begin{equation*}
\begin{split}
&\Vert (f^*_1,f^*_3)\Vert_{L^2(\Omega_R)}
\leq C \left(\Vert (W_1^*,W_2^*)\Vert_{L^2(\Omega_R)}+\delta_{\nu}\Vert (Z_1,Z_2,Z_3)\Vert_{L^2(\Omega_R)}\right),\\
&\Vert f^*_2\Vert_{L^2(\Omega_R)}
\leq C \left(\Vert (W_1^*,W_2^*)\Vert_{H^1(\Omega_R)}+\delta_{\mu}\Vert (Z_1,Z_2,Z_3)\Vert_{L^2(\Omega_R)}\right).
\end{split}
\end{equation*}
similar to the problem \eqref{Nelli1P} and \eqref{NaBC1P}, there holds
\begin{equation*}
\Vert (Z_1,Z_2)\Vert_{L^2(\Omega_R)}+\Vert Z_3\Vert_{H^1(\Omega_R)}
\leq C \left(\Vert (W_1^*,W_2^*)\Vert_{H^1(\Omega_R)}+(\delta_{\mu}+\delta_{\nu})\Vert (Z_1,Z_2,Z_3)\Vert_{L^2(\Omega_R)}\right).
\end{equation*}
Using the trace theorem deduces
\begin{equation*}
\Vert Z_3(0,\cdot)\Vert_{L^2([-\theta_0,\theta_0])}
\leq C \left(\Vert (W_1^*,W_2^*)\Vert_{H^1(\Omega_R)}+(\delta_{\mu}+\delta_{\nu})\Vert (Z_1,Z_2,Z_3)\Vert_{L^2(\Omega_R)}\right).
\end{equation*}
These together with \eqref{NNNnu} and \eqref{NNNmu} yield
\begin{equation}
\begin{split}
&\Vert (Z_1,Z_2,Z_3)\Vert_{L^2(\Omega_R)}+\Vert Z_3(0,\cdot)\Vert_{L^2([-\theta_0,\theta_0])}\\
&\leq C_5^* \left(\Vert (W_1^*,W_2^*)\Vert_{H^1(\Omega_R)}+(\delta_{\mu}+\delta_{\nu})\Vert (Z_1,Z_2,Z_3)\Vert_{L^2(\Omega_R)}\right)\\
&\leq C_5^* \left(\Vert (W_1^*,W_2^*)\Vert_{H^1(\Omega_R)}+\left(4C_3^*\sigma+4C_1^*(4C_3^*\sigma+\sigma)\right)\Vert (Z_1,Z_2,Z_3)\Vert_{L^2(\Omega_R)}\right),
\end{split}
\end{equation}
where the constant $C_5^*$ depends only on the background data and $\epsilon_0$. If $\sigma$ satisfies
\begin{equation}\label{Ndelta3}
\sigma\leq \frac{1}{2C_5^*\left(4C_3^*+4C_1^*(4C_3^*+1)\right)},
\end{equation}
then
\begin{equation*}
\begin{split}
\Vert (Z_1,Z_2,Z_3)\Vert_{L^2(\Omega_R)}+\Vert Z_3(0,\cdot)\Vert_{L^2([-\theta_0,\theta_0])}\leq 2C_5^*\Vert (W_1^*,W_2^*)\Vert_{H^1(\Omega_R)}.
\end{split}
\end{equation*}
Furthermore, \eqref{NNNH1} implies that
\begin{equation}
\Vert (W_1,W_2)\Vert_{H^{(1)}(\Omega_R)}\leq C_4^*(2C_5^*+1)\sigma (\Vert(W^*_1,W^*_2)\Vert_{H^1(\Omega_R)}.
\end{equation}
If $\sigma$ is chosen to satisfy
\begin{equation}\label{Ndelta4}
\sigma\leq \frac{1}{4C_4^*(2C_5^*+1)},
\end{equation}
and \eqref{Ndelta1}, \eqref{Ndelta2} \eqref{Ndelta3} hold, then the contraction mapping theorem implies that there exists a unique fixed point $(\mathcal{S}^\sharp,\mathcal{K}^\sharp)\in\mathcal{I}_{\delta_{\mu},R}$ for the iteration mapping $\mathcal{T}_2$.
\par In the following, we choose $\tilde{\sigma}_*$ such that, whenever $\sigma\in(0,\sigma_*]$, the conditions \eqref{NNNmunu}, \eqref{Ndelta1}, \eqref{Ndelta2}, \eqref{Ndelta3} and \eqref{Ndelta4} are all satisfied. Due to \eqref{NNNnu} and \eqref{NNNmu}, the condition \eqref{NNNmunu} holds if
\begin{equation*}
\sigma\leq \frac{\delta_0}{4C_3^*+4C_1^*\left(4C_3^*+1\right)}:=\sigma^1_*.
\end{equation*}
The conditions \eqref{Ndelta1} and \eqref{Ndelta2} hold if
\begin{equation*}
\sigma\leq \min\left\{\frac{1}{16(C_1^*)^2(4C_3^*+1)},\frac{1}{32C_2^*(C_1^*+1)(4C_3^*+1)}\right\}:=\sigma^2_*.
\end{equation*}
The conditions \eqref{Ndelta3} and \eqref{Ndelta4} hold if
\begin{equation*}
\sigma\leq \min\left\{\frac{1}{2C_5^*\left(4C_3^*+4C_1^*(4C_3^*+1)\right)},\frac{1}{4C_4^*\left(2C_5^*+1\right)}\right\}:=\sigma^3_*.
\end{equation*}
If $\tilde{\sigma}_*$ is chosen as
\begin{equation}
\tilde{\sigma}_*=\min\{\sigma^1_*,\sigma^2_*,\sigma^3_*\},
\end{equation}
then for any $\sigma\in(0,\tilde{\sigma}_*]$, the above conditions hold. \\
\textbf{Step 3: The uniqueness of the solution to Problem \ref{pro1}.} The arguments in Step 1 and Step 2 implies that there exists classical solutions $(\mathcal{S}^\sharp,\mathcal{K}^\sharp,\mathcal{U}^{Q^{\sharp}},\mathcal{V}^{Q^{\sharp}},\check{\Phi}^{Q^{\sharp}})$ to the nonlinear boundary value problem \eqref{elli1} and \eqref{aBC1}. That is, if $\sigma\in(0,\tilde{\sigma}_*]$, then the existence of the solutions to Problem \ref{pro1} is established. In the following, we assume that $(U_i,V_i,\Phi_i,\mathscr{K}_i,S_i)$ $(i=1,2)$ are two solutions to Problem \ref{pro1} and satisfy the estimate \eqref{sH4} and \eqref{sH4-1}. Set
\begin{equation*}
(\mathcal{M}_1,\mathcal{M}_2,\mathcal{M}_3,\mathcal{M}_4,\mathcal{M}_5)
:=(U_1,V_1,\Phi_1,\mathscr{K}_1,S_1)-(U_2,V_2,\Phi_2,\mathscr{K}_2,S_2).
\end{equation*}
Similar to the arguments in Step 1 and Step 2, we derive that there exists a constant depending only on the background data and $\epsilon_0$ such that
\begin{equation}
\Vert(\mathcal{M}_1,\mathcal{M}_2)\Vert_{L^2(\Omega_R)}
+\Vert(\mathcal{M}_3,\mathcal{M}_4,\mathcal{M}_5)\Vert_{H^1(\Omega_R)}\leq \mathcal{C}^*_6\sigma \left(\Vert(\mathcal{M}_1,\mathcal{M}_2)\Vert_{L^2(\Omega_R)}
+\Vert(\mathcal{M}_3,\mathcal{M}_4,\mathcal{M}_5)\Vert_{H^1(\Omega_R)}\right).
\end{equation}
If
\begin{equation*}
\sigma\leq \min\left\{\tilde{\sigma}_*,\frac{1}{2\mathcal{C}^*_6}\right\}:=\sigma_*,
\end{equation*}
then $(\mathcal{M}_1,\mathcal{M}_2,\mathcal{M}_3,\mathcal{M}_4,\mathcal{M}_5)=\mathbf{0}$, i.e. $(U_1,V_1,\Phi_1,\mathscr{K}_1,S_1)=(U_2,V_2,\Phi_2,\mathscr{K}_2,S_2)$. This completes the proof of Theorem \ref{thm1}.

{\bf Acknowledgement.}
 The research of Yuanyuan Xing is partially supported by  the Natural Science Foundation of Hebei province, China (No. A2025501003) and Fundamental Research Funds for the Central Universities (N2523027). The research of Zihao Zhang was supported by  the Postdoctoral Fellowship Program of CPSF under Grant Number GZB20250719.
\par {\bf Data availability.} No data was used for the research described in the article.
    \par {\bf Conflict of interest.} This work does not have any conflicts of interest.


\begin{thebibliography}{99}
\bibitem{Ascher}
U. M. Ascher, P. A. Markowich, P. Pietra, C. Schmeiser, {\it A phase plane analysis of transonic solutions for the hydrodynamic semiconductor model}, Math. Models Methods Appl. Sci., 1(3): 347-376, 1991.
\bibitem{Bae1}
M. Bae, B. Duan, J. J. Xiao, C. J. Xie, {\it Structural stability of supersonic solutions to the Euler-Poisson system}, Arch. Ration. Mech. Anal., 239(2): 679-731, 2021.
\bibitem{Bae2}
M. Bae, B. Duan, C. J. Xie, {\it Subsonic solutions for steady Euler-Poisson system in two-dimensional nozzles}, SIAM J. Math. Anal., 46(5): 3455-3480, 2014.
\bibitem{Bae3}
M. Bae, B. Duan, C. J. Xie, {\it Subsonic flow for the multidimensional Euler-Poisson system}, Arch. Ration. Mech. Anal., 220(1): 155-191, 2016.
\bibitem{Bae4}
M. Bae, B. Duan, C. J. Xie, {\it Two-dimensional subsonic flows with self-gravitation in bounded domain}, Math. Models Methods Appl. Sci., 25(14): 2721-2747, 2015.
\bibitem{Bae8}
M. Bae, B. Duan, C.J. Xie, {\it  Classical solutions to a mixed-type PDE with a Keldysh-type degeneracy and accelerating transonic solutions to the Euler-Poisson system}, arXiv:2505.16354.
\bibitem{Bae6}
M. Bae, H. Park, {\it Three-dimensional supersonic flows of Euler-Poisson system for potential flow}, Commun. Pure Appl. Anal., 20(7-8): 2421-2440, 2021.
\bibitem{Bae7}
M. Bae, Y. Park, {\it Radial transonic shock solutions of Euler-Poisson system in convergent nozzles}, Discrete Contin. Dyn. Syst. Ser. S, 11(5): 773-791, 2018.
\bibitem{Bae5}
M. Bae, S. K. Weng, {\it 3-D axisymmetric subsonic flows with nonzero swirl for the compressible Euler-Poisson system}, Ann. Inst. H. Poincar$\acute{\mathrm{e}}$ C Anal. Non Lin¨¦aire 35(1): 161-186, 2018.
\bibitem{Cao}
Y. Cao, Y. Y. Xing, {\it Subsonic Euler-Poisson flows with self-gravitation in an annulus}, J. Differential Equations, 411: 90-118, 2024.
\bibitem{Chen}
D. P. Chen, R. S. Eisenberg, J. W. Jerome, C.W. Shu, {\em A hydrodynamic model of temperature change in open ionic channels}, Biophys. J., 69: 2304-2322, 1995.
\bibitem{Duan0}
B. Duan, C. P. Wang, Y. Y. Xing, {\it Supersonic Euler-Poisson flows in divergent nozzles}, J. Differential Equations, 371: 598-628, 2023.
\bibitem{Duan}
B. Duan, Z. Luo, C. P. Wang, {\it Structural stability of non-isentropic Euler-Poisson system for gaseous stars}, J. Differential Equations, 417: 105-131, 2025.
\bibitem{Degond1}
P. Degond, P. A. Markowich, {\it On a one-dimensional steady-state hydrodynamic model for semiconductors}, Appl. Math. Lett., 3(3): 25-29, 1990.
\bibitem{Degond2}
P. Degond, P. A. Markowich, {\it A steady state potential flow model for semiconductors}, Ann. Mat. Pura Appl., 165(4): 87-98, 1993.
\bibitem{Gamba}
I.M. Gamba, {\it Stationary transonic solutions of a one-dimensional hydrodynamic model for semiconductors}, Comm. Partial Differential Equations, 17(3-4): 553-577, 1992.
\bibitem{Gilbarg}
D. Gilbarg, N. S. Trudinger, {\it Elliptic partial differential equations of second order}, Grundlehren Math. Wiss., Springer, Berlin, 224, 1998.
\bibitem{Guo}
Y. Guo, W. Strauss, {\it Stability of semiconductor states with insulating and contact boundary conditions}, Arch. Ration. Mech. Anal., 179(1): 1-30, 2006.
\bibitem{Liu}
T. P. Liu, {\it Nonlinear stability and instability of transonic flows through a nozzle}, Comm. Math. Phys., 83(2): 243-260, 1982.
\bibitem{Luo1}
T. Luo, Z. P. Xin, {\it Transonic shock solutions for a system of Euler-Poisson equations}, Commun. Math. Sci., 10(2): 419-462, 2012.
\bibitem{Luo2}
T. Luo, J. Rauch, C. J. Xie, Z. P. Xin, {\it Stability of transonic shock solutions for one-dimensional Euler-Poisson equations}, Arch. Ration. Mech. Anal., 2011, 202(3): 787-827.
\bibitem{Markowich0}
P. A. Markowich, {\it On steady state Euler-Poisson models for semiconductors}, Z. Angew. Math. Phys., 42(3): 389-407, 1991.
\bibitem{Markowich}
P. A. Markowich, C. A. Ringhofer, C. Schmeiser, {\it Semiconductor Equations}, Springer Verlag, Vienna, 1990.
\bibitem{Rosini}
M. D. Rosini, {\it A phase analysis of transonic solutions for the hydrodynamic semiconductor model}, Quart. Appl. Math., 63(2): 251-268, 2005.
\bibitem{WZ25}
 C. P. Wang,  Z. H. Zhang, {\it Structural stability of smooth axisymmetric subsonic spiral flows with self-gravitation in a concentric cylinder}, J. Differential Equations 438:  113356, 2025.
 \bibitem{WZ25-1}
  C. P. Wang,  Z. H. Zhang, {\it Structural stability of cylindrical supersonic solutions to the steady Euler¨CPoisson system}, J. Lond. Math. Soc. (2) 112 (2025), no. 1, Paper No. e70229.
  \bibitem{WZ25-2}
  C. P. Wang,  Z. H. Zhang, {\it Structural stability of supersonic spiral flows with large angular velocity for the Euler-Poisson system}, arXiv:2505.12195.
\bibitem{Weng}
S. K. Weng, {\it A deformation-curl-Poisson decomposition to the three dimensional steady Euler-Poisson system with applications}, J. Differential Equations, 267(11): 6574-6603, 2019.
\bibitem{XZ25}
  Y. Y. Xing,  Z. H. Zhang, {\it
Subsonic Euler-Poisson flows with nonzero vorticity in convergent nozzles}, arXiv:2505.19032.
\bibitem{Yeh}
L. M. Yeh, {\it On a steady state Euler-Poisson model for semiconductors}, Comm. Partial Differential Equations, 21(7-8), 1007-1034, 1996.
\end{thebibliography}
\end{document}